\newtheorem{theorem}{Theorem}[section]
\newtheorem{lemma}[theorem]{Lemma}
\newtheorem{proposition}[theorem]{Proposition}
\newtheorem{corollary}[theorem]{Corollary}
\theoremstyle{remark}
\newtheorem{definition}[theorem]{Definition}
\newtheorem*{remark}{Remark}
\newtheorem*{remarks}{Remarks}
\renewcommand\epsilon\varespilon 
\renewcommand\phi\varphi 
\renewcommand\rho\varrho 
\renewcommand\theta\vartheta 
\renewcommand\subset{\subseteq}
\newcommand\AAA{\mathcal{A}}
\newcommand\AND{\quad\textrm{and}\quad}
\newcommand\Adj{\mathrm{Adj}}
\newcommand\DD{\Delta} 
\newcommand\bP{\mathbf{P}} 
\newcommand\bL{\mathbf{L}} 
\newcommand\ba{\mathbf{a}}  
\newcommand\bb{\mathbf{b}}  
\newcommand\bx{\mathbf{x}}  
\newcommand\by{\mathbf{y}}  
\newcommand\bz{\mathbf{z}}  
\newcommand\bv{\mathbf{v}}  
\newcommand\bw{\mathrm{w}}  
\newcommand\bN{\mathbb{N}}
\newcommand\bR{\mathbb{R}}
\newcommand\bQ{\mathbb{Q}}
\newcommand\bZ{\mathbb{Z}}
\newcommand\byt{\by}  
\newcommand\bzt{\bz}  
\newcommand\bwt{\widetilde{\bw}}
\newcommand\CCC{\mathcal{C}} 
\newcommand\CP{\mathrm{P}} 
\newcommand\CL{\mathrm{L}} 
\newcommand\cont{\mathrm{cont}}  
\newcommand\ee{\varepsilon}
\newcommand\GL{\mathrm{GL}_2}
\newcommand\GrO{\mathcal{O}} 
\newcommand\hCL{\widehat{\mathrm{L}}} 
\newcommand\hlambda{\widehat{\lambda}} 
\newcommand\lambdaL{\hlambda_{\min}}
\newcommand\homega{\widehat{\omega}} 
\newcommand\Id{\mathrm{Id}}
\newcommand\ie{\textsl{i.e. }} 
\newcommand{\indiceGauche}[2]{{\vphantom{#2}}^{#1}\!\!\!#2} 
\newcommand\Mat{\mathrm{Mat}_{2\times2}}
\newcommand\MM{\mathcal{M}}
\newcommand{\norm}[1]{\|#1\|} 
\newcommand{\normI}[1]{\|#1\|_\infty} 
\newcommand\Sp{\textrm{spec}} 
\newcommand{\Span}[1]{\langle\,#1\rangle}
\newcommand\Sturm{{\emph Sturm}} 
\newcommand\Tr{\mathrm{Tr}}
\newcommand{\transpose}[1]{\indiceGauche{t\;\;}{#1}}
\newcommand\UU{\mathrm{U}}
\newcommand\pu{\underline{\psi}} 
\newcommand\po{\overline{\psi}} 
\newcommand\thu{\underline{\theta}} 
\newcommand\tho{\overline{\theta}} 
\newcommand\us{\mathbf{s}} 
\newcommand\usFibo{\mathbf{1}} 
\newcommand\say[1]{`#1'}
\newcommand{\Ack}{{
  \footnotesize

  \textbf{Acknowledgements}: The author is very grateful to Stéphane Fischler and Damien Roy for giving him a lot of feedback on this work.
}}
\title[Exponents of Diophantine approximation in dimension two]{Exponents of Diophantine approximation in dimension two for a general class of numbers}  
\author{Anthony Po\"els}
\address{
   Department of Mathematics\\
   University of Ottawa\\
   150 Louis Pasteur\\
   Ottawa, Ontario K1N 6N5, Canada}
\email{anthony.poels@uottawa.ca}
\subjclass[2020]{Primary 11J13; Secondary 11H06, 11J82}
\thanks{Work of the author partially supported by NSERC}
\keywords{exponents of approximation, parametric geometry of numbers, approximation by algebraic numbers, simultaneous approximation}
\begin{document} 
\maketitle

\begin{abstract}
We study the Diophantine properties of a new class of transcendental real numbers which contains, among others, Roy's extremal numbers, Bugeaud-Laurent Sturmian continued fractions, and more generally the class of Sturmian type numbers. We compute, for each real number $\xi$ of this set, several exponents of Diophantine approximation to the pair $(\xi,\xi^2)$, together with $\omega_2^*(\xi)$ and $\homega_2^*(\xi)$, the so-called ordinary and uniform exponent of approximation to $\xi$ by algebraic numbers of degree $\leq 2$. As an application, we get new information on the set of values taken by $\homega_2^*$ at transcendental numbers, and we give a partial answer to a question of Fischler about his exponent $\beta_0$.
\end{abstract}

\section{Introduction}

Given a real number $\xi$, we are interested in the following six classical exponents of Diophantine approximation: the exponent $\lambda_2(\xi)$ of simultaneous rational approximation to $\xi$ and $\xi^2$, the dual exponent $\omega_2(\xi)$, the exponent $\omega_2^*(\xi)$ of approximation to $\xi$ by algebraic numbers of degree at most $2$, and the corresponding uniform exponents $\hlambda_2(\xi)$, $\homega_2(\xi)$, $\homega_2^*(\xi)$ (the precise definitions are recalled in the next section, see also \cite{bugeaud2015exponents} and \cite{bugeaud2005exponentsSturmian}). By \cite[Theorem 2.3]{bugeaud2005exponentsSturmian}, for almost all real numbers $\xi$ (with respect to the Lebesgue measure), we have
\begin{align}
    \label{eq: valeur générique 6 expos}
    \lambda_2(\xi)=\hlambda_2(\xi)=\frac{1}{2}\AND \omega_2(\xi)=\homega_2(\xi)=\omega_2^*(\xi)=\homega_2^*(\xi) = 2.
\end{align}
Moreover, if $\xi$ is algebraic of degree at least $3$, then \eqref{eq: valeur générique 6 expos} still holds as a consequence of Schmidt's subspace Theorem (see \cite[Theorem 2.4]{bugeaud2005exponentsSturmian}). There are currently few explicit families of transcendental numbers of which all exponents are known. Under the condition $\lambda_2(\xi) \leq 1$ (which excludes Liouville numbers, see \cite[Corollary 5.4]{bugeaud2015exponents}), we have Roy's extremal numbers \cite{roy2004approximation}  and Fibonacci type numbers \cite{roy2007two}, Bugeaud and Laurent Sturmian continued fractions \cite{bugeaud2005exponentsSturmian}, and more generally the class of Sturmian type numbers  \cite{poels2017exponents} which generalizes the two last families. In some cases combinatorics on words provides numbers for which the six exponents can be computed. Given a word $w$ written on the alphabet of positive integers we associate the real number $\xi_w=[0;w]$ whose partial quotients are successively $0$ and the letters of $w$. Some combinatorial properties of $w$ translate into Diophantine properties of $\xi_w$. For example, in \cite{allouche2001transcendence} Allouche, Davison, Queffélec, and Zamboni proved that when $w$ is a Sturmian or quasi-Sturmian sequence (see \cite{lothaire1983combinatorics} and \cite{cassaigne1997sequences} for the definitions), then $\omega_2^*(\xi_w) > 2$, and thus $\xi$ is transcendental. Bugeaud and Laurent \cite{bugeaud2005exponentsSturmian} studied in depth the special case of Sturmian characteristic words. Their work generalizes a previous construction of Roy based on the Fibonacci word \cite{roy2003approximation}. To state their result, let us recall some definitions. Fix an alphabet $\AAA:=\{a,b\}$, where $a,b$ are two distinct positive integers, and let $\AAA^*$ denote the monoid of finite words on $\AAA$ for the concatenation. Given an infinite sequence $\us=(s_k)_{k\geq 1}$ of positive integers or an irrational number $\phi\in(0,1)$ with continued fraction expansion $\phi=[0;s_1,s_2,\dots]$, we define recursively a sequence of finite words $(w_k)_{k\geq 0}$ in $\AAA^*$ by
\begin{align*}
    w_0 = b,\quad w_1=b^{s_1-1}a \AND w_{k+1} = w_k^{s_{k+1}}w_{k-1} \quad (k\geq 1).
\end{align*}
This sequence converges to an infinite word $w_\phi = \lim_{k\rightarrow\infty} w_k$ called the \textsl{Sturmian characteristic word of slope $\phi$ on $\AAA=\{a,b\}$}. These words are important in combinatorics on words, see for examples \cite{lothaire1983combinatorics}, \cite{de1994some}, \cite{de1997sturmian}. We associate to $w_\phi$ the real numbers $\xi_\phi:= \xi_{w_\phi}$ and
\begin{equation}
    \label{eq: def sigma}
    \sigma(\us) := \liminf_{k\rightarrow+\infty}\frac{1}{[s_{k+1};s_k,\dots,s_1]}.
\end{equation}

Note that $\sigma(\us)\leq 1/\gamma$, where $\gamma=[1;1,\cdots] = (1+\sqrt 5)/2$ denotes the golden ratio. Bugeaud and Laurent proved the following result \cite[Theorem 3.1]{bugeaud2005exponentsSturmian}.

\begin{theorem}[Bugeaud-Laurent, 2005]
\label{thm:bugeaud-laurent,sturm}
Let $\phi=[0;s_1,s_2,\dots]\in [0,1]\setminus\bQ$ and let $\sigma=\sigma(\us)$. Then
    \begin{align*}
        \begin{array}{ll}
           \homega_2(\xi_\phi) = \homega_2^*(\xi_\phi) = 2+\sigma, & \displaystyle \quad  \hlambda_2(\xi_\phi) = \frac{1+\sigma}{2+\sigma},\\
           \displaystyle \omega_2(\xi_\phi) = \omega_2^*(\xi_\phi) = \frac{2}{\sigma}+1, & \quad \lambda_2(\xi_\phi)=1.
       \end{array}
    \end{align*}
\end{theorem}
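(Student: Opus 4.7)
The plan is to leverage the recursive self-similar structure of the Sturmian characteristic word $w_\phi$ to produce an explicit ladder of ``minimal points'' in $\bZ^3$ approximating $(1,\xi_\phi,\xi_\phi^2)$, and then to read off the six exponents directly from the asymptotic sizes of consecutive points in this ladder.

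First, I would encode finite words $u\in\AAA^*$ by matrices, sending each letter $t\in\{a,b\}$ to $A_t=\left(\begin{smallmatrix}t & 1\\ 1 & 0\end{smallmatrix}\right)$ and defining $M(u)$ as the product in the prescribed order. The columns of $M(w_k)$ give two consecutive convergents of $\xi_\phi$, say $(q_{N_k-1},p_{N_k-1})$ and $(q_{N_k},p_{N_k})$, and the recursion $w_{k+1}=w_k^{s_{k+1}}w_{k-1}$ translates into $M(w_{k+1})=M(w_k)^{s_{k+1}}M(w_{k-1})$. Setting $u_k:=\log q_{N_k}$, a logarithmic computation yields $u_{k+1}\sim s_{k+1}u_k+u_{k-1}$, so the ratio $u_{k+1}/u_k$ unfolds as the continued fraction $[s_{k+1};s_k,\ldots,s_1]$, making the quantity $\sigma$ of \eqref{eq: def sigma} appear naturally at the liminf.

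The combinatorial heart is the classical palindromicity of $w_\phi$: for $k\geq 2$, the word $w_k$ factors as $P_k\cdot\varepsilon$ with $P_k$ a palindrome and $\varepsilon\in\{ab,ba\}$. On the matrix side this forces $M(w_k)$ to coincide with its transpose up to multiplication by an elementary matrix of bounded norm, which in turn yields an integer $r_k$ with $p_{N_k}^2-q_{N_k}r_k=O(1)$. The triple $(q_{N_k},p_{N_k},r_k)$ is therefore a very small integer point near $(1,\xi_\phi,\xi_\phi^2)$, with both $|q_{N_k}\xi_\phi-p_{N_k}|$ and $|q_{N_k}\xi_\phi^2-r_k|$ of order $q_{N_k}^{-1}$. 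Inserting the intermediate points coming from the subwords $w_k^jw_{k-1}$ for $1\leq j\leq s_{k+1}$ produces the full ladder; taking appropriate limsups and liminfs of the ratios of consecutive heights, which are controlled by the values $[s_{k+1};s_k,\ldots,s_1]$, then yields the values $\lambda_2(\xi_\phi)=1$ and $\hlambda_2(\xi_\phi)=(1+\sigma)/(2+\sigma)$. The dual exponents $\omega_2(\xi_\phi)$ and $\homega_2(\xi_\phi)$ arise from the same analysis applied to the adjugates $M(w_k)^\vee$, whose rows provide small integer linear forms in $(1,\xi_\phi,\xi_\phi^2)$, with sizes and errors transposed to those of the primal ladder.

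For the starred exponents, each small linear form $L(\bx)=cx_2+bx_1+ax_0$ in the dual ladder is identified with the integer binary quadratic polynomial $P(X)=cX^2+bX+a$; Taylor expansion at $\xi_\phi$ exhibits a real root of $P$ close to $\xi_\phi$, and irreducibility of $P$, which follows from the transcendence of $\xi_\phi$ together with the near-symmetry of $M(w_k)^\vee$, ensures this root is a genuine degree-$2$ algebraic approximant, yielding $\omega_2^*(\xi_\phi)=\omega_2(\xi_\phi)$ and $\homega_2^*(\xi_\phi)=\homega_2(\xi_\phi)$. The main difficulty I anticipate is verifying that our ladder tracks the actual successive minima of the one-parameter family of convex bodies around $(1,\xi_\phi,\xi_\phi^2)$: one must show that between two consecutive ladder heights no smaller integer point comes closer to the triple, else the extracted exponents would only be bounds. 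This minimality is a convex-body argument where Minkowski's second theorem is combined one final time with the palindromic structure to exclude exceptional intermediate good points.
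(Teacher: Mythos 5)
The paper does not reprove Bugeaud--Laurent's theorem along these lines at all: it notes that the palindromic prefixes $\pi_i$ of $w_\phi$ give symmetric points $\by_i=\Phi(\pi_i)$ with $\det(\by_i)=\pm1$, hence $\xi_\phi\in\Sturm(\us)$ with $\delta(\xi_\phi)=0$, and then reads all six values off Theorem~\ref{reciproque Thm 3-systeme nombre quasi sturmien intro}. What you sketch is instead the original Bugeaud--Laurent route, which is legitimate in principle, but your outline stops exactly where the real work begins. The ladder of palindrome points only yields \emph{lower} bounds for $\lambda_2$, $\omega_2$, $\omega_2^*$, and yields nothing for the uniform exponents or for the upper bound $\omega_2(\xi_\phi)\le 2/\sigma+1$ until one proves that the ladder realizes the successive minima. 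You flag this and dispose of it with ``Minkowski's second theorem combined with the palindromic structure,'' but Minkowski alone cannot exclude exceptional intermediate points; the missing input is a linear-independence structure (consecutive dual points satisfy $\bz_{t_{k+1}}\wedge\bz_i=\pm\by_i$, so a competitor forces two independent short vectors, and Minkowski plus Mahler duality then pins down all three minima). That argument is the substance of Proposition~\ref{Prop 3-system a o(q)} here, and of the corresponding independence lemmas in Bugeaud--Laurent; without it none of the stated equalities is established.

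Two further steps are wrong or incomplete as written. The dual ladder is not given by ``rows of the adjugates $M(w_k)^\vee$'': those rows are integer pairs and produce linear forms in $(1,\xi_\phi)$ only. The correct dual points are the quadratic polynomials $\UU(\bw_k^{\ell}\bw_{k-1})$ of \eqref{eq intro: U version polynomial}, equivalently the cross products of consecutive symmetric points (Proposition~\ref{Prop nouvelle expression y_i et z_i}); $\UU(\bw_k)$ is small at $\xi_\phi$ because $\xi_\phi$ is close to the fixed point of the homography attached to $\bw_k$, i.e.\ to the quadratic number $[0;\overline{w_k}]$. Finally, for the starred exponents irreducibility is beside the point: to convert $|P(\xi_\phi)|/H(P)$ into $|\xi_\phi-\alpha|$ you need the \emph{second} root of $P$ to stay bounded away from $\xi_\phi$, i.e.\ $|P'(\xi_\phi)|\asymp H(P)$. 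This is precisely what Lemma~\ref{lem: lemme racines} supplies (the polynomials converge projectively to $(X-\xi)(X-\xi')$ with $\xi'\ne\xi$), and your appeal to ``near-symmetry'' does not deliver it.
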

The set of values taken by $2+1/\sigma(\us)$ is called Cassaigne's spectrum (see  \cite[§4]{cassaigne1999limit}). It is a compact subset of $[0,+\infty]$ with empty interior. Let us describe shortly the ideas behind the computation of the exponents of $\xi_\phi$. The theory of continued fractions (see for example \cite[Chapter I]{schmidt1996diophantine}) ensures that the numerator and denominator of the convergents of $\xi_\phi$ are given by the coefficients of the matrices $\Phi(u)$, where $u$ is a prefix of $w_\phi$ and $\Phi:\AAA^*\rightarrow \MM:=\Mat(\bZ)\cap \GL(\bQ)$ is the morphism of monoids defined by
\begin{align}
\label{eq intro: Phi_A,B fractions continues}
    \Phi(a)= \left(\begin{array}{cc} a & 1\\ 1 & 0 \end{array}\right) \AND \Phi(b)= \left(\begin{array}{cc} b & 1\\ 1 & 0 \end{array}\right).
\end{align}
Moreover, when $u$ is a palindrome, the matrix $\Phi(u)$ is symmetric and the mirror formula provides good simultaneous approximations $(p_j/q_j,p_{j-1}/q_j)$ to $(\xi_\phi,\xi_\phi^2)$ (see \cite{adamczewski2007reversals} and \cite{adamczewski2007palindromic} for other results based on this property). Yet, $w_\phi$ has a lot of palindromic prefixes (see \cite[Lemma 5.3]{bugeaud2005exponentsSturmian}). They yield enough explicit simultaneous approximations to $\xi_\phi$, $\xi_\phi^2$ to compute $\lambda_2(\xi_\phi)$ and $\hlambda_2(\xi_\phi)$. To obtain $\omega_2^*(\xi_\phi)$ and $\homega_2^*(\xi_\phi)$, Bugeaud and Laurent consider the quadratic numbers $\alpha_k:=[0;w_kw_k\cdots]$ for $k\geq 1$ (see \cite[§6]{bugeaud2005exponentsSturmian}). They are very good approximations to $\xi_\phi$ since $w_k^{1+s_{k+1}}$ is a common prefix of $w_\phi$ and $w_kw_k\cdots$ (see \cite[Lemma~5.2]{bugeaud2005exponentsSturmian}). Finally, to get the last pair $\omega_2(\xi_\phi)$, $\homega_2(\xi_\phi)$, they use the polynomials $P_k$ defined below. They notice that $1/\alpha_k$ is the fixed point of the homography (fractional linear transformation) associated to the matrix $\bw_k:= \Phi(w_k)$. In particular, setting
\begin{align}
\label{eq intro: U version polynomial}
    \UU(\bw) := -c + (a-d)X+bX^2 \quad \textrm{for each } \bw = \left(\begin{array}{cc}  a & b \\ c & d \end{array}\right)\in \Mat(\bR),
\end{align}
and defining $P_k:=U(\bw_k)$, we have $P_k(\alpha_k) = 0$, and $P_k(\xi_\phi)$ tends to $0$ very ``quickly''.

\medskip

In a preceding paper \cite{poels2017exponents}, we consider instead a general morphism $\Phi:\AAA^*\rightarrow \MM$ where $A:=\Phi(a)$ and $B:=\Phi(b)$ are any matrices of $\MM$ such that $\det(AB-BA)\neq 0$ and that the content of $\bw_k:=\Phi(w_k)$ is bounded for $k\geq 0$. Surprisingly, it is still possible to built a sequence of symmetric matrices $(\by_i)_{i\geq 0}$ from $(\bw_k)_{k\geq 0}$, which plays the exact same role as the one Laurent and Bugeaud construct from the palindromic prefixes of $w_\phi$ (even though $A$ and $B$ are not necessarily symmetric themselves). Under some technical conditions on the growth of $(\bw_k)_{k\geq 0}$ and $(\det(\bw_k))_{k\geq 0}$, we prove that $(\by_i)_{i\geq 0}$ converges projectively to a symmetric matrix $\left(\begin{array}{cc} 1 & \xi\\ \xi & \xi^2 \end{array}\right)$. In \cite{poels2017exponents} we further give explicit formulas for the four exponents $\lambda_2$, $\hlambda_2$, $\omega_2$ and $\homega_2$ associated to $\xi$ when $\us$ is bounded. They generalize the formulas of Theorem~\ref{thm:bugeaud-laurent,sturm} with the introduction of a second parameter $\delta$ associated to the growth of $|\det(\bw_k)|$ in addition to $\sigma$, see Theorem~\ref{reciproque Thm 3-systeme nombre quasi sturmien intro}. Indeed, compared to the morphism $\Phi$ defined by \eqref{eq intro: Phi_A,B fractions continues}, which yields $\det(\bw_k)=\pm 1$ for each $k$, this new construction provides a larger set of singular points by allowing $|\det(\bw_k)|$ to diverge, as Roy did for the Fibonacci word in \cite{roy2007two}. 

\medskip

In this paper, we complete and extend the results of \cite{poels2017exponents} by considering a general sequence $\us$ (not necessarily bounded) and by relaxing the condition that the content of the matrices $\bw_k=\Phi(w_k)$ is bounded, which brings in the delicate question of controlling it. Moreover, we also compute the exponents $\omega_2^*$ and $\homega_2^*$ that were left out in our previous study. To do this, we use the following surprising phenomenon. For each $k\geq 0$, write $P_k:=\UU(\bw_k)$ and denote by $\alpha_k$ the root of $P_k$ closest to $\xi$. Then $P_k(\xi)$ tends to $0$ as $k$ tends to infinity, and $(\alpha_k)_{k\geq 0}$ is the sequence of best quadratic approximations to $\xi$, exactly as in the continued fraction case (although we are working with two general matrices $A$ and $B$). For each matrix $\bw\in\Mat(\bR)$ (resp. polynomial $P$) we denote by $\norm{\bw}$ (resp. $H(P)$) the largest absolute value of its coefficients.
\begin{definition}
    \label{Def fonction sturmienne, version polynomiale}
    Let $\us = (s_k)_{k\geq 1}$ be a sequence of positive integers and write $\sigma:=\sigma(\us)$ as in \eqref{eq: def sigma}. The set $\Sturm(\us)$ consists of real numbers $\xi$ which are neither rational nor quadratic, such that there exists a sequence of matrices $(\bw_k)_{k\geq 0}$ in $\MM:=\Mat(\bZ)\cap \GL(\bQ)$ with the following properties. For each $k\geq 0$, we denote by $c_k$ the content of $\bw_k$ (defined as the greatest common divisor of its coefficients) and we write $\bwt_k:=c_k^{-1}\bw_k$ and $P_k:=U(\bwt_k)$. Then
    \begin{enumerate}[label=\rm(\roman*)]
        \item $\bw_{k+1}=\bw_k^{s_{k+1}}\bw_{k-1}$ for each $k\geq 1$ and $\det(\bw_0\bw_1-\bw_1\bw_0)\neq 0$.
            \smallskip
        \item There exists $c>0$ such that $\norm{\bw_k^{\ell+1}\bw_{k-1}} \geq c \norm{\bw_k}\norm{\bw_k^\ell\bw_{k-1}}$ for each $k,\ell$ with $k\geq 1$ and $0\leq \ell \leq s_{k+1}$.
            \smallskip
        \item \label{item: Def sturm poly condition 3} $P_k(\xi)/H(P_k)$ tends to $0$ as $k$ tends to infinity.
            \smallskip
        \item \label{item: Def sturm poly condition 4}  $|\det(\bwt_k)| \leq \norm{\bwt_k}^{\sigma/(1+\sigma)+o(1)}$ as $k$ tends to infinity.
    \end{enumerate}
    We define the set $\Sturm$ by
    \[
        \Sturm := \bigcup_{\us}\Sturm(\us).
    \]
\end{definition}

According to our main result below each $\xi\in\Sturm(\us)$ satisfies $\lambda_2(\xi)\geq 1/(1+\sigma) > 1/2$, and thus is transcendental (see \cite[Theorem 2.10]{bugeaud2015exponents}). As we will see $\Sturm(\us)$ is countably infinite (see the remarks after Definition~\ref{def: Sturm(psi)}). It also contains the real numbers $\xi_\phi$ associated to $\us$, and Theorem~\ref{thm:bugeaud-laurent,sturm} follows as a special case of our main result below, taking for granted that $\delta(\xi_\phi)=0$.

\begin{theorem}
\label{reciproque Thm 3-systeme nombre quasi sturmien intro}
Let $\us$ be a sequence of positive integer and set $\sigma:=\sigma(\us)$. There is a function
\[
    \delta:\Sturm(\us)\rightarrow [0,\sigma/(1+\sigma)],
\]
whose image $\Delta(\us)$ is a dense subset of $[0,\sigma/(1+\sigma)]$, with the following property. For each $\xi\in\Sturm(\us)$,
writing $\delta:=\delta(\xi)$, we have
\begin{align*}
    \begin{array}{ll}
       \homega_2(\xi) = \homega_2^*(\xi) = 1+(1-\delta)(1+\sigma),
       & \displaystyle \quad  \hlambda_2(\xi) = \frac{(1-\delta)(1+\sigma)}{1+(1-\delta)(1+\sigma)},\\
        \displaystyle \omega_2(\xi) = \omega_2^*(\xi) = \frac{2-\delta}{\sigma}+1-\delta,
       & \displaystyle\quad 1-\delta\leq\lambda_2(\xi)\leq\max\Big(1-\delta,\frac{1}{1-\delta+\sigma}\Big).
   \end{array}
\end{align*}
If moreover $\delta$ satisfies the stronger condition $\delta < h(\sigma)$, where
$h(\sigma) = \sigma/2+1-\sqrt{(\sigma/2)^2+1}$, then
\[
    \lambda_2(\xi) = 1-\delta\AND \lambdaL(\xi) = \frac{(1-\delta)(1+\sigma)}{2+\sigma}.
\]
The formula for $\lambda_2(\xi)$ still holds if $\delta = h(\sigma)$.
\end{theorem}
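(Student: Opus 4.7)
My strategy is to recast the six exponents as asymptotic invariants of the three successive minima of the parametric family of convex bodies attached to $(1,\xi,\xi^2)\in\bR^3$, and to determine this family from the matrix data of Definition~\ref{Def fonction sturmienne, version polynomiale}. In the Schmidt--Summerer framework, $\omega_2(\xi)$ and $\homega_2(\xi)$ are read off as $\limsup$ and $\liminf$ of $-\lambda_1^*(q)/q$, while $\lambda_2(\xi)$ and $\hlambda_2(\xi)$ correspond to $\limsup$ and $\liminf$ of $-\lambda_3^*(q)/q$, where $\lambda_1^*(q)\le\lambda_2^*(q)\le\lambda_3^*(q)$ are the logarithms of the successive minima at level $q$. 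The starred exponents $\omega_2^*,\homega_2^*$ will then be related to $\omega_2,\homega_2$ through a quantitative analysis of the roots of the polynomials $P_k=\UU(\bwt_k)$.

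\textbf{Construction of the geometry.} Following the palindromic-like procedure of \cite{poels2017exponents}, I would combine the $\bw_k$ into a sequence of symmetric integer matrices $(\by_i)_{i\ge 0}$ converging projectively to $\bigl(\begin{smallmatrix} 1 & \xi\\ \xi & \xi^2\end{smallmatrix}\bigr)$, producing integer triples $(q_i,p_i,p_i')$ that simultaneously approximate $(\xi,\xi^2)$. Dually, hypothesis \ref{item: Def sturm poly condition 3} on $P_k=\UU(\bwt_k)$ furnishes integer triples $(a_k,b_k,c_k)$ with $a_k+b_k\xi+c_k\xi^2$ very small. Conditions (i), (ii) and \ref{item: Def sturm poly condition 4} are then used to estimate $\norm{\by_i}$, $|\!\det \by_i|$, $H(P_k)$ and $|P_k(\xi)|$ in terms of $\norm{\bw_k}$, $\det(\bw_k)$ and $s_{k+1}$; in the process, one isolates a well-defined parameter $\delta=\delta(\xi)\in[0,\sigma/(1+\sigma)]$ such that $|\!\det(\bwt_k)|=\norm{\bwt_k}^{\delta+o(1)}$.

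\textbf{Extraction of the exponents.} The recurrence $\bw_{k+1}=\bw_k^{s_{k+1}}\bw_{k-1}$ combined with (ii) prescribes the behaviour of $\lambda_1^*,\lambda_2^*,\lambda_3^*$ in each ``transition interval'' between two successive scales, yielding a three-system parametrized by $\sigma$ and $\delta$ that extends the bounded regime of \cite{poels2017exponents}. Solving the associated piecewise-linear relations gives the closed forms for $\homega_2(\xi),\omega_2(\xi),\hlambda_2(\xi)$ together with the two-sided estimate for $\lambda_2(\xi)$; when $\delta\le h(\sigma)$, one of the two competing upper bounds on the third minimum becomes systematically attained, pinning down $\lambda_2(\xi)=1-\delta$ and, under strict inequality, $\hlambda_2(\xi)=(1-\delta)(1+\sigma)/(2+\sigma)$. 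For the starred exponents, Taylor expansion of $P_k$ around its nearest root $\alpha_k$ gives $|\xi-\alpha_k|\asymp |P_k(\xi)|/|P_k'(\alpha_k)|$ with $|P_k'(\alpha_k)|\asymp H(P_k)$ (the latter using that $\xi$ is not quadratic, so the second root of $P_k$ stays bounded away from $\alpha_k$); combined with Wirsing-type lemmas showing that any quadratic $Q\in\bZ[X]$ of small height with a root close to $\xi$ embeds into the sequence $(P_k)$ up to bounded factors, this yields $\omega_2^*(\xi)=\omega_2(\xi)$ and $\homega_2^*(\xi)=\homega_2(\xi)$.

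\textbf{Density and main obstacle.} Density of $\Delta(\us)$ in $[0,\sigma/(1+\sigma)]$ would be obtained by exhibiting, for each target value of $\delta$, explicit morphisms $\Phi:\AAA^*\to\MM$ whose matrices $\bw_k=\Phi(w_k)$ satisfy (i)--(iv) with the prescribed $\delta$, generalizing the scaling trick used by Roy for the Fibonacci word in \cite{roy2007two}. The hardest step is the construction of the geometry: in \cite{poels2017exponents} the boundedness of $\us$ and of the contents $c_k$ reduced the analysis to finitely many product patterns, whereas here condition (ii) must be leveraged uniformly in $\ell\in[0,s_{k+1}]$ with possibly unbounded $s_{k+1}$, and the contents $c_k$ must be tracked along the recurrence by a gcd analysis compatible with \ref{item: Def sturm poly condition 4}.
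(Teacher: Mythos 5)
Your overall strategy coincides with the paper's: both pass through Schmidt--Summerer parametric geometry of numbers, build the symmetric sequence $(\by_i)$ from the recurrence $\bw_{k+1}=\bw_k^{s_{k+1}}\bw_{k-1}$, use the polynomials $\UU(\bw_k^\ell\bw_{k-1})$ on the dual side, isolate $\delta$ from the growth of $|\det(\bw_k)|$, single out the content estimates and the unbounded-$s_k$ regime as the delicate points, and get density of $\Delta(\us)$ from the explicit constructions of \cite{poels2017exponents}. Your treatment of $\omega_2^*$ and $\homega_2^*$ is also the paper's: the second root of $P_k$ stays at distance $\asymp 1$ from $\xi$, so $|\xi-\alpha_k|\asymp |P_k(\xi)|/H(P_k)$, and the reverse inequalities $\omega_2^*\le\omega_2$, $\homega_2^*\le\homega_2$ are the general ones; the extra ``Wirsing-type embedding'' of arbitrary small-height quadratics into $(P_k)$ is not needed for either direction.

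There is, however, one genuine gap. In the conditional part ($\delta<h(\sigma)$) you claim the argument ``pins down $\hlambda_2(\xi)=(1-\delta)(1+\sigma)/(2+\sigma)$''. This cannot be right: the theorem gives the unconditional formula $\hlambda_2(\xi)=(1-\delta)(1+\sigma)/(1+(1-\delta)(1+\sigma))$, which equals $(1-\delta)(1+\sigma)/(2+\sigma)$ only when $\delta=0$. The value $(1-\delta)(1+\sigma)/(2+\sigma)$ is that of the distinct exponent $\lambdaL(\xi)=\lim_{\mu\rightarrow\lambda_2(\xi)^-}\hlambda_\mu(\Xi)$, which your plan never addresses. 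Its computation is not a formal consequence of the three-system describing the successive minima: one must show that when $1/(2-\delta+\sigma)<(1-\delta)/(2-\delta)$ (i.e.\ $\delta<h(\sigma)$), every primitive $\by\in\bZ^3$ with $\norm{\by\wedge\Xi}\le\norm{\by}^{-\mu}$, for $\mu$ close to $\lambda_2(\xi)$, coincides up to finitely many exceptions with one of the points $\byt_i$ --- this uses that the relevant abscissa avoids the intervals $I_i'$, and that a second linearly independent solution would force $\CL_2(q)\ge\psi q$ with $\psi>\po_2=1/(2+\sigma)$ --- and then evaluate $\liminf_i\bigl(-\log\norm{\byt_i\wedge\Xi}/\log\norm{\byt_{i+1}}\bigr)$, whose minimum over $\ell$ is attained at $\ell=0$. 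Without this identification of the $\mu$-best approximations, the claimed formula for $\lambdaL$ (on which Theorem \ref{Thm beta_0 < sqrt} relies) remains unproven.
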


The exponent $\lambdaL(\xi)$ above is defined in \cite{poels2019newExpo} and related to work of Fischler \cite{fischler2007palindromic}. We recall its definition in the next section.

\medskip

Recall that the spectrum of an exponent $\nu$ is the set of its values $\Sp(\nu) := \nu(\bR\setminus\overline{\bQ})$. The spectrum of $\omega_2$, resp. $\omega_2^*$, is equal to $[2,+\infty]$ by a result of Bernik \cite{bernik1983use}, resp. Baker and Schmidt \cite{baker1970diophantine}. We also have $\Sp(\lambda_2)=[1/2,+\infty]$ by \cite{beresnevich2007diophantine} and \cite{vaughan2006diophantine}. However, the spectrum of the uniform exponents is more mysterious and complicated. Let $\usFibo=(s_k)_{k\geq 1}$ denote the constant sequence $s_k=1$ for each $k\geq 1$. Its associated Sturmian characteristic word is the Fibonacci word. The set $\Sturm(\usFibo)$ is of particular interest, since it contains Roy's extremal numbers \cite{roy2004approximation} and Fibonacci type numbers \cite{roy2007two}. We have $\sigma(\usFibo) = 1/\gamma$, where $\gamma=(1+\sqrt 5)/2$ is the golden ratio, and the set $\Delta(\usFibo)$ is dense in $[0,1/\gamma^2]$. From this we recover the result of Roy according to which the spectrum of $\hlambda_2$ and that of $\homega_2$ are dense in $[1/2,1/\gamma]$ and in $[2,\gamma^2]$ respectively \cite{roy2007two}. Our first corollary follows by applying Theorem~\ref{reciproque Thm 3-systeme nombre quasi sturmien intro} to the sequence $\usFibo$.

\begin{corollary}
    \label{cor: spec homega_2^* dense}
    The spectrum of $\homega_2^*$ contains a dense subset of the interval $[2,\gamma^2]$.
\end{corollary}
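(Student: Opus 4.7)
The plan is to apply Theorem~\ref{reciproque Thm 3-systeme nombre quasi sturmien intro} directly to the Fibonacci sequence $\usFibo=(1,1,1,\dots)$. First I would compute $\sigma(\usFibo)$: by definition \eqref{eq: def sigma}, each partial quotient $[s_{k+1};s_k,\dots,s_1]$ is a convergent-style continued fraction with all entries equal to $1$, which tends to $\gamma=(1+\sqrt 5)/2$, so $\sigma(\usFibo)=1/\gamma$. Consequently the upper bound for $\delta$ becomes
\[
    \frac{\sigma(\usFibo)}{1+\sigma(\usFibo)} = \frac{1/\gamma}{1+1/\gamma} = \frac{1}{\gamma^2},
\]
using the identity $\gamma = 1+1/\gamma$ (equivalently $\gamma^2=\gamma+1$).

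Next I would invoke the statement of the theorem, which guarantees that the image $\Delta(\usFibo)$ of the map $\delta:\Sturm(\usFibo)\to [0,1/\gamma^2]$ is dense in $[0,1/\gamma^2]$, and that for each $\xi\in\Sturm(\usFibo)$ with $\delta=\delta(\xi)$,
\[
    \homega_2^*(\xi) = 1+(1-\delta)(1+\sigma(\usFibo)) = 1+(1-\delta)\gamma,
\]
again using $1+1/\gamma=\gamma$.

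The function $F:\delta\mapsto 1+(1-\delta)\gamma$ is a continuous, strictly decreasing affine bijection from $[0,1/\gamma^2]$ onto the interval with endpoints $F(0)=1+\gamma=\gamma^2$ and
\[
    F(1/\gamma^2) = 1+\gamma - 1/\gamma = 1+1 = 2,
\]
where I have used $\gamma-1/\gamma=1$ (equivalent to $\gamma^2-\gamma=1$). Hence $F$ maps $[0,1/\gamma^2]$ onto $[2,\gamma^2]$, and sends any dense subset of its domain to a dense subset of its codomain.

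Applying $F$ to the dense subset $\Delta(\usFibo)\subset[0,1/\gamma^2]$ therefore produces a dense subset of $[2,\gamma^2]$, and by the formula above this set is contained in $\{\homega_2^*(\xi):\xi\in\Sturm(\usFibo)\}\subset \Sp(\homega_2^*)$, which proves the corollary. There is no real obstacle here: the entire argument is a direct specialization of Theorem~\ref{reciproque Thm 3-systeme nombre quasi sturmien intro} combined with two elementary identities satisfied by the golden ratio.
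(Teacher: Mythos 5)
Your proposal is correct and coincides with the paper's own (implicit) argument: the corollary is stated as a direct specialization of Theorem~\ref{reciproque Thm 3-systeme nombre quasi sturmien intro} to $\usFibo$, using $\sigma(\usFibo)=1/\gamma$, the density of $\Delta(\usFibo)$ in $[0,1/\gamma^2]$, and the affine formula $\homega_2^*(\xi)=1+(1-\delta)\gamma$ mapping that interval onto $[2,\gamma^2]$. The golden-ratio identities you use are exactly the ones needed, so there is nothing to add.
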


Theorem~\ref{thm:bugeaud-laurent,sturm} gives $2+\sigma(\us)\in \Sp(\homega_2^*)$ for each $\us$. Nonetheless, the set of values taken by $2+\sigma(\us)$ is a compact subset of $[2,\gamma^2]$ with empty interior, and thus far from being dense (see the paper of Cassaigne \cite{cassaigne1999limit}). Also note that a theorem of Bugeaud (see \cite{bugeaud2010simultaneousReal} and \cite[Theorem 5.6]{bugeaud2015exponents}) shows that the full interval $[1,3/2]$ is contained in the spectrum of $\homega_2^*$.

\medskip

Although it is possible to have $\omega_2^*(\xi) < \omega_2(\xi)$ (see \cite[Theorem 5.7]{bugeaud2015exponents} and \cite{bugeaud2012continued} for explicit examples), our next corollary, proven in Section~\ref{subsection: proofs des thm}, shows that it does not happen if $\homega_2(\xi)$ is sufficiently close to its maximal value $\gamma^2$.

\begin{corollary}
    \label{cor: spec homega_2^* dense II}
    There exists $\ee > 0$ with the following property. Let $\xi$ be a real number which is neither rational nor quadratic. If $\homega_2(\xi) > \gamma^2-\ee$, then $\xi\in\Sturm(\usFibo)$. In particular
    \[
        \homega_2^*(\xi)=\homega_2(\xi)\AND \omega_2^*(\xi)=\omega_2(\xi),
    \]
    and the set $\Sp(\homega_2^*)\cap[\gamma^2-\ee,\gamma^2]$ is countably infinite.
\end{corollary}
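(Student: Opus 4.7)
The plan is to show that $\homega_2(\xi) > \gamma^2 - \ee$ forces the existence of a matrix sequence $(\bw_k)_{k \geq 0}$ witnessing $\xi \in \Sturm(\usFibo)$. First, using the parametric geometry of numbers \`a la Schmidt--Summerer together with a stability analysis of the author's earlier classification of $3$-systems from \cite{poels2017exponents}, I would construct from the successive minima of $\xi$ a canonical sequence of primitive integer matrices $(\bw_k)_{k \geq 0}$ in $\MM$ satisfying the conditions of Definition~\ref{Def fonction sturmienne, version polynomiale} for some sequence $\us = (s_k)_{k \geq 1}$ of positive integers. This is the core of the proof: extracting a Sturmian-type algebraic structure from the sole analytic datum that $\homega_2(\xi)$ is close to its maximum $\gamma^2$. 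I expect this to be the main obstacle, as it amounts to showing that any $3$-system whose combined graph is close to the maximal value of $\homega_2$ must be combinatorially close to the Fibonacci pattern; in particular, the recurrence $\bw_{k+1} = \bw_k^{s_{k+1}} \bw_{k-1}$ should emerge from reading off the sequence of ``switches'' between consecutive minima.

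Having $\xi \in \Sturm(\us)$ for some $\us$, Theorem~\ref{reciproque Thm 3-systeme nombre quasi sturmien intro} applies and gives $\homega_2(\xi) = 1 + (1-\delta)(1+\sigma(\us))$ with $\delta = \delta(\xi) \in [0, \sigma(\us)/(1+\sigma(\us))]$. The next step is a short continued-fraction observation: if $s_j \geq 2$ for infinitely many $j \geq 1$, then $[s_{k+1}; s_k, \dots, s_1] \geq 2$ at $k = j-1$, hence $\sigma(\us) \leq 1/2$ and $\homega_2(\xi) \leq 1 + (1 + 1/2) = 5/2 < \gamma^2$. Choosing $\ee < \gamma^2 - 5/2 = (\sqrt{5}-2)/2$ rules this out, so $s_j = 1$ for all $j$ sufficiently large, say $j \geq k_0$. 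Setting $\bw_k' := \bw_{k+k_0-1}$ preserves the recurrence (only shifting indices) and yields the Fibonacci recursion $\bw_{k+1}' = \bw_k' \bw_{k-1}'$ for all $k \geq 1$; the non-degeneracy condition $\det(\bw_0'\bw_1' - \bw_1'\bw_0') \neq 0$ is inherited from the original sequence (two consecutive matrices in a Fibonacci-type sequence always generate the same $\bQ$-span as the original pair), and conditions (ii)--(iv) of Definition~\ref{Def fonction sturmienne, version polynomiale} are invariant under this shift. Therefore $\xi \in \Sturm(\usFibo)$.

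Once $\xi \in \Sturm(\usFibo)$ is established, the equalities $\homega_2^*(\xi) = \homega_2(\xi)$ and $\omega_2^*(\xi) = \omega_2(\xi)$ are immediate from Theorem~\ref{reciproque Thm 3-systeme nombre quasi sturmien intro}. For the final assertion, $\Sturm(\usFibo)$ is countable (as noted after Definition~\ref{def: Sturm(psi)}), so $\Sp(\homega_2^*) \cap [\gamma^2-\ee, \gamma^2]$ is at most countable. Conversely, the density of $\Delta(\usFibo)$ in $[0, 1/\gamma^2]$, combined with the formula $\homega_2^*(\xi) = 1 + (1-\delta(\xi))\gamma$ valid for $\xi \in \Sturm(\usFibo)$ (from Theorem~\ref{reciproque Thm 3-systeme nombre quasi sturmien intro} with $\sigma = 1/\gamma$), provides infinitely many distinct values of $\homega_2^*$ in the interval $[\gamma^2-\ee, \gamma^2]$, giving the ``countably infinite'' conclusion.
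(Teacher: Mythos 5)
The decisive step of your argument --- extracting from the single analytic hypothesis $\homega_2(\xi)>\gamma^2-\ee$ a sequence of integer matrices satisfying the recurrence $\bw_{k+1}=\bw_k^{s_{k+1}}\bw_{k-1}$ --- is precisely the part you leave as a plan (``I would construct\dots this is the main obstacle''), and the mechanism you propose for it would not suffice. Parametric geometry of numbers and the $3$-system analysis of \cite{poels2017exponents} control only the norms $\norm{\bv_i}$ and $\norm{\bv_i\wedge\Xi}$ of the minimal points and the combinatorics of their trajectories; reading off the ``switches'' between consecutive minima tells you nothing about the multiplicative relations between the integer points themselves. What Definition~\ref{def: Sturm(psi)} requires is the algebraic recursion $\bv_{i+1}\propto\bv_i\Adj(\bv_{i-2})\bv_i$, and producing it from a near-extremal value of the uniform exponent is a genuinely hard theorem exploiting the quadratic form $\det(\bx)=x_0x_2-x_1^2$ and the symmetry of the points. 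The paper does not reprove it: it invokes \cite[Theorem~7.3]{poelsroy2019} (going back to an unpublished observation of Fischler), which asserts that for every $\eta\in(0,1)$ there is $\ee'>0$ such that $\hlambda_2(\xi)\geq 1/\gamma-\ee'$ forces the existence of primitive points with $\bv_{i+1}\propto\bv_i\Adj(\bv_{i-2})\bv_i$ for large $i$ and $\norm{\bv_i\wedge\Xi}\ll\norm{\bv_i}^{-1+\eta}$. The hypothesis on $\homega_2$ is converted into one on $\hlambda_2$ via Jarn\'ik's identity \eqref{Eq Jarnik}, and the classical bound $|\det(\bv_i)|\ll\norm{\bv_i}\norm{\bv_i\wedge\Xi}$ with $\eta\leq 1/\gamma^2=\sigma/(1+\sigma)$ then yields condition~(iii) of Definition~\ref{def: Sturm(psi)}. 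Without this external input (or a full proof of it), your argument does not get off the ground.

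Note also that the paper's route lands directly in $\Sturm(\usFibo)$, since the recursion already comes with $\psi(i)=i-2$; your intermediate detour through a general $\us$ followed by the ``eventually all ones'' reduction is therefore unnecessary, although that reduction is arithmetically sound ($\sigma(\us)\leq 1/2$ would force $\homega_2(\xi)\leq 5/2<\gamma^2$, and the asymptotic nature of the conditions makes them shift-invariant). Your treatment of the remaining assertions --- the equalities of exponents from Theorem~\ref{reciproque Thm 3-systeme nombre quasi sturmien intro}, the countability of $\Sturm(\usFibo)$, and the density of $\Delta(\usFibo)$ giving infinitely many values $1+(1-\delta)\gamma$ in the interval --- matches the paper and is correct.
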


Applying Theorem~\ref{reciproque Thm 3-systeme nombre quasi sturmien intro} to the sequence $\usFibo$ we also deduce new information on $\Sp(\lambdaL)$.

\begin{corollary}
    The spectrum of $\lambdaL$ contains a dense subset of the interval $[\kappa,1/\gamma]$, where
    \[
        \kappa := \frac{1-h(1/\gamma)}{\gamma} = 0.4558\cdots
    \]
\end{corollary}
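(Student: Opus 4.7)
The plan is to apply Theorem~\ref{reciproque Thm 3-systeme nombre quasi sturmien intro} to the Fibonacci sequence $\usFibo=(1,1,1,\dots)$, for which the text has already recorded that $\sigma:=\sigma(\usFibo)=1/\gamma$ and that $\Delta(\usFibo)$ is dense in $[0,1/\gamma^2]$. The identity $\gamma^2=\gamma+1$ yields $1+\sigma=\gamma$ and $2+\sigma=\gamma^2$. Hence for every $\xi\in\Sturm(\usFibo)$ whose parameter $\delta=\delta(\xi)$ satisfies $\delta\leq h(1/\gamma)$, the formula for $\lambdaL$ in the theorem simplifies to
\[
    \lambdaL(\xi) \;=\; \frac{(1-\delta)(1+\sigma)}{2+\sigma} \;=\; \frac{(1-\delta)\gamma}{\gamma^2} \;=\; \frac{1-\delta}{\gamma}.
\]

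Next, I would verify the numerical inequality $h(1/\gamma)<1/\gamma^2$. A direct computation gives $h(1/\gamma)=1/(2\gamma)+1-\sqrt{1/(2\gamma)^2+1}\approx 0.262$, which is indeed smaller than $1/\gamma^2\approx 0.382$. Because $\Delta(\usFibo)$ is dense in $[0,1/\gamma^2]$ and $[0,h(1/\gamma)]$ is a closed subinterval of the latter with nonempty interior, the intersection $\Delta(\usFibo)\cap[0,h(1/\gamma)]$ is itself dense in $[0,h(1/\gamma)]$. The affine map $\delta\mapsto(1-\delta)/\gamma$ is a continuous decreasing bijection from $[0,h(1/\gamma)]$ onto $[\kappa,1/\gamma]$, so it sends the above dense subset to a dense subset of $[\kappa,1/\gamma]$. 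By construction every point in that image equals $\lambdaL(\xi)$ for some transcendental $\xi\in\Sturm(\usFibo)$, hence belongs to $\Sp(\lambdaL)$, which is exactly what we needed.

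No substantial obstacle arises here: the entire argument is a transfer along the formulas of Theorem~\ref{reciproque Thm 3-systeme nombre quasi sturmien intro}. The only thing one must take care of is checking the small analytic inequality $h(1/\gamma)<1/\gamma^2$ which guarantees that restricting to the domain $\delta\leq h(\sigma)$ (where the exact value of $\lambdaL(\xi)$ is available) still leaves a dense piece of $\Delta(\usFibo)$; all the genuinely difficult content has already been absorbed into the theorem and into the known density statement for $\Delta(\usFibo)$.
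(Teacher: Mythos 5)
Your proof is correct and is essentially the argument the paper intends: the corollary is stated as a direct application of Theorem~\ref{reciproque Thm 3-systeme nombre quasi sturmien intro} to $\usFibo$, using $\sigma(\usFibo)=1/\gamma$, the density of $\Delta(\usFibo)$ in $[0,1/\gamma^2]$, the simplification $(1-\delta)(1+\sigma)/(2+\sigma)=(1-\delta)/\gamma$, and the check $h(1/\gamma)<1/\gamma^2$. The only nitpick is that the theorem guarantees the $\lambdaL$ formula only for $\delta<h(\sigma)$ (at $\delta=h(\sigma)$ it only asserts the $\lambda_2$ formula), but restricting to the open condition does not affect your density conclusion.
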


In this paper, we will use an equivalent definition of the set $\Sturm(\us)$ (see Section~\ref{subsection: proofs des thm}), which connects to a class of numbers considered by Fischler in \cite{fischler2007palindromic}. Before stating it, let us go back to the combinatorial properties of the Sturmian characteristic word $w_\phi$. If $u,v,w\in\AAA^*$ satisfy $u=vw$, we write $v^{-1}u = w$. If $u$ has length at least $2$, then we denote by $u'$ the word $u$ deprived of its two last letters. By \cite[Lemma 5.3]{bugeaud2005exponentsSturmian}, the sequence $(\pi_i)_{i\geq 0}$ of palindromic prefixes of $m_\phi$ (ordered by increasing length) consists of $b,\dots,b^{s_1-1}$ and the words
\begin{align*}
    (w_k^{\ell+1}w_{k-1})' \quad \textrm{with $k\geq 1$ and $0\leq \ell < s_{k+1}$.}
\end{align*}
Moreover, there exists a function $\psi:=\psi_{\us}$ defined over $\bN$ (with $\psi(i) < i$ for each $i\in\bN$) such that
\begin{align}
    \label{eq intro: rec functio psi pour mots}
    \pi_{i+1} = \pi_i (\pi_{\psi(i)}^{-1} \pi_i)
\end{align}
for each large enough $i$ (see \cite[§3]{fischler2006palindromic}). The precise definition of $\psi_\us$ is given in the next section (see Definition~\ref{Def fonction sturmienne}).
In the Fibonacci case where $\us=\usFibo$, the associated function $\psi$ satisfies $\psi(i)=i-2$ for each $i$. In \cite{fischler2007palindromic}, Fischler studied real numbers $\xi_w$ associated to words $w$ with a large density of palindromic prefixes (also see \cite{fischler2006palindromic}). He introduced a new Diophantine exponent $\beta_0$ (whose definition is recalled in the next section) which is closely related to $\hlambda_2$, and was able to compute $\beta_0(\xi_w)$ and to give a complete description of the set $\beta_0(\bR\setminus\overline{\bQ}) \cap (1,2)$. Our primary motivation to introduce the new class of numbers $\Sturm(\us)$ comes from the following result (a combination of \cite[Theorem~4.1]{fischler2007palindromic} with \cite[Lemma~7.1]{fischler2006palindromic}), where we identify $\bR^3$ with the space of matrices $\Mat(\bR)$ under the map
\begin{equation}
\label{eq: identification R^3 avec Mat(R)}
    (x_0,x_1,x_2) \longmapsto \left( \begin{array}{cc} x_0 & x_1 \\ x_1 & x_2\end{array}\right),
\end{equation}
and we denote by $\Adj(\bw)$ the adjoint of a matrix $\bw\in\Mat(\bR)$.

\begin{theorem}[Fischler, 2007]
\label{Rappels Thm 4.1 SF VO}
Let $\xi$ be a real number with $\beta_0(\xi) < 2$, which is neither rational nor quadratic. Then, there exists a sequence $(\bv_i)_{i\geq 0}$ of non-zero primitive points in $\bZ^3$ (identified with the corresponding symmetric matrices) with the following properties. The sequence $(\norm{\bv_i})_{i\geq 0}$ tends to infinity,
\begin{equation}
    \label{eq: thm fischler}
   \norm{\bv_i\wedge\Xi} = \norm{\bv_i}^{-1+o(1)},
\end{equation}
where $\Xi:=(1,\xi,\xi^2)$, and there exists a function $\psi:\bN\rightarrow\bN$ such that $\bv_{i+1}$ is collinear to $\bv_i\Adj(\bv_{\psi(i)})\bv_i$ for each large enough $i$. If moreover $\beta_0(\xi) < \sqrt 3$, then we may choose $\psi=\psi_\us$ for a bounded sequence $\us$ of positive integers.
\end{theorem}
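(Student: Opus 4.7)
The plan is to recover the palindromic structure directly from $\xi$ using only the hypothesis $\beta_0(\xi) < 2$, following the strategy of Fischler's two cited papers. First I would unpack the definition of $\beta_0(\xi)$ to extract an infinite sequence of primitive symmetric integer points $\bv_i\in\bZ^3$ (viewed as symmetric matrices via \eqref{eq: identification R^3 avec Mat(R)}) whose norms tend to infinity and which approximate $\Xi=(1,\xi,\xi^2)$ as well as $\beta_0$ allows. The condition $\beta_0(\xi)<2$ is critical here, because it forces a gap principle guaranteeing that such best palindromic approximants are, up to sign, unique at each sufficiently large height scale; without it the sequence would not be well defined.

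The heart of the proof is the multiplicative recursion. On the combinatorial side, the palindromic identity $\pi_{i+1}=\pi_i\pi_{\psi(i)}^{-1}\pi_i$ translates, under any matrix morphism $\Phi$ producing symmetric matrices, into $\Phi(\pi_{i+1})=\Phi(\pi_i)\Phi(\pi_{\psi(i)})^{-1}\Phi(\pi_i)$, which after clearing the factor $\det(\Phi(\pi_{\psi(i)}))$ is collinear to $\Phi(\pi_i)\Adj(\Phi(\pi_{\psi(i)}))\Phi(\pi_i)$. To prove the same identity holds for the abstractly defined $\bv_i$, I would proceed inductively: given $\bv_0,\dots,\bv_i$, form the symmetric matrix $\bv_i\Adj(\bv_j)\bv_i$ for each previous index $j<i$, estimate its norm and its distance to $\Xi$, and show that for exactly one choice $j=\psi(i)$ this product has the height and quality required to be collinear (by uniqueness) to $\bv_{i+1}$. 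The estimate $\norm{\bv_i\wedge\Xi}=\norm{\bv_i}^{-1+o(1)}$ then falls out by combining the upper bound inherited from the definition of $\beta_0$ with a matching lower bound: primitivity of $\bv_i$ together with the recursion constrains $\norm{\bv_{i+1}}$ in terms of $\norm{\bv_i}^2/\norm{\bv_{\psi(i)}}$, which by a telescoping growth estimate is incompatible with $\norm{\bv_i\wedge\Xi}$ being much smaller than $\norm{\bv_i}^{-1}$.

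For the refinement under the stronger hypothesis $\beta_0(\xi)<\sqrt 3$, I would exploit the same growth relation $\norm{\bv_{i+1}}\asymp \norm{\bv_i}^2/\norm{\bv_{\psi(i)}}$ to bound the gaps $i-\psi(i)$: any unbounded excursion in $i-\psi(i)$ would push the approximation exponent beyond $\sqrt 3$, contradiction. Once the gaps are bounded, one recognises $\psi$ as $\psi_{\us}$ for some bounded sequence $\us$ of positive integers, by comparing its recursive combinatorics with the description \eqref{eq intro: rec functio psi pour mots} of Sturmian palindromic indices. The main obstacle is the uniqueness step inside the recursion argument: one must rule out that $\bv_{i+1}$ is collinear to any spurious product $\bv_i\Adj(\bv_j)\bv_i$ with $j\neq\psi(i)$, and this is precisely where $\beta_0(\xi)<2$ is used in an essential, non-asymptotic way.
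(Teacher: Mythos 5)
First, a point of order: the paper does not prove this statement at all. It is recalled verbatim as a result of Fischler (a combination of Theorem~4.1 of \cite{fischler2007palindromic} with Lemma~7.1 of \cite{fischler2006palindromic}), so there is no in-paper proof to compare with. Judged on its own merits, your sketch has the right broad shape (minimal points, a matrix recursion inherited from palindromes, growth estimates), but the two steps you yourself identify as the heart of the matter are exactly where the argument does not close.

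The main gap is the identification step. Showing that $\bv_i\Adj(\bv_j)\bv_i$ has ``the height and quality required'' cannot force collinearity with $\bv_{i+1}$: for two primitive integer points $\bx,\by$ of comparable norm $H$ with $\norm{\bx\wedge\Xi},\norm{\by\wedge\Xi}\ll H^{-1+o(1)}$, the standard bound gives only $\norm{\bx\wedge\by}\ll \norm{\bx}\,\norm{\by\wedge\Xi}+\norm{\by}\,\norm{\bx\wedge\Xi}\ll H^{o(1)}$, which is not $<1$, so the ``nonzero integer vector has norm $\geq 1$'' trick fails precisely here. Fischler's actual argument runs differently: one uses $\beta_0(\xi)<2$ to show that $|\det(\bv_i)|=\norm{\bv_i}^{o(1)}$ and that the determinants $\det(\bv_i,\bv_{i+1},\bv_{i+2})$ vanish outside a sparse set of indices, so that long runs of consecutive minimal points lie in a common rational plane; the recursion is then extracted via the identity $\det(\bx,\by,\bz)=-\det(\by)\Tr(J\bx\by^{-1}\bz)$ (the same identity the present paper uses in the proof of Lemma~\ref{lem: degenerate N symmetric}), with $\psi(i)$ \emph{defined} by where these planes change, not selected a posteriori among all $j<i$. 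Two further points: your lower bound $\norm{\bv_i\Adj(\bv_j)\bv_i}\gg\norm{\bv_i}^2/\norm{\bv_j}$ is not automatic because of possible cancellation in the triple product (this is exactly the ``multiplicative growth'' difficulty the paper devotes \S\ref{subsection: multi growth} to), while the lower bound in \eqref{eq: thm fischler} follows simply from $1\leq|\det(\bv_i)|\ll\norm{\bv_i}\,\norm{\bv_i\wedge\Xi}$ once $\det(\bv_i)\neq0$ is established, with no telescoping needed. Finally, for the last assertion, bounding the gaps $i-\psi(i)$ is necessary but nowhere near sufficient to conclude $\psi=\psi_\us$: the functions $\psi_\us$ form a very rigid subfamily (one needs $\psi(i)=i-1$ except at the indices $t_k$, where $\psi(t_k)=t_{k-1}-1$), and recognizing $\psi$ as one of them requires the full ``asymptotically reduced'' structure and the combinatorial Lemma~7.1 of \cite{fischler2006palindromic}, which your sketch does not supply.
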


The result of Fischler is more precise. It shows that $\psi$ belongs to a narrow class of functions called \textsl{asymptotically reduced} (see \cite[Definition 2.1]{fischler2007palindromic}), which includes all functions  $\psi_\us$ with $\us$ bounded.
In general each asymptotically reduced function comes from an infinite word with a large density of palindromic prefixes $\pi_i$ in such a way that the recurrence \eqref{eq intro: rec functio psi pour mots} holds \cite[Section~3.1]{fischler2006palindromic}. In \cite{fischler2007palindromic}, Fischler motivates and asks the following question.

\bigskip

\noindent\textbf{Problem.} Let $\xi\in\bR$ which is neither rational nor quadratic. Does the condition $\beta_0(\xi)<2$ imply $\hlambda_2(\xi)=1/\beta_0(\xi)$ ?

\bigskip

The following partial answer proves a claim made by Fischler in \cite{fischler2004spectres}.


\begin{theorem}
\label{Thm beta_0 < sqrt}
    Let $\xi$ be a real number which is neither rational nor quadratic. If $\beta_0(\xi)<\sqrt 3$, then $\hlambda_2(\xi)  = \lambdaL(\xi) = 1/\beta_0(\xi)$.
\end{theorem}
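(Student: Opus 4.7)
The strategy is to identify $\xi$ as an element of $\Sturm(\us)$ with $\delta(\xi)=0$, for the bounded sequence $\us$ produced by Fischler's theorem, and then to read off the two exponents from Theorem~\ref{reciproque Thm 3-systeme nombre quasi sturmien intro}.

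Applying Theorem~\ref{Rappels Thm 4.1 SF VO} to $\xi$ (admissible since $\beta_0(\xi)<\sqrt{3}<2$) yields a sequence $(\bv_i)_{i\geq 0}$ of primitive symmetric matrices in $\bZ^3\cong\Mat(\bZ)$ with $\norm{\bv_i\wedge\Xi}=\norm{\bv_i}^{-1+o(1)}$, satisfying $\bv_{i+1}$ collinear to $\bv_i\Adj(\bv_{\psi(i)})\bv_i$ for some $\psi=\psi_\us$ with $\us$ a bounded sequence of positive integers; in particular $\sigma:=\sigma(\us)>0$. Using the equivalent Fischler-style formulation of $\Sturm(\us)$ announced in Section~\ref{subsection: proofs des thm}, one converts this data into a sequence $(\bw_k)_{k\geq 0}$ in $\MM$ satisfying $\bw_{k+1}=\bw_k^{s_{k+1}}\bw_{k-1}$, whose associated symmetric factors coincide (up to scale) with the $\bv_i$; this is the matricial counterpart of the decomposition of palindromic prefixes of $w_\phi$ as $w_k^{\ell+1}w_{k-1}$. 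Conditions (i)--(iii) of Definition~\ref{Def fonction sturmienne, version polynomiale} are then routine: (i) by construction, (ii) as a standard spectral-gap estimate on powers of $\bw_k$ (made easy by the boundedness of $\us$), and (iii) because the vanishing of $\norm{\bv_i\wedge\Xi}$ translates directly into the vanishing of $P_k(\xi)/H(P_k)$.

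The main obstacle is condition (iv), equivalent to $\delta(\xi)=0$. Here the sharp exponent $-1$ in $\norm{\bv_i\wedge\Xi}=\norm{\bv_i}^{-1+o(1)}$ is decisive: Theorem~\ref{reciproque Thm 3-systeme nombre quasi sturmien intro} combined with the recurrence on $\bw_k$ shows that any $\delta>0$ would force the Fischler-type approximations to decay strictly faster than $\norm{\bv_i}^{-1}$, contradicting the identity supplied by Fischler's theorem. This forces $\delta(\xi)=0$ and places $\xi\in\Sturm(\us)$.

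Since $\delta=0<h(\sigma)$, Theorem~\ref{reciproque Thm 3-systeme nombre quasi sturmien intro} yields
\[
\hlambda_2(\xi)=\lambdaL(\xi)=\frac{1+\sigma}{2+\sigma}.
\]
It remains to identify this common value with $1/\beta_0(\xi)$. A direct analysis of the growth of $(\norm{\bv_i})$ under $\bv_{i+1}\propto\bv_i\Adj(\bv_{\psi(i)})\bv_i$ with $\psi=\psi_\us$, read off from the combinatorics of $\psi_\us$ (equivalently, from the length ratios of palindromic prefixes of $w_\phi$), gives the growth rate $(2+\sigma)/(1+\sigma)$. Combined with the general inequality $\beta_0(\xi)\geq 1/\hlambda_2(\xi)$, this yields $\beta_0(\xi)=(2+\sigma)/(1+\sigma)=1/\hlambda_2(\xi)$, completing the proof.
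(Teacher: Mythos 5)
Your overall strategy is the same as the paper's: feed Fischler's Theorem~\ref{Rappels Thm 4.1 SF VO} into the $\Sturm(\us)$ machinery, get $\delta(\xi)=0$, and apply Theorem~\ref{reciproque Thm 3-systeme nombre quasi sturmien intro}. But two steps as you present them do not work. First, your argument for the determinant condition (hence for $\delta(\xi)=0$) is circular: you invoke Theorem~\ref{reciproque Thm 3-systeme nombre quasi sturmien intro} ``combined with the recurrence'' to rule out $\delta>0$, but that theorem only applies once $\xi\in\Sturm(\us)$ is established, and the determinant condition is precisely one of the membership requirements you are trying to verify. The correct, non-circular argument is elementary: the classical estimate $|\det(\bv_i)|\ll\norm{\bv_i}\,\norm{\bv_i\wedge\Xi}$ together with Fischler's sharp exponent $\norm{\bv_i\wedge\Xi}=\norm{\bv_i}^{-1+o(1)}$ gives $|\det(\bv_i)|=\norm{\bv_i}^{o(1)}$ directly, which is condition~\ref{item:def Sturm(psi):item 3} of Definition~\ref{def: Sturm(psi)} and forces $\delta(\xi)=0$. (This is also why the paper checks Definition~\ref{def: Sturm(psi)} rather than routing through the polynomial version, Definition~\ref{Def fonction sturmienne, version polynomiale}, whose conditions (ii) and (iii) are not as ``routine'' as you claim — multiplicative growth is Proposition~\ref{reciproque Prop w_k croissance multi}, not a spectral-gap triviality. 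Incidentally, with $\delta>0$ the approximations would decay \emph{slower} than $\norm{\bv_i}^{-1}$, not faster.)

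Second, your final step replaces a one-line citation with an unexecuted computation. Once you know $\hlambda_2(\xi)=\lambdaL(\xi)$, the identification with $1/\beta_0(\xi)$ is immediate from the fact, recorded in Section~\ref{section: notation} (from \cite[Lemma~1.3]{poels2019newExpo}), that $\beta_0(\xi)<2$ implies $\lambdaL(\xi)=1/\beta_0(\xi)$. Your proposed alternative — computing $\beta_0(\xi)=(2+\sigma)/(1+\sigma)$ by a ``direct analysis of the growth of $\norm{\bv_i}$'' — would require showing that the only integer solutions of $\norm{\bx\wedge\Xi}\leq\norm{\bx}^{-\mu}$ for $\mu$ near $1$ are (up to finitely many) the points $\bv_i$; that is exactly the parametric-geometry argument carried out in the proof of Theorem~\ref{reciproque Thm exposants nb quasi sturmien}, and you neither perform it nor cite it. As written, the upper bound $\beta_0(\xi)\leq(2+\sigma)/(1+\sigma)$ is asserted, not proved.
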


The idea is to prove that the condition $\beta(\xi)< \sqrt 3$ implies that $\xi\in \Sturm(\us)$ and $\delta(\xi)=0$, where  $\us$ is the sequence given by Theorem~\ref{Rappels Thm 4.1 SF VO}, and $\delta(\xi)$ is the quantity appearing in Theorem~\ref{reciproque Thm 3-systeme nombre quasi sturmien intro} (see Section~\ref{subsection: proofs des thm}). The first observation follows relatively easily from the following alternative definition of the set $\Sturm(\us)$ (see Section~\ref{subsection: proofs des thm}).

\begin{definition}
    \label{def: Sturm(psi)}
    Let $\us$ be a sequence of positive integers, write $\sigma:=\sigma(\us)$ and  $\psi=\psi_\us$. The set $\Sturm(\us)$ is the set of real numbers $\xi$ which are neither rational nor quadratic, and for which there exists a sequence $(\by_i)_{i\geq 0}$ of non-zero primitive points in $\bZ^3$ (identified with their symmetric matrices) with the following properties.
    \begin{enumerate}[label=\rm(\roman*)]
        \item \label{item:def Sturm(psi):item 1} The sequence $(\by_i)_{i\geq 0}$ converges projectively to $\Xi:=(1,\xi,\xi^2)$.
            \smallskip
        \item \label{item:def Sturm(psi):item 2} The matrix $\by_{i+1}$ is  proportional to $\by_i\Adj(\by_{\psi(i)})\by_i$ for each large enough $i$.
            \smallskip
        \item \label{item:def Sturm(psi):item 3} We have $|\det(\by_i)| \leq \norm{\by_i}^{\sigma/(1+\sigma)+o(1)}$ as $i$ tends to infinity with $\psi(i+1) < i$.
    \end{enumerate}
\end{definition}

 We end this introduction with a few remarks, see Section~\ref{subsection: new characterization of Sturm} for more details.

\begin{remarks}
Since $\xi\notin\bQ$, the sequence $(\norm{\by_i})_{i\geq 0}$ tends to infinity and $\det(\by_i)\neq 0$ for $i$ large enough (since for large $i$, by \ref{item:def Sturm(psi):item 2} and \ref{item:def Sturm(psi):item 1}, the kernel of the matrix $\by_i$ is included in the kernel of $\Xi$)
\smallskip

The parameter $\delta(\xi)$ in Theorem~\ref{reciproque Thm 3-systeme nombre quasi sturmien intro} can be defined as the supremum limit of $\log|\det(\by_i)|/\log\norm{\by_i}$ as $i$ tends to infinity with $\psi(i+1) < i$.
\smallskip

In view of \eqref{eq intro: rec functio psi pour mots}, if $\xi=\xi_\phi$, then we can take $\by_i=\Phi(\pi_i)$ for $i$ large, where $\Phi$ is as in \eqref{eq intro: Phi_A,B fractions continues}. Since in that case $\det(\by_i)=\pm 1$, we have $\delta(\xi_\phi)=0$.
\smallskip

By \ref{item:def Sturm(psi):item 2}, the sequence $(\by_i)_{i\geq i_0}$ (with $i_0$ large enough) is entirely determined by three points in $\bZ^3$. We thus have a surjection $(\bZ^3)^3\rightarrow\Sturm(\us)$, and $\Sturm(\us)$ is therefore at most countable.
\smallskip

It follows easily from \cite[Definition 6.2]{poels2017exponents} that the set of Sturmian type numbers constructed in \cite{poels2017exponents} is included in $\Sturm$. More precisely, if $\us$ is bounded, then the (infinite) set of proper $\psi_\us$-Sturmian numbers (see Definition~\ref{def:sturmian type numbers} and \cite[Proposition 6.1]{poels2017exponents}) is included in $\Sturm(\us)$. This yields the density of $\Delta(\us)$ in $[0,\sigma/(1+\sigma)]$ when $\sigma=\sigma(\us) > 0$.
\end{remarks}

Our paper is organized as follows. In the next section, we define the Diophantine exponents involved in Theorem~\ref{reciproque Thm 3-systeme nombre quasi sturmien intro} and introduce some notation. Sections~\ref{section: combinatorics of Sturmian seq} and \ref{section: estimates for Sturmian sequences} are devoted to the theory of Sturmian sequences of matrices; in the former we focus on the combinatorial aspects and establish new key-identities, while in the latter we study the asymptotic behavior of these sequences. Combining those results, we obtain a new characterization of the set $\Sturm$ in terms of Sturmian sequences of matrices (see
Section~\ref{subsection: new characterization of Sturm}). This allows us, using parametric geometry of numbers, to prove our main theorem in the last section.\\\\

\section{Notation}
\label{section: notation}

Given a positive integer $n$ and $\bx\in\bR^n$, we define its norm $\norm{\bx}$ as the largest absolute value of its coordinates. Let $\xi$ be a real number which is neither rational nor quadratic. We associate to $\xi$ several classical Diophantine exponents as follows. The ordinary (resp. uniform) exponent of simultaneous approximation $\lambda_2(\xi)$, (resp. $\hlambda_2(\xi)$) is the supremum of real numbers $\lambda$ such that, for arbitrarily large values of $X$ (resp. for each $X$ large enough), there exists $\bx\in\bZ^3\setminus\{0\}$ satisfying
\[
    \norm{\bx\wedge\Xi} \leq X^{-\lambda} \AND \norm{\bx} \leq X,
\]
where $\bx\wedge\by\in\bR^3$ denotes the cross product of $\bx$ and $\by$ in $\bR^3$. Similarly, the ordinary (resp. uniform) exponent $\omega_2(\xi)$, (resp. $\homega_2(\xi)$) is the supremum of real numbers $\omega$ such that, for arbitrarily large values of $X$ (resp. for each $X$ large enough), there exists $\bx\in\bZ^3\setminus\{0\}$ satisfying
\[
    |\bx\cdot\Xi|\leq X^{-\omega} \AND \norm{\bx} \leq X,
\]
where $\bx\cdot\by$ denotes the standard scalar product of $\bx$ and $\by$ in $\bR^3$. The exponent $\omega_2^*(\xi)$ (resp. $\homega_2^*(\xi)$) is the supremum of real numbers $\omega$ such that, for arbitrarily large values of $X$ (resp. for each $X$ large enough), there is an algebraic number $\alpha$ of degree at most $2$ satisfying
\begin{align*}
    0 < |\xi-\alpha|\leq H(\alpha)^{-1}X^{-\omega} \AND H(\alpha) \leq X,
\end{align*}
where $H(\alpha)$ is the \textsl{height} of $\alpha$, defined as the largest absolute value of the coefficients of its irreducible minimal polynomial over $\bZ$. See \cite{bugeaud2005exponentsSturmian} for the motivation of the division by $H(\alpha)$ in the left-hand side. We now recall the definitions of the last two exponents $\beta_0(\xi)$ and $\lambdaL(\xi)$, introduced respectively by Fischler in \cite{fischler2007palindromic} and by the author in \cite{poels2019newExpo} on the basis of \cite{fischler2007palindromic}. Set $\Xi=(1,\xi,\xi^2)$ and for each $0\leq \mu < \lambda(\xi)$, denote by $\hlambda_\mu(\Xi)$ the supremum of the real numbers $\lambda$ for which
\begin{align*}
     \norm{\bx\wedge\Xi} \leq \min(X^{-\lambda},\norm{\bx}^{-\mu}) \AND \norm{\bx} \leq X
\end{align*}
admits a non-zero integer solution for each sufficiently large value of $X$. The map $\mu\mapsto\hlambda_\mu(\Xi)$ is non-increasing, and for $\mu=0$ we simply have $\hlambda_0(\Xi)=\hlambda_2(\xi)$. We set
\[
    \beta_0(\xi) = \left\{ \begin{array}{ll}
        \lim_{\mu\rightarrow 1^-}1/\hlambda_\mu(\Xi) & \textrm{if $\lambda_2(\xi)\geq 1$,} \\
        \infty & \textrm{else}
    \end{array}\right.,
    \AND
    \lambdaL(\xi) = \lim_{\mu\rightarrow \lambda_2(\xi)^-}\hlambda_\mu(\Xi).
\]
In particular $\lambdaL(\xi) \leq \hlambda_2(\xi)$. Note that the definition of $\lambdaL$ in \cite{poels2019newExpo} applies to general points $\Xi=(1,\xi,\eta) \in \bR^3$. In the current situation, the two above exponents are connected in the following way: if $\beta_0(\xi) < 2$, then $\lambda_2(\xi)=1$ and $\lambdaL(\xi)=1/\beta_0(\xi)$ (see \cite[Lemma~1.3]{poels2019newExpo}). The classical general estimates below are valid for each $\xi\in\bR$ which is neither rational nor quadratic. Recall that $\gamma = (1+\sqrt 5)/2$ denotes the golden ratio. First
\begin{align*}
    \frac{1}{2} \leq \hlambda_2(\xi) \leq 1/\gamma \AND 2 \leq \homega_2(\xi) \leq \gamma^2.
\end{align*}
The lower bounds are obtained by the Dirichlet box principle, the upper bounds follow respectively from  \cite[Theorem $1a$]{davenport1969approximation} and from \cite{arbourRoyCriterionDegreTwo}. Jarn\'ik's identity \cite[Theorem 1]{jarnik1938khintchineschen} links $\hlambda_2(\xi)$ and $\homega_2(\xi)$ as follows
\begin{equation}
\label{Eq Jarnik}
    \hlambda_2(\xi) = 1 - \frac{1}{\homega_2(\xi)}.
\end{equation}
We also have (see \cite[Theorem 2.5]{bugeaud2015exponents})
\begin{align}
    \label{eq: intro general pour exposants polynomials}
    1 \leq \homega_2^*(\xi) \leq \min\{\omega_2^*(\xi),\homega_2(\xi)\} \leq \max\{\omega_2^*(\xi),\homega_2(\xi)\} \leq \omega_2(\xi).
\end{align}

We now recall the notion of \textsl{Sturmian functions} $\psi_\us$, which intervene in the recurrence relation \eqref{eq intro: rec functio psi pour mots} of the palindromic prefixes of a Sturmian characteristic word. They play a central role in \cite{poels2017exponents} (see also \cite{fischler2007palindromic} and \cite{fischler2006palindromic}).

\begin{definition}
    \label{Def fonction sturmienne}
    Let $\us = (s_k)_{k\geq 1}$ be a sequence of positive integers and for each $k\geq 0$ set $t_k = s_0+s_1+\dots+s_k$ (where $s_0 = -1$). We associate to $\us$ a function $\psi=\psi_\us$ defined on $\bN$ as follows.
    \begin{equation*}
        \psi(i) :=
        \left\{ \begin{array}{ll}
            t_{k-1} - 1 & \textrm{if $i=t_k$ with $k\geq 1$},\\
            i-1 & \textrm{else}. 
        \end{array} \right.
    \end{equation*}
    Note that $\us$ is entirely characterised by $\psi$, the sequence $(t_k)_{k\geq 1}$ consisting of the integers $n$ such that $\psi(n)\leq n-2$.
\end{definition}

We denote by $\norm{\bw}$  the norm of a matrix $\bw\in\Mat(\bR)$ defined as the largest absolute value of its coefficients. Recall that $\bR^3$  is identified with $\Mat(\bR)$ under the map \eqref{eq: identification R^3 avec Mat(R)}. Accordingly, we define the determinant $\det(\bx)=x_0x_2-x_1^2$ of a point $\bx=(x_0,x_1,x_2)\in\bR^3$. Similarly, given symmetric matrices $\bx,\by$, we write $\bx\wedge\by$ to denote the cross product of the corresponding points in $\bR^3$. We also identify $\bR^3$ (and thus $\Mat(\bR)$) to $\bR[X]_{\leq 2}$, the space of polynomial of degree at most $2$, via the map $(x_0,x_1,x_2) \longmapsto x_0+x_1X+x_2X$.\\
For any $\bw\in\Mat(\bR)$, we denote by $\transpose{\bw}$ its transpose, and by $\Adj(\bw)$ its adjoint. The \textsl{content} of a non-zero matrix $\bw\in\Mat(\bZ)$ or of a non-zero point $\by\in\bZ^3$ is the greatest common divisor of its coefficients. We say that such a matrix or point is \textsl{primitive} if its content is $1$. More generally, if $\bw\in\Mat(\bR)\setminus\{0\}$ is proportional to a matrix of $\Mat(\bZ)$, we say that $\bw$ is \textsl{defined over $\bQ$} and we denote by $\cont(\bw)$ the positive real number $\alpha$ such that $\alpha^{-1}\bw$ is a primitive matrix of $\Mat(\bZ)$. We set
\begin{equation}
\label{Eq def matrice J}
    J = \left(\begin{array}{cc} 0 & 1\\ -1 & 0 \end{array}\right) \AND \Id = \left(\begin{array}{cc} 1 & 0\\ 0 & 1 \end{array}\right).
\end{equation}
Given a non-empty interval $A\subset\bR$, a positive integer $n$ and $F:A\rightarrow \bR^n$, we denote by $\normI{F}:= \sup_{q\in A} \norm{F(q)}$. Finally, let $I$ be a set (typically of the form $\bN^r$), $(a_{\underline{i}})_{\underline{i}\in I}$ and let $(b_{\underline{i}})_{\underline{i}\in I}$ be two sequences of non-negative real numbers indexed by $I$. For any non-empty subset $J\subseteq I$, we write \say{$a_{\underline{i}} \ll b_{\underline{i}}$ for $\underline{i} \in J$} or \say{$b_{\underline{i}} \gg a_{\underline{i}}$ for $\underline{i} \in J$} if there is a constant $c>0$ such that for each $\underline{i}\in J$ we have $a_{\underline{i}} \leq cb_{\underline{i}}$. We write \say{$a_{\underline{i}}\asymp b_{\underline{i}}$ for $\underline{i} \in J$} if both $a_{\underline{i}}\ll b_{\underline{i}}$ and $b_{\underline{i}}\ll a_{\underline{i}}$ for $\underline{i} \in J$ hold. In the special case where $I = \bN$, unless otherwise stated, we will always implicitly take $J$ of the form $[j_0,+\infty) \cap \bN$ for $j_0$ large enough, and we will simply write $a_{\underline{i}} \ll b_{\underline{i}}$, $b_{\underline{i}} \gg a_{\underline{i}}$ and $a_{\underline{i}} \asymp b_{\underline{i}}$.

\section{Combinatorics of Sturmian sequences of matrices}
\label{section: combinatorics of Sturmian seq}

Let $\us = (s_k)_{k\geq 1}$ be a sequence of positive integers (not necessarily bounded) and set $\psi=\psi_\us$ (see Definition~\ref{Def fonction sturmienne}). We define below the notions of $\psi$-Sturmian sequences and admissible $\psi$-Sturmian sequences of matrices. We develop the latter notion in §\ref{subsection: admissibility prop}. To an admissible $\psi$-Sturmian sequence correspond two sequences of symmetric matrices $(\by_i)_i$ and $(\bz_i)_i$, which we also view as sequences in $\bR^3$ (see §\ref{subsection: symmetric matrices associated to}). In our applications, $(\by_i)_i$ will provide ``good'' solutions to the problem of simultaneous approximation, whereas $(\bz_i)_i$ will be related to the problem with polynomials. In §\ref{subsection: fonction U}, we establish a new and surprising formula for $(\bz_i)_i$. This is one of the key-properties for studying the exponents $\omega_2^*$ and $\homega_2^*$.

\begin{definition}
A $\psi$\textsl{-Sturmian} sequence in $\GL(\bR)$ is a sequence $(\bw_k)_{k\geq 0}$ such that $\bw_0,\bw_1\in\GL(\bR)$, and for each $k\geq 1$, we have the recurrence relation $\bw_{k+1} = \bw_k^{s_{k+1}}\bw_{k-1}$.
\end{definition}

Clearly, such a sequence is entirely determined by its first two elements $\bw_0$ and $\bw_1$.

\begin{definition}
    Let  $(\bw_k)_{k\geq 0}$ be a $\psi$-Sturmian sequence in $\GL(\bR)$.
\begin{itemize}
    \item We say that $(\bw_k)_{k\geq 0}$ is \textsl{admissible} if the matrix $\bw_0\bw_1-\bw_1\bw_0$ is inversible. Given an integer $k\geq 1$, the identity $\bw_k\bw_{k+1}-\bw_{k+1}\bw_k = -\bw_k^{s_{k+1}-1}(\bw_{k-1}\bw_k-\bw_k\bw_{k-1})$ implies that $(\bw_k)_{k\geq 0}$ is admissible if and only if $\bw_{k-1}\bw_k-\bw_k\bw_{k-1}$ is inversible.
    \item The sequence $(\bw_k)_{k\geq 0}$ has a \textsl{multiplicative growth} if $\norm{\bw_k^{\ell+1}\bw_{k-1}} \asymp \norm{\bw_k}\norm{\bw_k^{\ell}\bw_{k-1}}$ for $k\geq 1$ and $0\leq \ell \leq s_{k+1}$
    \item Finally, $(\bw_k)_{k\geq0 }$ is \textsl{defined over $\bQ$} if, for each $k \geq 0$, the matrix $\bw_k$ is proportional to a matrix of $\Mat(\bQ)$.
\end{itemize}
\end{definition}

\subsection{Admissible sequences} ~ \medskip
\label{subsection: admissibility prop}

In \cite{roy2007two} and \cite{poels2017exponents} a $\psi$-Sturmian sequence $(\bw_i)_{i\geq 0}$ is said to be admissible if there exists a matrix $N\in\GL(\bR)$ satisfying
\begin{equation}
\label{eq: condition symetrie admissible}
    \bw_0\transpose{N},\quad\bw_1N,\AND \bw_1\bw_0\transpose{N}\quad \textrm{are symmetric},
\end{equation}
which gives a slightly different notion than ours. Note that by taking the transpose of $\bw_1\bw_0\transpose{N}$ and using successively the fact that $\bw_0\transpose{N}$ and $\bw_1N$ are symmetric, \eqref{eq: condition symetrie admissible} implies that
\begin{align}
    \label{eq: lien w0w1 w1w0}
    \bw_1\bw_0\transpose{N} = \bw_0\bw_1N.
\end{align}
In that case and if $N$ is symmetric, then $\bw_0$ and $\bw_1$ commute; this is a degenerate situation that we want to avoid. According to the next result, if $\bw_0$ and $\bw_1$ do not commute, then our definition of admissibility is equivalent to the existence of $N\in\GL(\bR)$ satisfying \eqref{eq: condition symetrie admissible}. In addition, up to a multiplicative constant, we provide a simple expression for $N$.

\begin{proposition}
\label{prop: conditions equivalentes admissibilite}
Let $\bw_0,\bw_1\in\GL(\bR)$. Then the following conditions are equivalent:
\begin{enumerate}[label=\rm(\roman*)]
 \item \label{enumi: admissible condition 1} $\det(\bw_0\bw_1-\bw_1\bw_0) \neq 0$
 \smallskip
 \item \label{enumi: admissible condition 2} $\bw_1\bw_0\neq \bw_0\bw_1$ and there exists a matrix $N\in\GL(\bR)$
 satisfying \eqref{eq: condition symetrie admissible}.
\end{enumerate}
If these conditions are satisfied, then any $N\in\Mat(\bR)$ satisfying \eqref{eq: condition symetrie admissible} is proportional to
\begin{equation}
    \label{eq: def N}
    M=(\Id-\bw_1^{-1}\bw_0^{-1}\bw_1\bw_0)J.
\end{equation}
\end{proposition}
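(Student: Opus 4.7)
The plan is to exploit that for $2\times 2$ matrices symmetry is a codimension-one linear condition, so the three conditions in \eqref{eq: condition symetrie admissible} cut out a subspace $V\subseteq\Mat(\bR)$ of dimension at least $4-3=1$. The main algebraic tool throughout is the identity $J\transpose{Y}=\Adj(Y)J$ (equivalent to $\Adj(Y)=J^{-1}\transpose{Y}J$), valid for all $Y\in\Mat(\bR)$.

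For (ii)$\Rightarrow$(i), I first derive the key relation $\bw_1\bw_0\transpose{N}=\bw_0\bw_1 N$ by taking the transpose of $\bw_1\bw_0\transpose{N}$ (symmetric by hypothesis) and applying the symmetries of $\bw_0\transpose{N}$ and $\bw_1 N$ to simplify the resulting right-hand side. Rearranging yields
\[
    (\bw_0\bw_1-\bw_1\bw_0)N \;=\; \bw_1\bw_0(\transpose{N}-N).
\]
If $\transpose{N}=N$ the right-hand side vanishes; since $N$ is invertible, this would force $\bw_0\bw_1=\bw_1\bw_0$, against the hypothesis of (ii). Hence $\transpose{N}-N$ is a nonzero antisymmetric $2\times 2$ matrix, therefore a nonzero multiple of $J$, and in particular invertible. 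Since $\bw_1\bw_0$ and $N$ are also invertible, so is $\bw_0\bw_1-\bw_1\bw_0$.

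For (i)$\Rightarrow$(ii), rewriting $M=\bw_1^{-1}\bw_0^{-1}(\bw_0\bw_1-\bw_1\bw_0)J$ shows $M\in\GL(\bR)$ under (i). I then verify each of the three symmetry conditions for $M$. The middle one is easiest: $\bw_1 M=(\bw_1-\bw_0^{-1}\bw_1\bw_0)J$, and the bracketed matrix is traceless since conjugation preserves trace, so $\bw_1 M$ is symmetric (any traceless $C\in\Mat(\bR)$ makes $CJ$ symmetric by direct inspection). For the other two, I set $X=\bw_1^{-1}\bw_0^{-1}\bw_1\bw_0$ (so $\det X=1$), write $\transpose{M}=-J(\Id-\transpose{X})$, and use $J\transpose{Y}=\Adj(Y)J$ to reduce the symmetry of $\bw_0\transpose{M}$ to the matrix identity $\bw_0\Adj(X)+X\Adj(\bw_0)=\Tr(\bw_0)\Id$. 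The latter follows from computing $\bw_0\Adj(X)=\bw_1^{-1}\bw_0\bw_1$ and $X\Adj(\bw_0)=\det(\bw_0)\bw_1^{-1}\bw_0^{-1}\bw_1$, whose sum equals $\bw_1^{-1}(\bw_0+\Adj(\bw_0))\bw_1=\Tr(\bw_0)\Id$ by Cayley-Hamilton. The symmetry of $\bw_1\bw_0\transpose{M}$ reduces analogously to $\Tr(\bw_0\bw_1)\Id=\bw_0\bw_1+\Adj(\bw_0\bw_1)$.

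The main obstacle is the uniqueness statement. To prove $\dim V=1$, I identify linear forms on $\Mat(\bR)$ with matrices via the Frobenius pairing; the three forms coming from \eqref{eq: condition symetrie admissible} are then represented by $J\bw_0$, $-\transpose{\bw_1}J$, and $J\bw_1\bw_0$, and their linear independence is equivalent (after left-multiplying each by $J^{-1}$) to linear independence of $\bw_0$, $\Adj(\bw_1)$, and $\bw_1\bw_0$ in $\Mat(\bR)$. Suppose a nontrivial relation $\alpha\bw_0+\beta\Adj(\bw_1)+\gamma\bw_1\bw_0=0$ holds. A short case analysis handles the situations where one of the coefficients vanishes: each either places $\bw_0$ in the commutative subalgebra $\bR[\bw_1]$ or forces $\bw_1$ to be scalar. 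When $\alpha\beta\gamma\neq 0$, I left-multiply by $\bw_1$ and use $\bw_1\Adj(\bw_1)=\det(\bw_1)\Id$ together with the Cayley-Hamilton identity $\bw_1^2=\Tr(\bw_1)\bw_1-\det(\bw_1)\Id$ to express $\bw_0^{-1}$ as a linear combination of $\Id$ and $\bw_1$. Either way $\bw_0$ commutes with $\bw_1$, contradicting (i). Thus $V=\bR M$, and any $N$ satisfying \eqref{eq: condition symetrie admissible} is proportional to $M$.
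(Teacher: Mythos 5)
Your proof is correct. The equivalence of \ref{enumi: admissible condition 1} and \ref{enumi: admissible condition 2} follows essentially the paper's route: the implication \ref{enumi: admissible condition 2}$\Rightarrow$\ref{enumi: admissible condition 1} is the same manipulation of $\bw_1\bw_0\transpose{N}=\bw_0\bw_1N$ (you factor the commutator through $\bw_1\bw_0(\transpose{N}-N)$ rather than $\bw_0\bw_1(\transpose{N}-N)$, an immaterial difference), and the converse is in both cases a direct verification that $M$ works --- the paper via the trace criterion $\Tr(AJ)=0$ and the identity \eqref{eq fnct U pour 2nd expr wedge}, you via $J\transpose{Y}=\Adj(Y)J$ and the Cayley--Hamilton identity $Y+\Adj(Y)=\Tr(Y)\Id$; I checked your two reduced identities $\bw_0\Adj(X)+X\Adj(\bw_0)=\Tr(\bw_0)\Id$ and the analogous one for $\bw_1\bw_0$, and they hold. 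Where you genuinely diverge is the uniqueness statement. The paper's argument is a short dimension count: any solution of the three linear conditions satisfies \eqref{eq: lien w0w1 w1w0}, so under \ref{enumi: admissible condition 1} there is no non-zero \emph{symmetric} solution; since the symmetric matrices form a $3$-dimensional subspace of the $4$-dimensional $\Mat(\bR)$, a solution space of dimension $\geq 2$ would have to meet it non-trivially, so the rank is $3$. You instead exhibit the three dual vectors $J\bw_0$, $-\transpose{\bw_1}J$, $J\bw_1\bw_0$ explicitly and prove directly that $\bw_0$, $\Adj(\bw_1)$, $\bw_1\bw_0$ are linearly independent whenever $\bw_0,\bw_1$ do not commute, via Cayley--Hamilton and a case analysis. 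Your version is longer and more computational but yields a slightly sharper fact (it identifies the annihilator of $V$ concretely and shows independence of the three conditions under the mere hypothesis $\bw_0\bw_1\neq\bw_1\bw_0$, not just under \ref{enumi: admissible condition 1}); the paper's version is shorter because it recycles \eqref{eq: lien w0w1 w1w0} and the invertibility of the commutator, which are already in hand.
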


\begin{proof}
Note that the matrix $M$ defined in \eqref{eq: def N} is inversible if and only if  \ref{enumi: admissible condition 1} holds. Suppose \ref{enumi: admissible condition 1}. Since $J\bx J = -\det(\bx)\transpose{\bx}^{-1}$ for each $\bx\in\GL(\bR)$, we have
\begin{equation}
    \label{eq fnct U pour 2nd expr wedge}
    \transpose{\big((\Id-\bx^{-1}\by^{-1}\bx\by)J\big)} = -(\Id-\by^{-1}\bx^{-1}\by\bx)J
\end{equation}
for any $\bx,\by\in\GL(\bR)$. In particular $\transpose{M} = (\Id-\bw_0^{-1}\bw_1^{-1}\bw_0\bw_1)J$. Moreover, since $A\in\Mat(\bR)$ is symmetric if and only if $\Tr(AJ) = 0$, using \eqref{eq: def N} and the above expression of $\transpose{M}$, it is easily seen that \eqref{eq: condition symetrie admissible} is satisfied with $N=M$, hence \ref{enumi: admissible condition 2}.

\smallskip

Now we prove \ref{enumi: admissible condition 2} $\Rightarrow$ \ref{enumi: admissible condition 1}. First, note that if $N\in\GL(\bR)$ satisfies \eqref{eq: condition symetrie admissible}, then we have \eqref{eq: lien w0w1 w1w0}. Since by hypothesis $\bw_1\bw_0 \neq \bw_0\bw_1$, it implies that $N$ is not symmetric. Thus
$\det(\transpose{N}- N) \neq 0$ and the matrix $(\bw_0\bw_1-\bw_1\bw_0)\transpose{N} = \bw_0\bw_1(\transpose{N}- N)$ is inversible, hence \ref{enumi: admissible condition 1}.

\smallskip

Suppose now that  \ref{enumi: admissible condition 1} and \ref{enumi: admissible condition 2} are satisfied and let us prove the last part of the proposition. In general, the conditions \eqref{eq: condition symetrie admissible} represent a system of three linear equations in the four unknown coefficients of $N$. By \eqref{eq: lien w0w1 w1w0}, the condition  \ref{enumi: admissible condition 1} implies that there is no non-zero symmetric matrix $N\in\Mat(\bR)$ solution of this system. Its rank is thus equal to $3$ and the space of solution has dimension $1$.
\end{proof}

\subsection{Symmetric matrices associated to Sturmian sequences} ~ \medskip
\label{subsection: symmetric matrices associated to}

We associate to any admissible $\psi$-Sturmian sequence two sequences of symmetric matrices $(\by_i)_{i\geq -2}$
and $(\bz_i)_{i\geq -1}$ as in \cite[Definitions~3.5 and~4.2]{poels2017exponents}. They play a major role in our study.

\begin{definition}
\label{Def (y_i)}
\label{Def (z_i)_i}
Let $(\bw_k)_{k\geq 0}$ be a $\psi$-Sturmian sequence in $\GL(\bR)$, and let $N\in\GL(\bR)$ be such that
$(\by_{-2},\by_{-1},\by_{0}):=(\bw_0\transpose{N},\bw_1N,\bw_1\bw_0\transpose{N})$ is a triple of symmetric
matrices. We define $\bw_{-1} = \bw_0^{-1}\bw_1$, and for each integers $k,\ell \geq 0$ with $0\leq \ell < s_{k+1}$, we set
\begin{align}
\label{Eq Def y_i}
    \by_{t_k + \ell} = \bw_k^{\ell+1}\bw_{k-1}N_k \AND \bz_{t_k+\ell} = \frac{1}{\det(\bw_k)}\by_{\psi(t_{k+1})}\wedge \by_{t_k+\ell},
\end{align}
where $N_k = N$ if $k$ is even, $N_k = \transpose{N}$ if $k$ is odd. By \cite[Proposition~3.6]{poels2017exponents}
the matrix $\by_i$ is symmetric for each $i\geq -2$, so that the wedge product defining $\bz_{t_k+\ell}$ makes sense.
Note that the left-hand side of \eqref{Eq Def y_i} remains valid for $\ell=s_{k+1}$. In particular, for each $k\geq 1$, we have
\begin{equation}
\label{Eq y_psi(k) = w_k-1}
    \by_{\psi(t_k)} = \bw_{k-1}N_k.
\end{equation}
\end{definition}

\begin{remark}
According to Proposition~\ref{prop: conditions equivalentes admissibilite}, if $(\bw_k)_{k\geq 0}$ is admissible,
then the matrix $N$ is proportional to $(\Id-\bw_1^{-1}\bw_0^{-1}\bw_1\bw_0)J$. In the following, if we refer to the
sequences $(\by_i)_{i\geq -2}$ and $(\bz_i)_{i\geq -1}$ associated to an admissible $\psi$-Sturmian sequence without
further precision on $N$, we will always implicitly take $N=(\Id-\bw_1^{-1}\bw_0^{-1}\bw_1\bw_0)J$ in \eqref{Eq Def y_i}.
\end{remark}

Those two sequences satisfy a lot of combinatorial properties, for example \cite[Eq.~(3.4)]{poels2017exponents} yields:
\begin{align}
\label{reciproque recurrence crochet def y_i}
    \by_{i+1} = \by_i\by_{\psi(i)}^{-1}\by_i\quad (i\geq 0).
\end{align}

In the next lemma, we study the degenerate situation where $N$ is symmetric (this is one of the reason why we want to avoid this situation).

\begin{lemma}
\label{lem: degenerate N symmetric}
    Let $(\bw_k)_{k\geq 0}$, $N\in\GL(\bR)$ and $(\by_i)_{i\geq -2}$ be as in  Definition~\ref{Def (y_i)}.
    Then, for each $i\geq -1$ which is not among the $t_k$ ($k\geq 0$), the poins
    $\by_{i-1},\by_i,\by_{i+1}$ are linearly dependent. Moreover, the following assertions are equivalent:
    \begin{enumerate}[label=\rm(\roman*)]
    \item \label{item:lem: degenerate N symmetric: item 1} $(\bw_k)_{k\geq 0}$ is not admissible;
    \smallskip
    \item \label{item:lem: degenerate N symmetric: item 2} $N$ is symmetric;
    \smallskip
    \item \label{item:lem: degenerate N symmetric: item 3} There is $k\geq 0$ such that $\by_{t_k-1},\by_{t_k},\by_{t_k+1}$ are linearly dependent;
    \smallskip
    \item \label{item:lem: degenerate N symmetric: item 4} The space generated by  $(\by_i)_{i\geq -2}$ has dimension at most $2$;
\end{enumerate}
\end{lemma}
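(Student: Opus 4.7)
The plan for the first statement is to apply the recurrence \eqref{reciproque recurrence crochet def y_i} together with the Cayley-Hamilton theorem in dimension~$2$. If $i\geq -1$ is not of the form $t_k$ then $\psi(i) = i-1$, so the recurrence gives $\by_{i+1} = \by_i\by_{i-1}^{-1}\by_i = M\by_i = M^2\by_{i-1}$ with $M := \by_i\by_{i-1}^{-1}\in\GL(\bR)$. Cayley-Hamilton provides $M^2 = \Tr(M)M - \det(M)\Id$, whence
\[
    \by_{i+1} = \Tr(M)\by_i - \det(M)\by_{i-1},
\]
which is the required linear dependence.

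For the chain of equivalences I would proceed in the order (i)$\iff$(ii), (ii)$\Rightarrow$(iv), (iv)$\Rightarrow$(iii), and finally (iii)$\Rightarrow$(ii). The symmetry hypothesis on $(\by_{-2},\by_{-1},\by_0)$ forces \eqref{eq: lien w0w1 w1w0}, and rewriting this as $(\bw_0\bw_1-\bw_1\bw_0)\transpose{N} = \bw_0\bw_1(\transpose{N}-N)$ and taking determinants (using invertibility of $\bw_0\bw_1$ and $N$) gives $\det(\bw_0\bw_1-\bw_1\bw_0)=0$ iff $\det(\transpose{N}-N)=0$. Since $\transpose{N}-N$ is antisymmetric, it is of the form $aJ$ with $a\in\bR$ and determinant $a^2$, so it vanishes iff $N=\transpose{N}$, proving (i)$\iff$(ii). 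For (ii)$\Rightarrow$(iv), symmetry of $N$ gives $\bw_0\bw_1=\bw_1\bw_0$ by the previous step, so the Sturmian recurrence forces every $\bw_k$ (together with $\bw_{-1}=\bw_0^{-1}\bw_1$) into the commutative subalgebra $\AAA:=\bR[\bw_0,\bw_1]\subseteq\Mat(\bR)$, of $\bR$-dimension at most two; since $N_k=N$ for every $k$, each $\by_i$ lies in $\AAA\cdot N$, a subspace of dimension at most two. The implication (iv)$\Rightarrow$(iii) is immediate.

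The main obstacle is (iii)$\Rightarrow$(ii), which I plan to establish by a propagation argument along the Sturmian blocks. Combining Part~1 inside each block $[t_k+1,t_{k+1}-1]$ with \eqref{reciproque recurrence crochet def y_i} at the boundary index $i=t_{k+1}$ (where $\psi(t_{k+1})=t_k-1$), together with Cayley-Hamilton applied to $\by_{t_k}\by_{t_k-1}^{-1}$, linear dependence of $\by_{t_k-1},\by_{t_k},\by_{t_k+1}$ should propagate forward to linear dependence of $\by_{t_{k+1}-1},\by_{t_{k+1}},\by_{t_{k+1}+1}$, while the inverted recurrence $\by_{\psi(i)}=\by_i\by_{i+1}^{-1}\by_i$ gives the analogous backward propagation. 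Iterating reduces the existence statement in (iii) to the base case $k=0$, where I would compute $\det[\by_{-2}|\by_{-1}|\by_0]$ directly from $\by_{-2}=\bw_0\transpose{N}$, $\by_{-1}=\bw_1 N$, and $\by_0=\bw_0\bw_1 N$: a coordinate expansion identifies this $3\times 3$ determinant as a nonzero scalar multiple (involving $\det(\bw_0)\det(\bw_1)\det(N)$) of $\det(\bw_0\bw_1-\bw_1\bw_0)$. Hence it vanishes precisely when $(\bw_k)$ is not admissible, which via (i)$\iff$(ii) completes the equivalence.
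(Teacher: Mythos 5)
Most of your plan is sound, and two pieces are genuinely different from the paper's route. Your Cayley--Hamilton argument for the first assertion ($\by_{i+1}=\Tr(M)\by_i-\det(M)\by_{i-1}$ with $M=\by_i\by_{i-1}^{-1}$) is correct and even more explicit than the paper's, which instead uses the identity $\det(\bx,\by,\bz)=-\det(\by)\Tr(J\bx\by^{-1}\bz)$ for symmetric $\bx,\by,\bz\in\GL(\bR)$, i.e.\ linear dependence is equivalent to symmetry of $\bx\by^{-1}\bz$. Your (i)$\iff$(ii) matches the paper. Your (ii)$\Rightarrow$(iv) via the commutative algebra $\bR[\bw_0,\bw_1]$ (of dimension $\leq 2$ because a non-scalar $2\times2$ matrix is non-derogatory) is correct and different: the paper instead deduces (iv) from (iii) holding at every $k$ together with the first part.

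The genuine gap is (iii)$\Rightarrow$(ii). Your propagation across a block boundary is asserted, not proved, and it is the crux: passing from dependence of $(\by_{t_{k+1}-1},\by_{t_{k+1}},\by_{t_{k+1}+1})$ to dependence of $(\by_{t_k-1},\by_{t_k},\by_{t_k+1})$ requires inverting $\by_{t_{k+1}+1}=\by_{t_{k+1}}\by_{t_k-1}^{-1}\by_{t_{k+1}}$, i.e.\ showing that the linear involution $X\mapsto \by_{t_{k+1}}\Adj(X)\by_{t_{k+1}}$ maps the plane spanned by $\by_{t_k},\by_{t_k+1}$ into itself, plus handling degenerate cases where consecutive $\by_i$ are proportional; none of this is in your sketch. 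Your base-case claim is also slightly off: a computation shows $\det(\by_{-2},\by_{-1},\by_0)=\pm\det(\by_0)\Tr(JN)$, whose \emph{square} (not the determinant itself) is a nonzero multiple of $\det(\bw_0\bw_1-\bw_1\bw_0)$; the vanishing statements still coincide, so this is repairable. The paper avoids all propagation: since $\by_{t_k+1}=\bw_k\by_{t_k}$ and $\by_{t_k-1}=\by_{\psi(t_{k+1})}=\bw_kN_{k+1}$, one gets $\by_{t_k}\by_{t_k+1}^{-1}\by_{t_k-1}=N_{k+1}$ for \emph{every} $k$, so by the trace criterion above, dependence at any single $t_k$ is directly equivalent to symmetry of $N$. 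I recommend adopting that mechanism; it collapses (iii)$\iff$(ii) to one line and makes (iv) immediate.
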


\begin{proof}
    Eq. $(2.1)$ of \cite{roy2004approximation} combine with $J\by J = -\det(\by)\by^{-1}$ (valid for all symmetric matrix $\by\in \GL(\bR)$) gives the identity $\det(\bx,\by,\bz) = -\det(\by)\Tr(J\bx \by^{-1}\bz)$ for all symmetric matrices $\bx,\by,\bz\in\GL(\bR)$ (also viewed as points in $\bR^3$). Since $\Tr(JA) = 0$ if and only if $A$ is symmetric, we obtain the following useful criterion, valid for each symmetric matrices $\bx,\by,\bz \in\GL(\bR)$:
    \begin{align}
    \label{{eq proof:lem: degenerate N symmetric}}
        \det(\bx,\by,\bz) = 0 \Leftrightarrow \textrm{$\bx \by^{-1}\bz$ is symmetric.}
    \end{align}
    Now, let $i\geq -1$ be an index not among the $t_k$. Then $\psi(i)=i-1$, and by \eqref{reciproque recurrence crochet def y_i} the matrix $\by_i\by_{i+1}^{-1}\by_{i-1} = \by_{i-1}\by_i^{-1}\by_{i-1}$ is symmetric. We deduce from \eqref{{eq proof:lem: degenerate N symmetric}} that $\det(\by_i,\by_{i+1},\by_{i-1})=0$. This proves the first part of our lemma.

    \ref{item:lem: degenerate N symmetric: item 1} $\Leftrightarrow$ \ref{item:lem: degenerate N symmetric: item 2} by \eqref{eq: lien w0w1 w1w0} and Proposition~\ref{prop: conditions equivalentes admissibilite}. We obtain \ref{item:lem: degenerate N symmetric: item 2} $\Leftrightarrow$ \ref{item:lem: degenerate N symmetric: item 3} by noticing that if $i=t_k$ with $k\geq 0$, then $\by_{i+1} = \bw_k\by_i$, $\by_{i-1} = \by_{\psi(t_{k+1})} = \bw_kN_{k+1}$, so that $\by_i\by_{i+1}^{-1}\by_{i-1} = N_{k+1}$, and $\det(\by_{t_k},\by_{t_k+1},\by_{t_k-1})=0$ if and only if $N$ is symmetric. Lastly, we get \ref{item:lem: degenerate N symmetric: item 3} $\Leftrightarrow$ \ref{item:lem: degenerate N symmetric: item 4} by combining the above combined with the first part of the lemma.
\end{proof}

We now prove that any sequence satisfying \eqref{reciproque recurrence crochet def y_i} comes from a $\psi$-Sturmian sequence.
This will play a crucial role in establishing the new characterization of the set $\Sturm$ in
Section~\ref{subsection: new characterization of Sturm}.

\begin{proposition}
\label{prop: reconstruction suite sturm}
Let $i_0\geq -2$ be an integer and let $(\bv_i)_{i\geq -2}$ be a sequence of symmetric matrices such that
$\det(\bv_i) \neq 0 $ for each $i\geq -2$, and
\begin{equation}
\label{y_psi == à [y_i,y_i,y_i+1]}
    \bv_{i+1} = \bv_i\bv_{\psi(i)}^{-1}\bv_i
\end{equation}
for each $i\geq 0$. Then, there are $N\in\GL(\bR)$ and a $\psi$-Sturmian sequence $(\bw_k)_{k\geq 0}$
with the following property. We have $(\bv_{-2},\bv_{-1},\bv_{0}):=(\bw_0\transpose{N},\bw_1N,\bw_1\bw_0\transpose{N})$,
and $(\bv_i)_{i\geq -2}$ is precisely the sequence $(\by_i)_{i\geq -2}$ associated to $(\bw_k)_{k\geq 0}$ and $N$
by Definition~\ref{Def (y_i)}. If moreover the space generated by $(\bv_i)_{i\geq -2}$ has dimension $3$,
then the sequence $(\bw_k)_{k\geq 0}$ is admissible.
\end{proposition}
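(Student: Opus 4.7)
The plan is to reconstruct $(\bw_0,\bw_1,N)$ explicitly from the three seed matrices $\bv_{-2},\bv_{-1},\bv_0$, then extend $(\bw_k)_{k\geq 0}$ via the Sturmian recurrence, and finally show that the resulting sequence $(\by_i)_{i\geq -2}$ coincides with $(\bv_i)_{i\geq -2}$ by induction. The three defining identities
\[
    \bw_0\transpose{N} = \bv_{-2}, \quad \bw_1 N = \bv_{-1}, \quad \bw_1\bw_0\transpose{N} = \bv_0,
\]
combined with the symmetry of the $\bv_i$, suggest setting
\[
    N := \bv_{-2}\bv_0^{-1}\bv_{-1}, \quad \bw_0 := \bv_0\bv_{-1}^{-1}, \quad \bw_1 := \bv_0\bv_{-2}^{-1}.
\]
All three matrices belong to $\GL(\bR)$ since the $\bv_i$ are invertible, and using $\transpose{N} = \bv_{-1}\bv_0^{-1}\bv_{-2}$ one checks the three identities by direct multiplication.

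Next, I would extend $(\bw_k)_{k\geq 0}$ through the Sturmian recurrence $\bw_{k+1} = \bw_k^{s_{k+1}}\bw_{k-1}$ to obtain a $\psi$-Sturmian sequence, and form the associated $(\by_i)_{i\geq -2}$ of Definition~\ref{Def (y_i)}. By the previous step, $\by_i = \bv_i$ for $i \in \{-2,-1,0\}$. Both sequences satisfy the recurrence \eqref{reciproque recurrence crochet def y_i}: for $(\by_i)$ this is \cite[Eq.~(3.4)]{poels2017exponents}, and for $(\bv_i)$ it is the standing hypothesis of the proposition. Since $\psi(i) < i$ for all $i\geq 0$, a straightforward induction on $i$ yields $\by_{i+1} = \by_i\by_{\psi(i)}^{-1}\by_i = \bv_i\bv_{\psi(i)}^{-1}\bv_i = \bv_{i+1}$, hence $\by_i = \bv_i$ for every $i\geq -2$.

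For the last assertion, assume that $(\bv_i)_{i\geq -2}$ spans a $3$-dimensional subspace of $\Mat(\bR)$. Then the same holds for $(\by_i)_{i\geq -2}$, and the equivalence \ref{item:lem: degenerate N symmetric: item 1}$\Leftrightarrow$\ref{item:lem: degenerate N symmetric: item 4} of Lemma~\ref{lem: degenerate N symmetric} gives the admissibility of $(\bw_k)_{k\geq 0}$. The only delicate point in the argument is finding the correct closed form for $\bw_0$, $\bw_1$ and $N$ in the first step; once these are in hand, everything reduces to routine verification and to the uniqueness of the sequence determined by the recurrence.
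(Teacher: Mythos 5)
Your construction is correct, and its first half coincides with the paper's: the three seed identities force $\bw_1=\bv_0\bv_{-2}^{-1}$, $N=\bv_{-2}\bv_0^{-1}\bv_{-1}$ and $\bw_0=\bv_0\bv_{-1}^{-1}$, and these are exactly the matrices the paper obtains ($N$ appears there as $\transpose{(\bv_{-1}\bv_{0}^{-1}\bv_{-2})}$, which equals yours by symmetry of the $\bv_i$), while the admissibility claim is settled in both cases by the equivalence \ref{item:lem: degenerate N symmetric: item 1}$\Leftrightarrow$\ref{item:lem: degenerate N symmetric: item 4} of Lemma~\ref{lem: degenerate N symmetric}. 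Where you genuinely diverge is in establishing $\by_i=\bv_i$: you invoke the recurrence \eqref{reciproque recurrence crochet def y_i} for the sequence of Definition~\ref{Def (y_i)} and conclude by uniqueness of the solution of $\bx_{i+1}=\bx_i\bx_{\psi(i)}^{-1}\bx_i$ with prescribed values at $i=-2,-1,0$ (legitimate, since $-2\leq\psi(i)<i$ and all terms are invertible, so strong induction applies), whereas the paper runs the argument in the opposite direction: it sets $\bw_k:=\bv_{t_k+1}\bv_{t_k}^{-1}$ for every $k$, derives from the hypothesis the block identities $\bv_{j+1}=\bw_k\bv_j$ and $\bv_j=\bw_k\bv_{\psi(j)}$, checks the Sturmian recurrence for $(\bw_k)_{k\geq 0}$, and then recovers the closed formula $\bv_{t_k+\ell}=\bw_k^{\ell+1}\bw_{k-1}N_k$ by identifying $N_k$ (Propositions~\ref{Reciproque Propriete sur y_k} and~\ref{prop: reconstruction w_k admissible}). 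Your route is shorter but outsources the combinatorial work to the cited identity \eqref{reciproque recurrence crochet def y_i}; the paper's forward verification re-proves that structure from scratch and produces the intermediate identities explicitly. One caveat applies equally to both arguments: the matching of the very first block (indices $-1\leq i<t_1$) is only clean under the convention $t_1=0$, i.e.\ $s_1=1$, which the paper's proof tacitly assumes when it writes $(t_0,t_1)=(-1,0)$; for $s_1\geq 2$ the formulas of Definition~\ref{Def (y_i)} and the recurrence \eqref{reciproque recurrence crochet def y_i} conflict at $i=t_1-1$, so this is an indexing issue inherited from the definition rather than a defect of your argument.
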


The last part of the proposition is implied by Lemma~\ref{lem: degenerate N symmetric}. The first part comes
from Proposition~\ref{prop: reconstruction w_k admissible} below.

\begin{proposition}
\label{Reciproque Propriete sur y_k}
Let $(\bv_i)_{i\geq -2}$ be as in Proposition~\ref{prop: reconstruction suite sturm}, and for each $k\geq 0$, set
$\bw_k := \bv_{t_k+1}\bv_{t_k}^{-1}$. Then, we have the following properties:
\begin{enumerate}[label=\rm(\roman*)]
\item $\bv_{t_k+\ell} = \bw_k^{\ell} \bv_{t_k} = \bw_k^{\ell+1} \bv_{\psi(t_k)}$ for $k\geq 1$ and $0\leq \ell \leq s_{k+1}$.
\label{Reciproque Propriete 1 sur y_k}
\smallskip
\item $\bv_{j+1} = \bw_k\bv_j$ for $k\geq 0$ and $t_k\leq j < t_{k+1}$.
\label{Reciproque Propriete 2 sur y_k}
\smallskip
\item $\bv_{j} = \bw_k\bv_{\psi(j)}$ for $k\geq 1$ and $t_k\leq j < t_{k+1}$.
\label{Reciproque Propriete 4 sur y_k}
\end{enumerate}
Moreover, the sequence $(\bw_k)_{k\geq 0}$ is a $\psi$-Sturmian sequence in $\GL(\bR)$.
\end{proposition}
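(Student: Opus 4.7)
My plan is to establish (ii) first by a short induction on $j$, then deduce (i) by iteration and (iii) by specialization of the defining recurrence, and finally combine the three identities to verify the Sturmian relation $\bw_{k+1} = \bw_k^{s_{k+1}}\bw_{k-1}$. Invertibility of each $\bw_k=\bv_{t_k+1}\bv_{t_k}^{-1}$ is immediate since $\det(\bv_i)\neq 0$ for every $i$.

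For (ii), I fix $k\geq 0$ and induct on $j$ in the range $t_k\leq j < t_{k+1}$. The base case $j=t_k$ is exactly the definition of $\bw_k$. For the inductive step, any $j$ with $t_k<j<t_{k+1}$ cannot be of the form $t_m$ (the $(t_m)$ are strictly increasing), so $\psi(j)=j-1$. The defining recurrence \eqref{y_psi == à [y_i,y_i,y_i+1]} then reads $\bv_{j+1}=\bv_j\bv_{j-1}^{-1}\bv_j$, and the induction hypothesis $\bv_j=\bw_k\bv_{j-1}$ gives $\bv_j\bv_{j-1}^{-1}=\bw_k$, hence $\bv_{j+1}=\bw_k\bv_j$.

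Iterating (ii) yields the first equality of (i), namely $\bv_{t_k+\ell}=\bw_k^\ell\bv_{t_k}$ for $0\leq\ell\leq s_{k+1}$. The second equality in (i) then reduces to the identity $\bv_{t_k}=\bw_k\bv_{\psi(t_k)}$, which is precisely the case $j=t_k$ of (iii). To prove (iii), I distinguish two cases. For $t_k<j<t_{k+1}$, we have $\psi(j)=j-1$ and the claim is just (ii). For $j=t_k$ with $k\geq 1$, we have $\psi(t_k)=t_{k-1}-1$, and the recurrence at $i=t_k$ reads
\[
    \bv_{t_k+1}=\bv_{t_k}\bv_{t_{k-1}-1}^{-1}\bv_{t_k}.
\]
Multiplying on the right by $\bv_{t_k}^{-1}$ gives $\bw_k=\bv_{t_k}\bv_{t_{k-1}-1}^{-1}$, hence $\bv_{t_k}=\bw_k\bv_{t_{k-1}-1}=\bw_k\bv_{\psi(t_k)}$, as wanted.

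Finally, to obtain $\bw_{k+1}=\bw_k^{s_{k+1}}\bw_{k-1}$ for $k\geq 1$, I combine the three identities. Since $\psi(t_{k+1})=t_k-1$, the recurrence at $i=t_{k+1}$ yields $\bw_{k+1}=\bv_{t_{k+1}+1}\bv_{t_{k+1}}^{-1}=\bv_{t_{k+1}}\bv_{t_k-1}^{-1}$. Part (i) at step $k$ with $\ell=s_{k+1}$ gives $\bv_{t_{k+1}}=\bw_k^{s_{k+1}}\bv_{t_k}$, and (ii) applied at step $k-1$ with $j=t_k-1$ (which lies in $[t_{k-1},t_k)$ since $s_k\geq 1$) gives $\bv_{t_k}\bv_{t_k-1}^{-1}=\bw_{k-1}$. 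Assembling, $\bw_{k+1}=\bw_k^{s_{k+1}}\bv_{t_k}\bv_{t_k-1}^{-1}=\bw_k^{s_{k+1}}\bw_{k-1}$. The only real subtlety in the whole argument is handling the backward jump $\psi(t_k)=t_{k-1}-1$ at the endpoints of each block $[t_k,t_{k+1})$; elsewhere $\psi(j)=j-1$ and the induction is transparent.
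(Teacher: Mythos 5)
Your proof is correct and follows essentially the same route as the paper's: the paper's telescoping chain $\bv_{j+1}\bv_j^{-1}=\bv_j\bv_{\psi(j)}^{-1}=\dots=\bv_{t_k+1}\bv_{t_k}^{-1}=\bw_k=\bv_{t_k}\bv_{\psi(t_k)}^{-1}$ is exactly your induction on $j$ within a block together with your treatment of the jump $\psi(t_k)=t_{k-1}-1$, and the derivation of $\bw_{k+1}=\bw_k^{s_{k+1}}\bw_{k-1}$ is identical. Your handling of the $k=0$ block is if anything slightly more careful than the paper's.
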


\begin{proof}
Since $(t_0,t_1)=(-1,0)$, the case $k=0$ of \ref{Reciproque Propriete 2 sur y_k} is trivial by definition of $\bw_0$. Let $k,j$ be integers with $k\geq 1$ and $t_k\leq j<t_{k+1}$. Recall that $\psi(j)=j-1$ for each $j$ with $t_k < j < t_{k+1}$, so that, using successively \eqref{y_psi == à [y_i,y_i,y_i+1]}, we find
\begin{equation*}
    \bv_{j+1}\bv_{j}^{-1} = \bv_{j}\bv_{\psi(j)}^{-1} = \dots = \bv_{t_k+1}\bv_{t_k}^{-1} = \bw_k = \bv_{t_k}\bv_{\psi(t_k)}^{-1},
\end{equation*}
which proves \ref{Reciproque Propriete 2 sur y_k} and \ref{Reciproque Propriete 4 sur y_k}. Assertion \ref{Reciproque Propriete 1 sur y_k} is a consequence of \ref{Reciproque Propriete 2 sur y_k} and \ref{Reciproque Propriete 4 sur y_k}.\\
Finally, by \eqref{y_psi == à [y_i,y_i,y_i+1]}, we have $\bw_{k+1} = \bv_{t_{k+1}}\bv_{\psi(t_{k+1})}^{-1} = (\bv_{t_{k}+s_{k+1}}\bv_{t_k}^{-1})(\bv_{t_k}\bv_{t_k-1}^{-1})$. Using \ref{Reciproque Propriete 1 sur y_k} and \ref{Reciproque Propriete 2 sur y_k}, we
find $\bw_{k+1}=\bw_k^{s_{k+1}}\bw_{k-1}$, hence the last part of the proposition.
\end{proof}

\begin{proposition}
\label{prop: reconstruction w_k admissible}
Let $(\bv_i)_{i\geq -2}$ be as in Proposition~\ref{prop: reconstruction suite sturm}, and define the $\psi$-Sturmian sequence $(\bw_k)_{k\geq 0}$ as in Proposition~\ref{Reciproque Propriete sur y_k}. Setting $N:=\transpose{(\bv_{-1}\bv_{0}^{-1}\bv_{-2})}$, we have $\bv_{-2}=\bw_0\transpose{N}$ and
\begin{equation}
    \label{eq: reconstruction y_i en fonction w_k et N}
    \bv_{t_k+\ell} = \bw_k^{\ell+1}\bw_{k-1}N_k\quad (k\geq 1 \textrm{ and } 0\leq \ell < s_{k+1}),
\end{equation}
where $N_k=N$ if $k$ is even, $N_k = \transpose N$ else.
\end{proposition}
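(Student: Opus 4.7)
The plan is to reduce the whole statement to proving the single identity
\[
    \bv_{\psi(t_k)} = \bw_{k-1}N_k \quad (k\geq 1).
\]
Indeed, for $k=1$ this reads exactly $\bv_{-2} = \bw_0\transpose N$ (since $\psi(t_1) = -2$ and $N_1 = \transpose N$), while for general $k$ Proposition~\ref{Reciproque Propriete sur y_k}\ref{Reciproque Propriete 1 sur y_k} gives $\bv_{t_k+\ell} = \bw_k^{\ell+1}\bv_{\psi(t_k)}$, so combining the two yields \eqref{eq: reconstruction y_i en fonction w_k et N}. The case $k=1$ is then immediate: by symmetry of $\bv_{-2},\bv_{-1},\bv_0$ one has $\transpose N = \bv_{-1}\bv_0^{-1}\bv_{-2}$, so with $\bw_0 = \bv_0\bv_{-1}^{-1}$ a one-line cancellation gives $\bw_0\transpose N = \bv_{-2}$.

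For the inductive passage I would proceed by steps of length two. Writing $P(k)$ for the identity \eqref{eq: reconstruction y_i en fonction w_k et N} at level $k$, assume $P(k)$ holds. Choosing $\ell = s_{k+1}-1$ yields
\[
    \bv_{t_{k+1}-1} = \bw_k^{s_{k+1}}\bw_{k-1}N_k = \bw_{k+1}N_k,
\]
and since $k$ and $k+2$ have the same parity, $N_k = N_{k+2}$. Because $\psi(t_{k+2}) = t_{k+1}-1$, this rewrites as $\bv_{\psi(t_{k+2})} = \bw_{k+1}N_{k+2}$, which is the identity at level $k+2$.

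The main obstacle is therefore the parallel base case $k=2$, necessary because the induction advances by two. To establish $\bv_{t_1-1} = \bw_1 N$ I would compute $\bw_1$ explicitly. Applying the recurrence~\eqref{y_psi == à [y_i,y_i,y_i+1]} at $i=t_1$ together with $\psi(t_1) = -2$ gives $\bv_{s_1} = \bv_{s_1-1}\bv_{-2}^{-1}\bv_{s_1-1}$, hence $\bw_1 = \bv_{s_1-1}\bv_{-2}^{-1}$ and therefore $\bw_1 N = \bv_{s_1-1}\bv_0^{-1}\bv_{-1}$. Iterating Proposition~\ref{Reciproque Propriete sur y_k}\ref{Reciproque Propriete 2 sur y_k} with $k=0$ over $-1 \leq j \leq s_1-2$ produces $\bv_{s_1-1} = \bw_0^{s_1}\bv_{-1}$; substituting $\bw_0 = \bv_0\bv_{-1}^{-1}$ then telescopes $\bw_0^{s_1}\bv_{-1}\bv_0^{-1}\bv_{-1}$ down to $\bw_0^{s_1-1}\bv_{-1}$, which by the same iteration equals $\bv_{s_1-2} = \bv_{t_1-1}$. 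The degenerate subcase $s_1=1$, where the equality becomes $\bv_{-1} = \bw_1 N$, is covered by the same cancellation.
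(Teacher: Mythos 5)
Your argument is correct, and it rests on the same reduction as the paper's proof: everything comes down to the single identity $\bv_{\psi(t_k)}=\bw_{k-1}N_k$ for $k\geq 1$, which is then fed into Proposition~\ref{Reciproque Propriete sur y_k}\ref{Reciproque Propriete 1 sur y_k}. Where you differ is in how that identity is propagated. The paper sets $N_{k+1}':=\bw_k^{-1}\bv_{\psi(t_{k+1})}=\bv_{t_k}\bv_{t_k+1}^{-1}\bv_{t_k-1}$ and uses the recurrence \eqref{y_psi == à [y_i,y_i,y_i+1]} together with the symmetry of the $\bv_i$ to get the one-step relation $N_{k+2}'=\transpose{N_{k+1}'}$, so that the single seed $N_1'=\transpose{N}$ settles all levels at once and $\bv_{-2}=\bw_0\transpose N$ drops out as the case $k=0$. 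You instead advance by two, deducing the identity at level $k+2$ from the already-established formula \eqref{eq: reconstruction y_i en fonction w_k et N} at level $k$ via $\bw_{k+1}=\bw_k^{s_{k+1}}\bw_{k-1}$; this avoids any transpose manipulation in the inductive step but forces the explicit second base case $\bv_{\psi(t_2)}=\bw_1N$, which you compute correctly (including the subcase $s_1=1$) from $\bw_1=\bv_{s_1-1}\bv_{-2}^{-1}$ and Proposition~\ref{Reciproque Propriete sur y_k}\ref{Reciproque Propriete 2 sur y_k}. The paper's transpose recursion is a bit more economical; your version makes the parity of $N_k$ transparent at the cost of that extra hand computation.
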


\begin{proof}
For each $k\geq 0$, set
\begin{equation}
\label{reciproque Def N_k EQ}
    N_{k+1}' := \bw_k^{-1}\bv_{\psi(t_{k+1})} = \bv_{t_k}\bv_{t_k+1}^{-1}\bv_{t_{k}-1}.
\end{equation}
By \eqref{y_psi == à [y_i,y_i,y_i+1]}, we have
\begin{align*}
    \bv_{t_{k+1}}\bv_{t_{k+1}+1}^{-1} = \bv_{\psi(t_{k+1})}\bv_{t_{k+1}}^{-1} \AND \bv_{t_{k+1}}^{-1}\bv_{t_{k+1}-1} = \cdots = \bv_{t_{k}+1}^{-1}\bv_{t_{k}},
\end{align*}
from which we deduce
\begin{align*}
    N_{k+2}' =  \bv_{t_{k+1}}\bv_{t_{k+1}+1}^{-1}\bv_{t_{k+1}-1} =
    \bv_{\psi(t_{k+1})}\bv_{t_{k+1}}^{-1}\bv_{t_{k+1}-1}= \bv_{t_{k}-1}\bv_{t_{k}+1}^{-1}\bv_{t_{k}} = \transpose{N_{k+1}'}.
\end{align*}
Since $\transpose{N} = N_1=N_1'$, it implies that $N_k=N_k'$ for each $k\geq 1$, and \eqref{reciproque Def N_k EQ}
provide the identity $\bv_{\psi(t_{k+1})} = \bw_kN_{k+1}$. With $k=0$, this gives $\bv_{-2} = \bw_0\transpose{N}$. More generally,
combined with Proposition~\ref{Reciproque Propriete sur y_k}, this yields  \eqref{eq: reconstruction y_i en fonction w_k et N}.
\end{proof}

\subsection{New key-identities} ~ \medskip
\label{subsection: fonction U}

Recall that $\bR^3$ is identified to the space of symmetric matrices of $\Mat(\bR)$, so that $X\wedge Y$ is well defined for any symmetric matrices $X,Y\in \Mat(\bR)$. We denote by $J$ the matrix defined as in \eqref{Eq def matrice J}. The goal of this section is to give another expression for the sequence $(\bz_i)_{i\geq -1}$ of Definition~\ref{Def (z_i)_i}. This will allow us to compute the exponents $\omega_2^*$ and $\homega_2^*$ of a Sturmian number in Section~\ref{subsection: exposants omega^*}. See the introduction and \eqref{eq intro: U version polynomial} for the motivation of the following definition.

\begin{definition}
We define the morphism $\UU:\Mat(\bR)\rightarrow\Mat(\bR)$ by
\[
    \UU\left(\left(\begin{array}{cc}  a & b \\ c & d \end{array}\right)\right) :=  \left(
    \begin{array}{cc}
        -c & a-d \\
        a-d & b
    \end{array}
    \right).
\]
\end{definition}

Note that for each $X\in\Mat(\bR)$, the matrix $\UU(X)$ is symmetric, $ U(\Adj(X)) = -U(X)$, and $\UU(X)=0$ if and only if $X$ is proportional
to $\Id$.

\begin{definition}
\label{def: def alternative y_i et z_i}
Let $(\bw_k)_{k\geq 0}$ be a $\psi$-Sturmian sequence in $\GL(\bR)$ and set $\bw_{-1} := \bw_0^{-1}\bw_1$. We associate to $(\bw_k)_{k\geq 0}$ two sequences $(\ba_i)_{i\geq -1}$ and $(\bb_i)_{i\geq -1}$ of symmetric matrices as follows. For $k,\ell\in\bN$ with $k\geq 0$ and $0\leq \ell <s_{k+1}$, we define $\bb_{t_k+\ell} = U(\bw_k^\ell\bw_{k-1})$ and
\begin{equation*}
    \ba_{t_k+\ell}=(-1)^{k+1}\bb_{t_{k+1}}\wedge\bb_{t_k+\ell}=(-1)^{k+1}U(\bw_k)\wedge U(\bw_k^\ell\bw_{k-1}).
\end{equation*}
\end{definition}

The main result of this section is the following.

\begin{proposition}
\label{Prop nouvelle expression y_i et z_i}
Let $(\bw_k)_{k\geq 0}$ be an admissible $\psi$-Sturmian sequence in $\GL(\bR)$. Then
\begin{equation*}
    \ba_i= \by_i \AND \bb_i=\det(N)^{-1} \bz_i
\end{equation*}
for each $ i\geq -1$, where $(\by_i)_{i\geq -2}$ and $(\bz_i)_{i\geq -1}$ are the sequences of symmetric matrices given by Definition~\ref{Def (y_i)} with $N= (\Id-\bw_1^{-1}\bw_0^{-1}\bw_1\bw_0)J$.
\end{proposition}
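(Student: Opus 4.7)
The plan is to first derive one central $2\times 2$ matrix identity and then apply it twice, once for each of the two equalities. The central identity is
\[
    \UU(X)\wedge \UU(Y)=(YX-XY)J\qquad (X,Y\in\Mat(\bR)),
\]
which comes from a direct coordinate-level calculation using the definition of $\UU$, the cross product formula in $\bR^3$, and the identification \eqref{eq: identification R^3 avec Mat(R)}; the key input is that $XY-YX$ is traceless, so that $(XY-YX)J$ is symmetric. I expect this identity to be the crux of the whole proof, since it converts polynomial-type data $\UU(\cdot)$ into commutator-type matrix products, which is precisely what the Sturmian recurrence controls.

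For the first assertion $\ba_i=\by_i$: applying the central identity with $X=\bw_k$ and $Y=\bw_k^\ell\bw_{k-1}$ and factoring out $\bw_k^\ell$, one reduces the desired equality to the claim
\[
    N_k=(-1)^k M_k,\qquad M_k:=(\Id-\bw_{k-1}^{-1}\bw_k^{-1}\bw_{k-1}\bw_k)J.
\]
I will prove this by induction on $k$. The base case $k=0$ is immediate from $\bw_{-1}=\bw_0^{-1}\bw_1$. For the inductive step, the Sturmian recurrence $\bw_{k+1}=\bw_k^{s_{k+1}}\bw_{k-1}$ yields the shift identity $\bw_k^{-1}\bw_{k+1}^{-1}\bw_k\bw_{k+1}=\bw_k^{-1}\bw_{k-1}^{-1}\bw_k\bw_{k-1}$ (a direct simplification), so that $M_{k+1}=(\Id-\bw_k^{-1}\bw_{k-1}^{-1}\bw_k\bw_{k-1})J$. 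Applying the transpose formula \eqref{eq fnct U pour 2nd expr wedge} to this expression gives $\transpose{M_{k+1}}=-M_k$, whence by induction $M_k=N$ for $k$ even and $M_k=-\transpose{N}$ for $k$ odd, which is precisely $(-1)^k N_k$.

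For the second assertion $\bb_i=\det(N)^{-1}\bz_i$: substitute the formula $\by_{t_k+\ell}=(-1)^{k+1}\UU(\bw_k)\wedge\UU(P)$ just obtained (with $P:=\bw_k^\ell\bw_{k-1}$) and $\by_{\psi(t_{k+1})}=\bw_kN_{k+1}$ from \eqref{Eq y_psi(k) = w_k-1} into the definition of $\bz_{t_k+\ell}$, then apply the triple cross product formula $\bu\wedge(\bv\wedge\bw)=(\bu\cdot\bw)\bv-(\bu\cdot\bv)\bw$. This reduces the problem to evaluating the two scalar products $(\bw_kN_{k+1})\cdot\UU(P)$ and $(\bw_kN_{k+1})\cdot\UU(\bw_k)$. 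I will compute them using the identity $\by\cdot\UU(X)=-\Tr(\by JX)$ (valid for any symmetric $\by$, by a direct coordinate check) together with the explicit formula $\bw_kN_{k+1}J=(-1)^k(\bw_k-\bw_{k-1}^{-1}\bw_k\bw_{k-1})$ issued from the first part.

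Cyclicity of the trace immediately gives $(\bw_kN_{k+1})\cdot\UU(P)=0$. For the second scalar product, Cayley--Hamilton together with $\Tr(AB)=\Tr(A)\Tr(B)-\det(A)\Tr(A^{-1}B)$ reduce $(\bw_kN_{k+1})\cdot\UU(\bw_k)$ to $\pm\det(\bw_k)(2-\Tr(C_k))$, where $C_k:=\bw_k^{-1}\bw_{k-1}^{-1}\bw_k\bw_{k-1}$. The closing observation is that $\Tr(C_k)$ is independent of $k$: the Sturmian recurrence forces $C_{k+1}=C_k^{-1}$ (a computation analogous to the earlier shift), and since $\det(C_k)=1$ one has $\Tr(C_k^{-1})=\Tr(C_k)$. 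Combined with $\det(N)=\det(\Id-C_1)\det(J)=2-\Tr(C_1)$, the triple cross product collapses to $\det(N)\UU(P)=\det(N)\bb_{t_k+\ell}$, as desired. I expect the main obstacle to be the careful bookkeeping of signs and parities of $k$ throughout these sign-sensitive identities.
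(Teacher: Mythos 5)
Your argument is correct. For the first equality $\ba_i=\by_i$ it is essentially the paper's own proof: your ``central identity'' is exactly \eqref{eq lien U et commutateur}, the factorization you perform reproduces assertion \ref{eq fnct U pour X^n*Y} of Proposition~\ref{Prop: proprietes de U}, and your induction $\transpose{M_{k+1}}=-M_k$ via the shift identity and \eqref{eq fnct U pour 2nd expr wedge} is the same closing step, merely written out more explicitly (the only cosmetic remark is that for $k=0$ the shift identity comes from the definition $\bw_{-1}=\bw_0^{-1}\bw_1$ rather than from the recurrence $\bw_{k+1}=\bw_k^{s_{k+1}}\bw_{k-1}$, but the conclusion $\transpose{M_1}=-M_0$ is unaffected). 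For the second equality you genuinely diverge. The paper first derives $\by_{t_k+\ell}\,\by_{\psi(t_{k+1})}^{-1}=\bw_k^\ell\bw_{k-1}$ from the commutation identity $\bw_k\bw_{k-1}N_k=\bw_{k-1}\bw_kN_{k+1}$ (imported from \cite[Proposition 3.4]{poels2017exponents}) and then applies the one-line identity $\UU(X\Adj(Y))=-X\wedge Y$ for symmetric $X,Y$, the constant $\det(N)^{-1}$ appearing as $\det(\bw_k)/\det(\by_{\psi(t_{k+1})})$. You instead expand $(\bw_kN_{k+1})\wedge\big(\UU(\bw_k)\wedge\UU(P)\big)$ by the vector triple product and evaluate the two scalar products through traces; I checked that $\by\cdot\UU(X)=-\Tr(\by JX)$, that $(\bw_kN_{k+1})\cdot\UU(P)=0$ by cyclicity, that $(\bw_kN_{k+1})\cdot\UU(\bw_k)=(-1)^k\det(\bw_k)\big(2-\Tr(C_k)\big)$, and that $\Tr(C_k)$ is constant because $C_{k+1}=C_k^{-1}$ with $\det(C_k)=1$, so the signs do collapse to $\bz_{t_k+\ell}=\big(2-\Tr(C_1)\big)\UU(P)=\det(N)\,\bb_{t_k+\ell}$. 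Your route is longer and more computational, but it is self-contained: it avoids both part \ref{enumi: Prop propriete 3} of Proposition~\ref{Prop: proprietes de U} and the appeal to the earlier paper for the commutation identity, at the cost of careful sign and trace bookkeeping.
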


Before proving this result, let us state some elementary identities satisfied by $U$. They can easily be obtained by a direct computation, details are left to the reader.

\begin{proposition}
    \label{Prop: proprietes de U}
    For each $X,Y\in\Mat(\bR)$, we have the following properties.
    \begin{enumerate}[label=\rm(\roman*)]
        \item $\UU(X)\wedge\UU(Y)=0$ if and only if $XY=YX$. More precisely
        \begin{equation}
        \label{eq lien U et commutateur}
            \UU(X)\wedge\UU(Y) = -(XY-YX)J.
        \end{equation}
        \item \label{eq fnct U pour X^n*Y} If $X,Y$ are inversible, then $\UU(X)\wedge\UU(Y) = -XY(\Id-Y^{-1}X^{-1}YX)J$.
        \smallskip
        \item \label{enumi: Prop propriete 3} If  $X,Y$ are symmetric, then $U(X\Adj(Y)) = -X\wedge Y$. 
    \end{enumerate}
\end{proposition}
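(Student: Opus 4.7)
The plan is to verify all three identities by direct matrix computation; the author already flags these as routine (``details are left to the reader''), so the strategy is simply to arrange the computation so each item drops out cleanly rather than to seek a conceptually deeper argument.

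For part (i), I would write $X=\bigl(\begin{smallmatrix} a & b\\ c & d\end{smallmatrix}\bigr)$ and $Y=\bigl(\begin{smallmatrix} a' & b'\\ c' & d'\end{smallmatrix}\bigr)$. Under the identification \eqref{eq: identification R^3 avec Mat(R)}, $\UU(X)$ corresponds to the triple $(-c,a-d,b)$ and $\UU(Y)$ to $(-c',a'-d',b')$. Applying the standard cross-product formula to these two triples produces an explicit symmetric matrix. In parallel, expand $XY-YX$; it is trace-zero with off-diagonal entries $(a-d)b'-(a'-d')b$ and $(a'-d')c-(a-d)c'$, and diagonal entry $bc'-b'c$. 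Right-multiplying by $J$ converts this trace-zero matrix into a symmetric matrix, and the three free entries match, term by term, the ones obtained from the cross product (up to the global sign). This proves $\UU(X)\wedge\UU(Y)=-(XY-YX)J$. The equivalence $\UU(X)\wedge\UU(Y)=0\iff XY=YX$ then follows because $J\in\GL(\bR)$, so $(XY-YX)J=0$ forces $XY=YX$.

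Part (ii) is a one-line consequence of (i): when $X,Y\in\GL(\bR)$, one has the algebraic identity
\[
XY(\Id-Y^{-1}X^{-1}YX)=XY-XYY^{-1}X^{-1}YX=XY-YX,
\]
so multiplying by $-J$ and substituting \eqref{eq lien U et commutateur} gives exactly the stated formula.

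For part (iii), I would again argue coordinatewise. Using $X=\bigl(\begin{smallmatrix} a & b\\ b & d\end{smallmatrix}\bigr)$, $Y=\bigl(\begin{smallmatrix} a' & b'\\ b' & d'\end{smallmatrix}\bigr)$, and $\Adj(Y)=\bigl(\begin{smallmatrix} d' & -b'\\ -b' & a'\end{smallmatrix}\bigr)$, compute the four entries of the product $X\Adj(Y)$, then feed them into $\UU$. The resulting symmetric matrix corresponds to the triple $(db'-bd',\,ad'-a'd,\,ba'-ab')$. On the other hand, viewing $X$ and $Y$ as the points $(a,b,d)$ and $(a',b',d')$ in $\bR^3$, the cross product $X\wedge Y$ is $(bd'-db',\,da'-ad',\,ab'-ba')$, whose negative coincides entry by entry with the triple just computed. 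Hence $\UU(X\Adj(Y))=-X\wedge Y$.

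There is no serious obstacle: the only risk is sign/transposition slips in the bookkeeping of (i) and (iii), so the real care goes into tracking how $J$ converts trace-zero matrices to symmetric ones, and into using $\Adj(Y)=-JYJ$ for symmetric $Y$ if one wants a slightly shorter derivation of (iii) that sidesteps the explicit four-entry product.
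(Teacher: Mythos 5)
Your proposal is correct and follows exactly the route the paper intends: the paper explicitly leaves these identities as direct computations, and your coordinate verifications of (i) and (iii) (with (ii) reduced to (i) via $XY(\Id-Y^{-1}X^{-1}YX)=XY-YX$) check out entry by entry, including the signs. Nothing further is needed.
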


It is also interesting to notice the following identities (although we will not need them in this paper)
\[
    U(XY)+U(YX)=\Tr(X)U(Y)+\Tr(Y)U(X),
\]
and
\begin{equation*}
    \UU(PXP^{-1})\wedge\UU(PYP^{-1}) = \det(P)^{-1} P \big(\UU(X)\wedge\UU(Y)\big)\transpose P,
\end{equation*}
valid for each $X,Y\in\Mat(\bR)$ and $P\in\GL(\bR)$. We get the first one by a direct computation, and the last one is a consequence of \eqref{eq lien U et commutateur}
and the equality $\Adj(P)J=J(\transpose{P})$.

\bigskip

\begin{proof}[Proof of Proposition~\ref{Prop nouvelle expression y_i et z_i}]
Let $k,\ell$ with $k\geq 0$ and $0\leq \ell <s_{k+1}$. We first prove the formula $\bb_i=\det(N)^{-1} \bz_i$ for each $i\geq -1$. We can derive from \eqref{eq: condition symetrie admissible} the general identity $\bw_{k}\bw_{k-1}N_k = \bw_{k-1}\bw_{k}N_{k+1}$ (see \cite[Proposition 3.4]{poels2017exponents}). Combined with \eqref{Eq Def y_i}, we obtain  $\by_{t_k+\ell} = \bw_k^{\ell}\bw_{k-1}\bw_{k}N_{k+1}$. On the other hand Eq. \eqref{Eq y_psi(k) = w_k-1} gives $\by_{\psi(t_{k+1})} = \bw_kN_{k+1}$, so that
\begin{align*}
    \by_{t_k+\ell}\by_{\psi(t_{k+1})}^{-1} = \bw_k^\ell\bw_{k-1}.
\end{align*}
Since $\by_{t_k+\ell}$ and $\by_{\psi(t_{k+1})}$ are symmetric, together with assertion~\ref{enumi: Prop propriete 3} of Proposition~\ref{Prop: proprietes de U} and \eqref{Eq Def y_i}, this yields
\begin{align*}
    \bb_{t_k+\ell} &= U(\bw_k^\ell\bw_{k-1}) = \det(\by_{\psi(t_{k+1})})^{-1}\by_{\psi(t_{k+1})}\wedge \by_{t_k+\ell} = \det(N)^{-1}\bz_{t_k+\ell}.
\end{align*}
Now we prove that $\ba_{t_k+\ell} = \by_{t_k+\ell}$. Assertion \ref{eq fnct U pour X^n*Y} of Proposition~\ref{Prop: proprietes de U} gives
\begin{align*}
    \ba_{t_k+\ell} = (-1)^{k+1}U(\bw_k)\wedge U(\bw_k^\ell\bw_{k-1})
    = (-1)^k\bw_k^{\ell+1}\bw_{k-1}(\Id-\bw_{k-1}^{-1}\bw_{k}^{-1}\bw_{k-1}\bw_{k})J.
\end{align*}
We conclude by noticing that $\bw_{k+1}^{-1}\bw_{k}^{-1}\bw_{k+1}\bw_{k} = \bw_{k-1}^{-1}\bw_{k}^{-1}\bw_{k-1}\bw_{k}$ combined with \eqref{eq fnct U pour 2nd expr wedge} implies
\begin{align*}
    (-1)^k(\Id-\bw_{k-1}^{-1}\bw_{k}^{-1}\bw_{k-1}\bw_{k})J = N_k.
\end{align*}
\end{proof}

\section{Estimates for Sturmian sequences of matrices}
\label{section: estimates for Sturmian sequences}

We keep the notation of Section~\ref{section: combinatorics of Sturmian seq} for $\us$ and $\psi=\psi_\us$. In §\ref{subsection: multi growth}, we establish a new simple criterion so that a given admissible $\psi$-Sturmian sequence $(\bw_k)_{k\geq 0}$ has multiplicative growth.
In §\ref{subsection: estimate contents}, we solve the delicate question (and essential for our study) of knowing how to control the content of $\bw_k$, assuming that $(\bw_k)_{k\geq 0}$ is defined over $\bQ$. Altogether with the results of the previous section, we finally establish a new characterization of $\Sturm(\us)$ in §\ref{subsection: new characterization of Sturm}.

\medskip

The next result will allow us to eliminate the degenerate situation where a $\psi$-Sturmian sequence is admissible with an antisymmetric matrix $N$.

\begin{lemma}
\label{lem: matrice N = J}
    Let $(\bw_k)_{k\geq 0}$ be a $\psi$-Sturmian sequence in $\GL(\bR)$ and $N\in\GL(\bR)$ be such that $\bw_0\transpose{N}$, $\bw_1N$ and $\bw_1\bw_0\transpose{N}$ are symmetric. Suppose that the sequence $(\by_i)_{i\geq -2}$ of symmetric matrices associated to $(\bw_k)_{k\geq 0}$ and $N$ as in Definition~\ref{Def (y_i)} converges projectively. Then $N$ is not    antisymmetric.
\end{lemma}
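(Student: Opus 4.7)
The plan is to argue by contradiction. Suppose $N\in\GL(\bR)$ is antisymmetric, so $N = \lambda J$ for some $\lambda\neq 0$; rescaling, we may take $N = J$. The elementary identity $\bw J + J\transpose{\bw} = \Tr(\bw) J$, valid for every $\bw\in\Mat(\bR)$ by a one-line $2\times 2$ calculation, translates the symmetry hypotheses on $\bw_0\transpose{N}$, $\bw_1 N$, $\bw_1\bw_0\transpose{N}$ into the three trace conditions
\[
    \Tr(\bw_0) = \Tr(\bw_1) = \Tr(\bw_1\bw_0) = 0.
\]
Combined with the $2\times 2$ identity $AB + BA = \Tr(AB)\Id$, valid whenever $\Tr(A) = \Tr(B) = 0$, this already forces $\bw_0\bw_1 = -\bw_1\bw_0$. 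Let $\mathcal{V}\subset\Mat(\bR)$ denote the $3$-dimensional subspace of traceless matrices; by Cayley--Hamilton, every $A\in\mathcal{V}$ satisfies $A^2 = -\det(A)\Id$, so each power $A^n$ lies in $\bR\cdot\Id \oplus \bR\cdot A$.

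The heart of the argument is an induction on $k\geq 0$ establishing three simultaneous properties: (a) $\bw_k\in\mathcal{V}$; (b) the consecutive pair $\bw_{k-1}, \bw_k$ anti-commutes (hence is not proportional); and (c) the projective class $[\bw_k]\in\Proj(\mathcal{V})$ lies in the three-element set
\[
    \mathcal{T} := \big\{[\bw_0],\,[\bw_1],\,[\bw_0\bw_1]\big\}.
\]
Properties (a) and (b) propagate under $\bw_{k+1} = \bw_k^{s_{k+1}}\bw_{k-1}$ because $\bw_k^{s_{k+1}}$ is a scalar (when $s_{k+1}$ is even) or a scalar multiple of $\bw_k$ (when $s_{k+1}$ is odd); for (c), a direct computation using $A^2\propto\Id$ and anti-commutation shows that $[AB]\in\mathcal{T}$ whenever $A, B$ are distinct elements of $\mathcal{T}$, and the inductive step follows since $[\bw_{k-1}]\neq[\bw_k]$ by (b). Moreover $\mathcal{T}$ consists of three \emph{distinct} projective points: for instance $[\bw_0] = [\bw_0\bw_1]$ would force $\bw_1\propto\Id$, contradicting $\bw_1\in\mathcal{V}\setminus\{0\}$.

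Now apply $\by_{t_k+\ell} = \bw_k^{\ell+1}\bw_{k-1}N_k$ with $N_k = \pm J$: the projective class $[\by_i J^{-1}]$ equals $[\bw_{k-1}]$ when $\ell$ is odd and $[\bw_k\bw_{k-1}]$ when $\ell$ is even, both of which belong to $\mathcal{T}$ by (c). Since right-multiplication by $J$ is a linear isomorphism from $\mathcal{V}$ onto the $3$-dimensional space of symmetric matrices, the sequence $[\by_i]$ takes at most three distinct projective values in $\Proj(\bR^3)$. As this projective space is Hausdorff and the target set is finite, the hypothesized projective convergence forces $[\by_i]$ to be eventually constant, equal to some $[L]$.

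It remains to rule out eventual constancy. If $s_{k+1}\geq 2$ for infinitely many $k$, select such a $k$ large enough that $[\by_{t_k}] = [\by_{t_k+1}] = [L]$; simplifying by $J^{-1}$ yields $[\bw_k\bw_{k-1}] = [\bw_{k-1}]$, hence $\bw_k\propto\Id$, contradicting $\bw_k\in\mathcal{V}\setminus\{0\}$. Otherwise $s_{k+1} = 1$ for all large $k$, so the recurrence becomes $\bw_{k+1} = \bw_k\bw_{k-1}$; the relation $[\by_{t_k}] = [\bw_{k+1}J]$ then shows that $[\bw_k]$ is eventually constant, so $\bw_{k+1} = \mu_k\bw_k$ for scalars $\mu_k$, and substituting into the recurrence gives $\bw_{k-1} = \mu_k\Id$, again absurd. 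The main difficulty is property (c) of the induction, which pins the projective trajectory of $(\bw_k)$ down to a finite set; everything else then reduces to a discreteness/pigeonhole argument.
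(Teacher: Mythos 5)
Your proof is correct and follows essentially the same strategy as the paper's: assuming $N=\lambda J$, one shows that the $\by_i$ are projectively confined to the three classes $[\bw_0J]$, $[\bw_1J]$, $[\bw_1\bw_0J]$ (via the anticommutation $\bw_0\bw_1=-\bw_1\bw_0$ and the fact that even powers of $\bw_0,\bw_1$ are scalar), and this is incompatible with projective convergence. The only real divergence is the endgame: the paper invokes Lemma~\ref{lem: degenerate N symmetric} (an antisymmetric $N$ is not symmetric, so $\by_{t_k-1},\by_{t_k},\by_{t_k+1}$ are linearly independent for every $k$), which forces all three classes to be accumulation points, whereas you rule out eventual constancy by a self-contained case analysis on the $s_k$; both are sound, yours trading a citation for a slightly longer pigeonhole argument.
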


\begin{proof}
By contradiction, suppose that $N$ is antisymmetric, and write $N=\alpha J$ with $\alpha\in\bR\setminus\{0\}$. Then, by \eqref{eq: lien w0w1 w1w0}, we have $\bw_0\bw_1=-\bw_1\bw_0 \neq \bw_1\bw_0$. We claim that $\bw_0^\ell$ (resp. $\bw_1^\ell$) is proportional to $\Id$ if $\ell$ is even, and $\bw_0$ (resp. $\bw_1$) if $\ell$ is odd. Indeed, since $J^{-1} = -J = \transpose{J}$, we have
$\alpha\bw_0=\by_{-2}J$ and $\alpha\bw_1=-\by_{-1}J$, and we conclude with the identity $\bx J\bx J = -\det(\bx) \Id$ valid for each symmetric matrix $\bx\in\GL(\bR)$. As a consequence, for any $i\geq -2$, the non-zero symmetric matrix $\by_i$ is proportional to either $\bw_0J$, $\bw_1J$ or $\bw_1\bw_0J$. Since by Lemma~\ref{lem: degenerate N symmetric} the points $\by_{t_k-1}$, $\by_{t_k}$, $\by_{t_k+1}$ are linearly independent for each $k\geq 0$, we deduce that projectively, the sequence $(\by_i)_{i\geq -2}$ has exactly three accumulation points, a contradiction.
\end{proof}

\subsection{Multiplicative growth property} ~ \medskip
\label{subsection: multi growth}

Showing the multiplicative growth of an admissible $\psi$-Sturmian sequence in $\GL(\bR)$ is difficult, partly because of the lack of control of the signs of the coefficients: opposite terms can cancel out. The proof of Lemma~5.1 of \cite{roy2007two} gives a useful criterion for showing the multiplicative growth if $\bw_0$ and $\bw_1$ are of a certain type. The examples given by Roy in \cite{roy2007two} (see also \cite[Section~8.1]{poels2017exponents}) satisfy this criterion and allow us to avoid the alluded difficulty (see also Example~2 of \cite{Roy2004SmallDegree} for an example of construction of extremal numbers which does not satisfies the criterion of \cite[Lemma~5.1]{roy2007two}). We establish a new condition under which an admissible $\psi$-Sturmian sequence has multiplicative growth. Recall that the matrix $J$ is defined by \eqref{Eq def matrice J}.

\begin{proposition}
\label{reciproque Prop w_k croissance multi}
Let $(\bw_k)_{k\geq 0}$ be an admissible $\psi$-Sturmian sequence in $\GL(\bQ)$ and let $(\by_i)_{i\geq -2}$ be the sequence of symmetric matrices associated to $(\bw_k)_{k\geq 0}$ by Definition~\ref{Def (y_i)}. If $(\by_i)_{i\geq -2}$ converges projectively to a point $\by =(1,\xi,\xi^2)$, where $\xi\in\bR$ is neither rational nor quadratic, then $(\bw_k)_{k\geq 0}$ has multiplicative growth.
\end{proposition}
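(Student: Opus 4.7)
The plan is to argue by contradiction. Suppose that $(\bw_k)_{k\geq 0}$ fails to have multiplicative growth. Negating the defining inequality and extracting, there are indices $k_n\to\infty$ and $\ell_n\in\{0,\ldots,s_{k_n+1}\}$ for which
\[
\frac{\norm{\bw_{k_n}^{\ell_n+1}\bw_{k_n-1}}}{\norm{\bw_{k_n}}\cdot\norm{\bw_{k_n}^{\ell_n}\bw_{k_n-1}}} \longrightarrow 0.
\]
Setting $V_n := \bw_{k_n}/\norm{\bw_{k_n}}$ and $Y_n := \bw_{k_n}^{\ell_n}\bw_{k_n-1}N_{k_n}/\norm{\bw_{k_n}^{\ell_n}\bw_{k_n-1}N_{k_n}}$ (which works uniformly in $\ell_n\geq 0$ using $\bw_{k_n}^0=\Id$, and whose denominator is comparable to $\norm{\bw_{k_n}^{\ell_n}\bw_{k_n-1}}$), the assumption translates, via Definition~\ref{Def (y_i)} and the identity $\bw_{k_n}\cdot\bw_{k_n}^{\ell_n}\bw_{k_n-1}N_{k_n}=\by_{t_{k_n}+\ell_n}$, into $\norm{V_n Y_n}\to 0$.

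Next I compactify. The sequence $(V_n)$ lies in the unit ball of $\Mat(\bR)$, so after passing to a further subsequence, I may assume $V_n\to V_\infty$ with $\norm{V_\infty}=1$, $Y_n\to Y_\infty$, and $N_{k_n+1}$ is constantly equal to some $N^*\in\{N,\transpose{N}\}$. Setting $\bv:=(1,\xi)^T$, the projective convergence of $(\by_i)$ to the point $\by=(1,\xi,\xi^2)$, identified with the symmetric matrix $\bv\bv^T$, forces $Y_\infty=c\,\bv\bv^T$ for some scalar $c\neq 0$. Passage to the limit in $\norm{V_n Y_n}\to 0$ then gives $V_\infty\bv\bv^T=0$, whence $V_\infty\bv=0$.

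The crucial additional step is a second identification of $V_\infty$. Using $\by_{\psi(t_{k+1})}=\bw_k N_{k+1}$ from equation~\eqref{Eq y_psi(k) = w_k-1} with $k=k_n$, one has $\by_{\psi(t_{k_n+1})}=\norm{\bw_{k_n}}\,V_n N^*$, and its projective convergence to $\bv\bv^T$ forces $V_\infty N^*=\lambda\,\bv\bv^T$ for some $\lambda\neq 0$, that is $V_\infty=\lambda\,\bv\bv^T(N^*)^{-1}$. Combined with $V_\infty\bv=0$, this forces $\bv^T(N^*)^{-1}\bv=0$. Writing $(N^*)^{-1}=\bigl(\begin{smallmatrix}p & q \\ r & s\end{smallmatrix}\bigr)\in\Mat(\bQ)$, this becomes the rational equation $s\xi^2+(q+r)\xi+p=0$. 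Since $\xi$ is neither rational nor quadratic, this forces $p=s=0$ and $r=-q$; hence $(N^*)^{-1}$ is a nonzero multiple of $J$, so $N^*$, and therefore $N$, is antisymmetric.

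This contradicts Lemma~\ref{lem: matrice N = J}, which guarantees that the projective convergence of $(\by_i)$ forces $N$ to be non-antisymmetric. The main obstacle is precisely the reconciliation of the two subsequential identifications of $V_\infty$ (one from the failure of multiplicative growth via $V_n Y_n\to 0$, one from the general projective convergence applied to the shifted index $\psi(t_{k_n+1})$), together with the decisive use of the non-quadraticity of $\xi$ to turn the vanishing of a rational quadratic form at $\xi$ into the forbidden antisymmetry of $N$.
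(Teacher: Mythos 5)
Your proof is correct. At bottom it uses the same three ingredients as the paper's argument: the projective convergence of $(\by_i)_{i\geq-2}$ to the rank-one symmetric matrix $\Xi$, the fact that a rational $2\times2$ matrix $B$ satisfying $\transpose{\bx}B\bx=0$ for the column vector $\bx$ with entries $1,\xi$ must lie in $\bQ J$ (this is where the non-quadraticity of $\xi$ enters), and Lemma~\ref{lem: matrice N = J} excluding antisymmetric $N$. The packaging differs, though: the paper first proves the uniform quantitative estimate $\norm{\by_{j_1}B_1\cdots\by_{j_N}B_N}\asymp\prod_k\norm{\by_{j_k}}$ for $B_k\in\Mat(\bQ)\setminus\bQ J$ (Proposition~\ref{Reciproque Prop ||prod y_jB_j || = prod ||y_j||}, resting on Lemma~\ref{Reciproque Lemme (prod UB_i) != 0 }, i.e.\ $\Xi B_N\cdots\Xi B_1\neq0$), and then reads off multiplicative growth from the factorization $\bw_k^{\ell+1}\bw_{k-1}=\by_{\psi(t_{k+1})}N_{k+1}^{-1}\by_{t_k+\ell-1}N_k^{-1}$; you instead run a compactness/contradiction argument, extracting a limit $V_\infty$ of the normalized $\bw_{k_n}$, getting $V_\infty\bx=0$ from the failure of growth and $V_\infty N^*=\lambda\,\Xi$ from projective convergence, and concluding that $N$ would have to be antisymmetric. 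Your route is more targeted and avoids stating the general product estimate; the paper's route yields a uniform, reusable bound. Two small points you should make explicit: for each fixed $k$ there are only finitely many admissible $\ell$ and every ratio $\norm{\bw_k^{\ell+1}\bw_{k-1}}/(\norm{\bw_k}\,\norm{\bw_k^{\ell}\bw_{k-1}})$ is strictly positive, so the failure of the lower bound really does force $k_n\to\infty$; and one further subsequence extraction is needed to fix the signs occurring in the two projective limits (that of $Y_n$ and that of $V_nN^*/\norm{V_nN^*}$).
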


Proposition~\ref{reciproque Prop w_k croissance multi} is a corollary of Proposition~\ref{Reciproque Prop ||prod y_jB_j || = prod ||y_j||} below.

\begin{lemma}
\label{Reciproque Lemme (prod UB_i) != 0 }
Let $\xi$ be a real number neither rational nor quadratic. Let $N$ be a positive integer and $B_1,\dots,B_N\in\Mat(\bQ)\setminus\big(\bQ J\big)$. Then $\Xi B_N\Xi B_{N-1}\dots \Xi B_1 \neq 0$, where $\Xi$ is the symmetric matrix corresponding to $(1,\xi,\xi^2)$.
\end{lemma}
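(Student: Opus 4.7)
The key structural observation is that $\Xi$ has rank one: writing $v:=\transpose{(1,\xi)}\in\bR^2$, we have $\Xi=v\transpose{v}$. Consequently, for any $B\in\Mat(\bR)$, one gets $\Xi B\Xi = v\transpose{v}Bv\transpose{v}=(\transpose{v}Bv)\,\Xi$, so the scalar $\transpose{v}Bv$ \emph{collapses} each interior occurrence of $\Xi$. A straightforward induction on $N$ then yields the telescoping identity
\[
    \Xi B_N\Xi B_{N-1}\cdots\Xi B_1 \;=\; \Big(\prod_{i=2}^{N}\transpose{v}B_iv\Big)\,\Xi B_1 .
\]
So the problem reduces to two independent non-vanishing assertions: (a) $\transpose{v}B_iv\neq 0$ for each $i\geq 2$, and (b) $\Xi B_1\neq 0$.

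For (a), I would write $B=\left(\begin{smallmatrix}a&b\\c&d\end{smallmatrix}\right)\in\Mat(\bQ)$ and compute
$\transpose{v}Bv = a+(b+c)\xi+d\xi^2$.
Since $\xi$ is neither rational nor quadratic, the three rational numbers $1,\xi,\xi^2$ are $\bQ$-linearly independent, so this quantity vanishes precisely when $a=d=0$ and $b+c=0$, i.e. when $B\in\bQ J$. The hypothesis $B_i\notin\bQ J$ therefore gives $\transpose{v}B_iv\neq 0$.

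For (b), note that $\Xi B_1=v(\transpose{v}B_1)$ vanishes iff $\transpose{v}B_1=0$ (because $v\neq 0$). The row $\transpose{v}B_1=(a+c\xi,\,b+d\xi)$ has rational entries only if $\xi\in\bQ$ (excluded) or the entries are zero, which forces $B_1=0$. But $0\in\bQ J$, so the assumption $B_1\notin\bQ J$ excludes $B_1=0$ and hence $\Xi B_1\neq 0$.

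Putting (a) and (b) together with the telescoping identity finishes the proof. I do not foresee any real obstacle here: the rank-one factorization $\Xi=v\transpose{v}$ trivializes the multiplicative structure, and the non-vanishing conditions are direct consequences of the fact that $1,\xi,\xi^2$ are linearly independent over $\bQ$, combined with the characterization of $\bQ J$ as the kernel of the quadratic form $B\mapsto\transpose{v}Bv$ on $\Mat(\bQ)$.
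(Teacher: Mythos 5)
Your proof is correct and follows essentially the same route as the paper: both exploit that $\Xi$ is rank one with image $\Span{\transpose{(1,\xi)}}$, reduce the product to the scalars $\transpose{v}B v=a+(b+c)\xi+d\xi^2$, and observe that this vanishes exactly when $B\in\bQ J$ because $1,\xi,\xi^2$ are linearly independent over $\bQ$. (Only cosmetic remarks: in (b) you mean the entries of $(a+c\xi,\,b+d\xi)$ must be \emph{zero}, not rational, and in fact (b) already follows from (a) applied to $B_1$.)
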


\begin{proof}
Since the image of $\Xi$ is equal to $\Span{\bx}$, where $\bx=\transpose(1,\xi)$, it suffices to prove that for any $B\in \Mat(\bQ)\setminus\big(\bQ J\big)$, the vector $\Xi B\bx \in \Span{\bx}$ is non-zero. Let $B\in \Mat(\bQ)$ and write
\[
    B = \left(\begin{array}{cc} a & b\\ c & d \end{array}\right).
\]
Then $\Xi B\bx = 0$ if and only if $a+(b+c)\xi+d\xi^2 = 0$. Since $\xi$ is neither rational nor quadratic, it is equivalent to $B\in \bQ J$.
\end{proof}

\begin{proposition}
\label{Reciproque Prop ||prod y_jB_j || = prod ||y_j||}
Let $(\bw_k)_{k\geq 0}$ and $(\by_i)_{i\geq -2}$ satisfying the hypotheses of Proposition~\ref{reciproque Prop w_k croissance multi}. Let $N$ be a positive integer and $\mathcal{B}$ be a finite subset of $\Mat(\bQ)\setminus\big(\bQ J\big)$. Then, there are a constant $c>0$ and an index $i_0$ which only depend on $\mathcal{B}$, $N$ and $\xi$, such that, for any indices $j_1,\dots,j_N\geq i_0$ and any matrices $B_{1},\dots,B_N\in\mathcal{B}$, we have
\begin{equation}
\label{Reciproque eq inter 1 Cor ||prod y_jB_j || = prod ||y_j||}
    c^{-1}\prod_{k=1}^N\norm{\by_{j_k}} \leq \norm{\by_{j_1}B_1\cdots\by_{j_N}B_N} \leq c\prod_{k=1}^N\norm{\by_{j_k}}.
\end{equation}
Furthermore, the sequence $(\bw_k)_{k\geq 0}$ has multiplicative growth.
\end{proposition}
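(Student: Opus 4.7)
My plan is to establish \eqref{Reciproque eq inter 1 Cor ||prod y_jB_j || = prod ||y_j||} by a continuity-compactness argument combined with Lemma~\ref{Reciproque Lemme (prod UB_i) != 0 }, and then to deduce the multiplicative growth by a judicious algebraic decomposition of $\by_{t_k+\ell}$.

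The upper bound in \eqref{Reciproque eq inter 1 Cor ||prod y_jB_j || = prod ||y_j||} is immediate from submultiplicativity of $\|\cdot\|$ on $\Mat(\bR)$, with constant depending only on $N$ and $\max_{B\in\mathcal B}\|B\|$. For the lower bound, I normalize by setting $\tilde\by_j := \by_j/\|\by_j\|$ and $\tilde\Xi := \Xi/\|\Xi\|$; the projective convergence of $(\by_i)$ ensures $\tilde\by_j\to\pm\tilde\Xi$, with signs possibly alternating. Since the continuous function
\[
F(Y_1,\dots,Y_N;B_1,\dots,B_N) := \|Y_1B_1Y_2B_2\cdots Y_NB_N\|
\]
is invariant under the symmetries $Y_k\mapsto -Y_k$, it suffices to prove that $F\geq\epsilon$ on a neighborhood of $(\tilde\Xi,\dots,\tilde\Xi)$ in the compact space of symmetric unit-norm matrices, uniformly over $(B_i)\in\mathcal B^N$. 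The left-to-right analogue of Lemma~\ref{Reciproque Lemme (prod UB_i) != 0 }, which follows from the same argument via the telescoping identity $\Xi B\Xi = (a+(b+c)\xi+d\xi^2)\Xi$ for $B=\bigl(\begin{smallmatrix}a&b\\c&d\end{smallmatrix}\bigr)$, shows that $F(\tilde\Xi,\dots,\tilde\Xi;B_1,\dots,B_N)>0$ whenever each $B_i\notin\bQ J$. Finiteness of $\mathcal B^N$ and continuity of $F$ in the matrix variables then furnish a uniform lower bound $\epsilon>0$ and an index $i_0$ such that the estimate holds for all $j_k\geq i_0$.

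For the multiplicative growth of $(\bw_k)_{k\geq 0}$, I will exploit the identity
\[
\by_{t_k-1} = \bw_k\transpose{N_k} \quad (k\geq 1),
\]
obtained by applying the defining formula $\by_{t_{k-1}+\ell}=\bw_{k-1}^{\ell+1}\bw_{k-2}N_{k-1}$ at $\ell=s_k-1$, together with the Sturmian recurrence $\bw_k=\bw_{k-1}^{s_k}\bw_{k-2}$ and the parity relation $N_{k-1}=\transpose{N_k}$. It gives $\|\by_{t_k-1}\|\asymp\|\bw_k\|$ and, rewritten as $\bw_k=\by_{t_k-1}(\transpose{N_k})^{-1}$, yields the decomposition
\[
\by_{t_k+\ell} = \bw_k\cdot\bw_k^\ell\bw_{k-1}N_k = \by_{t_k-1}\,(\transpose{N_k})^{-1}\,\by_{t_k+\ell-1}
\]
for $1\leq\ell\leq s_{k+1}$, with the analogue $\by_{t_k}=\by_{t_k-1}(\transpose{N_k})^{-1}\by_{\psi(t_k)}$ when $\ell=0$ (invoking $\by_{\psi(t_k)}=\bw_{k-1}N_k$ from \eqref{Eq y_psi(k) = w_k-1}). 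Taking the finite set $\mathcal B := \{\Id, N^{-1}, (\transpose N)^{-1}\}$, whose elements all lie in $\Mat(\bQ)\setminus\bQ J$ because Lemma~\ref{lem: matrice N = J} combined with the projective convergence of $(\by_i)$ forbids $N$, and therefore $\transpose N$, from being antisymmetric, I apply the first part of the proposition with $N=2$, $j_1=t_k-1$, $j_2=t_k+\ell-1$ (or $\psi(t_k)$ when $\ell=0$), $B_1=(\transpose{N_k})^{-1}$, $B_2=\Id$. This yields $\|\by_{t_k+\ell}\|\asymp\|\by_{t_k-1}\|\|\by_{t_k+\ell-1}\|$ (respectively $\|\by_{t_k-1}\|\|\by_{\psi(t_k)}\|$), which combined with the norm comparisons $\|\by_{t_k+\ell}\|\asymp\|\bw_k^{\ell+1}\bw_{k-1}\|$, $\|\by_{t_k-1}\|\asymp\|\bw_k\|$, $\|\by_{t_k+\ell-1}\|\asymp\|\bw_k^\ell\bw_{k-1}\|$, and $\|\by_{\psi(t_k)}\|\asymp\|\bw_{k-1}\|$ delivers the multiplicative growth $\|\bw_k^{\ell+1}\bw_{k-1}\|\asymp\|\bw_k\|\|\bw_k^\ell\bw_{k-1}\|$.

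The main obstacle I anticipate is the lower bound in the first part, where one must simultaneously handle continuity of $F$ in its matrix arguments, finiteness of $\mathcal B^N$, and the potentially sign-alternating nature of the projective convergence. Once \eqref{Reciproque eq inter 1 Cor ||prod y_jB_j || = prod ||y_j||} is in hand, the multiplicative growth becomes nearly automatic, provided one recognizes $\by_{t_k-1}$ rather than the more tempting $\by_{\psi(t_k)}$ as the $\by$-index whose norm is proportional to $\|\bw_k\|$.
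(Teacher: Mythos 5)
Your proposal is correct and follows essentially the same route as the paper: the two-sided bound is obtained by normalizing the $\by_{j}$'s, using projective convergence to $\Xi$ together with Lemma~\ref{Reciproque Lemme (prod UB_i) != 0 } (whose proof is exactly your telescoping identity $\Xi B\Xi=(a+(b+c)\xi+d\xi^2)\Xi$) and finiteness of $\mathcal{B}^N$, and the multiplicative growth then follows by writing $\bw_k^{\ell+1}\bw_{k-1}$ as a product of two $\by_j$'s interleaved with the fixed matrices $N_k^{-1},(\transpose{N_k})^{-1}$ (noting, as the paper does via $\by_{\psi(t_{k+1})}=\by_{t_k-1}=\bw_kN_{k+1}$, that these are excluded from $\bQ J$ by Lemma~\ref{lem: matrice N = J}) and applying the first part with $N=2$. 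The only cosmetic differences are your normalization by $\norm{\by_j}$ with sign bookkeeping instead of by the first coordinate $y_{j,0}$, and the inclusion of $\Id$ in $\mathcal{B}$.
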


\begin{proof}
Since $N<+\infty$ and $\mathcal{B}$ is finite, the set of $N$-tuples of matrices $(B_1,\dots,B_N)\in\mathcal{B}^N$ is finite. We denote by $\Xi$ the symmetric matrix corresponding to $(1,\xi,\xi^2)$. By Lemma~\ref{Reciproque Lemme (prod UB_i) != 0 }, there is a constant $c_1>0$ (which depends only on $\mathcal{B}$, $N$ and $\xi$), such that
\begin{equation}
\label{Reciproque Lemme croissance multi eq inter 1}
    \frac{1}{c_1} \leq \norm{\Xi B_{1}\cdots \Xi B_{N}} \leq c_1
\end{equation}
for each $(B_1,\dots,B_N)\in\mathcal{B}^N$. Write $\by_i= \left(\begin{array}{cc} y_{i,0} & y_{i,1}\\ y_{i,1} & y_{i,2} \end{array}\right)$ for each $i\geq -2$.
By hypothesis $y_{i,0}^{-1}\by_i$ tends to $\Xi$, in particular $|y_{i_0}| \asymp \norm{\by_i}$. Fix $(B_1,\dots,B_N)\in\mathcal{B}$. From the above the matrix product
\[
    \Big(\prod_{k=1}^Ny_{j_k,0}\Big)^{-1}\by_{j_1}B_{1}\cdots \by_{j_N}B_{N}
\]
tends to $\Xi B_{1}\cdots \Xi B_{N}$ as $i$ tends to infinity, uniformly in $j_1,\dots,j_N\geq i$. Thus, by \eqref{Reciproque Lemme croissance multi eq inter 1}, there exist a constant $c>0$ and an index $i_0$ such that Eq. \eqref{Reciproque eq inter 1 Cor ||prod y_jB_j || = prod ||y_j||} holds for each $j_1,\dots,j_N\geq i_0$. Since $\mathcal{B}^N$ is finite, we may suppose that this estimate is satisfied for all $(B_1,\dots,B_N)\in\mathcal{B}^N$, which ends the proof of \eqref{Reciproque eq inter 1 Cor ||prod y_jB_j || = prod ||y_j||}. We now prove that $(\bw_k)_{k\geq 0}$ has multiplicative growth. Fix $k,\ell$ with $k\geq 1$ and $0\leq \ell \leq s_{k+1}$. By \eqref{Eq y_psi(k) = w_k-1}, we have $\bw_k=\by_{\psi(t_{k+1})}N_{k+1}^{-1}$ and $\bw_k^{\ell}\bw_{k-1} = \by_{t_k+\ell-1}N_k^{-1}$ if $\ell > 0$. In particular  $\norm{\bw_k}\asymp \norm{\by_{\psi(t_{k+1})}}$ and $\norm{\bw_k^\ell\bw_{k-1}}\asymp \norm{\by_{t_k+\ell-1}}$ if $\ell > 0$. Moreover, the matrices $N^{-1}$ and $\transpose{N}^{-1}$ are not proportional to $J$ according to Lemma~\ref{lem: matrice N = J}. By writing
\begin{align*}
       \bw_{k}^{\ell+1}\bw_{k-1} = \bw_k(\bw_k^{\ell}\bw_{k-1}) = \left\{ \begin{array}{ll}
            \by_{\psi(t_{k+1})}N_{k+1}^{-1}\by_{t_k+\ell-1}N_k^{-1} & \textrm{if $\ell > 0$},\\
            \by_{\psi(t_{k+1})}N_{k+1}^{-1}\by_{\psi(t_{k})}N_{k}^{-1} & \textrm{if $\ell = 0$},
        \end{array} \right.
\end{align*}
and by using \eqref{Reciproque eq inter 1 Cor ||prod y_jB_j || = prod ||y_j||}, we conclude easily that $\norm{\bw_k^{\ell+1}\bw_{k-1}}\asymp \norm{\bw_k}\norm{\bw_k^{\ell}\bw_{k-1}}$.
\end{proof}

We can deduce from the proofs of \cite[Proposition~6.1 and Proposition~6.5]{poels2017exponents} the following result (note that in \cite{poels2017exponents}
we suppose that $\bw_k\in\Mat(\bZ)$, but this hypothesis is not needed to get the estimates of our proposition).

\begin{proposition}
    \label{prop: estimations pour suites y_i et z_i}
    Let $(\bw_k)_{k\geq 0}$ be an unbounded admissible $\psi$-Sturmian sequence in $\GL(\bR)$ with multiplicative growth, and denote by $(\by_i)_{i\geq -2}$ and $(\bz_i)_{i\geq -1}$ the associated sequences of symmetric matrices (see Definition~\ref{Def (y_i)}). Suppose that there exists $\delta < 2$ such that $|\det(\bw_k)| \ll \norm{\bw_k}^{\delta}$ for each $k\geq 0$. Then, the sequence $(\by_i)_{i\geq -2}$ converges projectively to a point $\Xi=(1,\xi,\xi^2)$, and
    \begin{align*}
        \norm{\by_i\wedge\Xi} \asymp \frac{|\det(\by_i)|}{\norm{\by_i}}, \quad \norm{\bz_i}\asymp \norm{\by_{\psi(i)}} \AND |\bz_i\cdot \Xi| \asymp \frac{|\det(\by_i)|}{\norm{\by_{i+1}}}.
    \end{align*}
\end{proposition}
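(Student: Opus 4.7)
My approach is to reduce the proposition to Propositions~6.1 and~6.5 of~\cite{poels2017exponents}, which establish essentially the same estimates under the additional hypothesis $\bw_k\in\Mat(\bZ)$. The task is then to verify that this integrality assumption is nowhere actually used in those proofs, so that only admissibility, multiplicative growth, and the determinantal bound $|\det(\bw_k)|\ll\norm{\bw_k}^\delta$ with $\delta<2$ remain. In what follows I describe the steps in the order I would carry them out, and indicate which use of integrality must be replaced.

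First I would handle the projective convergence of $(\by_i)$. Multiplicative growth combined with~\eqref{Eq Def y_i} gives $\norm{\by_{t_k+\ell}} \asymp \norm{\bw_k}^{\ell+1}\norm{\bw_{k-1}}$, so $\norm{\by_i}\to\infty$ by unboundedness of $(\bw_k)_{k\geq 0}$. Iterating the recurrence~\eqref{reciproque recurrence crochet def y_i} and applying $|\det(\bw_k)|\ll\norm{\bw_k}^\delta$ with $\delta<2$ then yields $|\det(\by_i)|/\norm{\by_i}^2\to 0$. Since the $\by_i$ are symmetric and, by Lemma~\ref{lem: degenerate N symmetric} and admissibility, span a three-dimensional $\bR$-subspace of $\Mat(\bR)$, their projective class must converge to a real rank-one symmetric matrix, i.e.\ to a point of the form $\Xi=(1,\xi,\xi^2)$ for some $\xi\in\bR$.

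Next come the three estimates. For $\norm{\by_i\wedge\Xi}\asymp|\det(\by_i)|/\norm{\by_i}$: writing $\by_i=(a,a\xi+\epsilon_1,a\xi^2+\epsilon_2)$ with $a=y_{i,0}\asymp\norm{\by_i}$ and $\epsilon_1,\epsilon_2=o(a)$, a direct computation shows that both sides are comparable, up to constants depending only on $\xi$, to $\max(|\epsilon_1|,|\epsilon_2|)$; the irrationality of $\xi$ prevents the hidden linear combination from degenerating. For $\norm{\bz_i}\asymp\norm{\by_{\psi(i)}}$: Proposition~\ref{Prop nouvelle expression y_i et z_i} identifies $\bz_i$ with $\det(N)\cdot U(\bw_k^\ell\bw_{k-1})$ and $\by_{\psi(i)}$ with $\bw_k^\ell\bw_{k-1}N_k$ (for $i=t_k+\ell$), so the upper bound follows from $\norm{U(M)}\leq 2\norm M$. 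Finally, for $|\bz_i\cdot\Xi|\asymp|\det(\by_i)|/\norm{\by_{i+1}}$: using $\bz_i=\det(\bw_k)^{-1}\by_{\psi(t_{k+1})}\wedge\by_i$ together with $\bx\cdot(\by\wedge\bz)=\det(\by,\bz,\bx)$ rewrites $\bz_i\cdot\Xi$ as a $3\times 3$ determinant, which by~\eqref{reciproque recurrence crochet def y_i} and the first estimate can be brought to the stated asymptotic form.

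The hard part will be the lower bound $\norm{\bz_i}\gg\norm{\by_{\psi(i)}}$ in the middle estimate. This amounts to showing that the matrices $\bw_k^\ell\bw_{k-1}$ remain at a uniformly positive projective distance from the line $\bR\cdot\Id$ (on which $U$ vanishes). In~\cite{poels2017exponents} one could try to lean on discreteness to achieve this; without integrality, the control must come entirely from the quantitative projective convergence $\by_i\to\Xi$, exploiting that $\Xi$ corresponds to a rank-one symmetric matrix and hence is never proportional to $\Id$. Making this uniform in $k$ and $\ell$ is the one place where the argument of~\cite{poels2017exponents} truly needs to be reexamined, and verifying that it rests only on the rank-one nature of the limit, rather than on $\bw_k\in\Mat(\bZ)$, is the bulk of the work I would carry out.
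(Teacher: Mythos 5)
Your overall strategy is the same as the paper's, which offers no argument beyond the remark that the estimates follow from the proofs of Propositions~6.1 and~6.5 of \cite{poels2017exponents} once one observes that integrality of the $\bw_k$ is not used there. The difficulty is that the concrete arguments you substitute for two of the steps do not work, and they misplace where the real content lies. Writing $\by_i=(a,a\xi+\varepsilon_1,a\xi^2+\varepsilon_2)$, one does have $\norm{\by_i\wedge\Xi}\asymp\max(|\varepsilon_1|,|\varepsilon_2|)$, but $\det(\by_i)=a(\varepsilon_2-2\xi\varepsilon_1)-\varepsilon_1^2$, so $|\det(\by_i)|/\norm{\by_i}$ is comparable to $|\varepsilon_2-2\xi\varepsilon_1|$ up to $o(\max(|\varepsilon_1|,|\varepsilon_2|))$, \emph{not} to $\max(|\varepsilon_1|,|\varepsilon_2|)$. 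This yields only the classical upper bound $|\det(\by_i)|/\norm{\by_i}\ll\norm{\by_i\wedge\Xi}$; the reverse inequality, which is the substantive half of the first estimate, is equivalent to the non-degeneracy $\max(|\varepsilon_1|,|\varepsilon_2|)\ll|\varepsilon_2-2\xi\varepsilon_1|$, and irrationality of $\xi$ gives no such bound (all the more so as the $\by_i$ here are real, not integral points). That non-degeneracy is precisely what the recurrence $\by_{i+1}=\by_i\by_{\psi(i)}^{-1}\by_i$ is used for in \cite{poels2017exponents}: one computes $\by_i\wedge\by_{i+1}$ exactly, and a rapidly convergent sum over $j\geq i$ transfers matching upper and lower bounds to $\by_i\wedge\Xi$. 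The same summability estimate is what produces the projective convergence in the first place; your inference that $|\det(\by_i)|/\norm{\by_i}^2\to 0$ together with a three-dimensional span forces projective convergence to a rank-one point is not valid --- a sequence can satisfy both conditions while oscillating between distinct rank-one directions.

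On the other hand, your diagnosis of the one point where integrality could conceivably have mattered --- the lower bound $\norm{\bz_i}\gg\norm{\by_{\psi(i)}}$, i.e.\ keeping $\bw_k^{\ell}\bw_{k-1}$ projectively away from $\bR\,\Id$ --- is accurate, and the fix you outline is the right one: once projective convergence is known, $\bw_k^{\ell}\bw_{k-1}=\by_{\psi(t_k+\ell)}N_k^{-1}$ converges projectively along the two parities of $k$ to $\Xi N^{-1}$ and $\Xi\transpose{N}^{-1}$, which are nonzero of determinant zero and hence not proportional to $\Id$, so $\UU$ does not vanish on the limits; this is exactly the mechanism used in Lemma~\ref{lem: lemme racines}. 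But you leave this step, as well as the series arguments above, explicitly as work still to be carried out, so as written the proposal is a plan in which the two central estimates are unproved and one of them is supported by an incorrect computation.
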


\subsection{Estimates for the norms and the contents} ~ \medskip
\label{subsection: estimate contents}

Proposition \ref{Annexe Prop existence delta général} below generalizes, among others, the first part of Proposition~5.6 of \cite{poels2017exponents}. The growth of the contents \eqref{eq: det = norm puiss rho q_k} was originally proven in~\cite[Chapter~3]{poelsPhD} in a different way. This is one of the most delicate points. We are grateful to Damien Roy for pointing us out a much shorter proof than the original one. Recall that the sequence $\us$ is not necessarily bounded and that $\UU$ denotes the map introduced in Section~\ref{subsection: fonction U}. We define the sequence $(p_k)_{k\geq -1}$ by
\begin{equation}
    \label{eq def: suite q_k}
    (p_{-1},p_0) = (0,1) \AND p_{k+1} = s_{k+1}p_k+p_{k-1} \quad (k\geq 0).
\end{equation}

\begin{proposition}
    \label{Annexe Prop existence delta général}
    Let $(\bw_k)_{k\geq 0}$ be a $\psi$-Sturmian sequence in $\GL(\bR)$ with multiplicative growth such that $(\norm{\bw_k})_{k\geq 0}$ is unbounded. Then, there exist real numbers $\alpha,\beta$, with $\beta > 0$, such that
    \begin{equation}
        \label{eq: det, norme = puiss q_k}
        |\det(\bw_k)| \asymp e^{\alpha p_k}\AND \norm{\bw_k} \asymp e^{\beta p_k}=: W_k,
    \end{equation}
    as $k\geq 0$ tends to infinity. If $\bw_k$ is defined over $\bQ$ for each $k\geq 0$, then there is $\rho\in\bR$ such that
    \begin{equation}
        \label{eq: det = norm puiss rho q_k}
        \cont(\bw_k) = e^{p_k(\rho + o(1))},
    \end{equation}
    as $k$ tends to infinity. Suppose furthermore $(\bw_k)_{k\geq 0}$ admissible, and that either $\us$ is bounded or $\alpha = 2\rho$. Given $i=t_k+\ell$ with $k\geq 1$ and $0\leq \ell < s_{k+1}$, we define
    \begin{equation*}
        Y_i:= W_k^{\ell+1}W_{k-1} \AND Z_i = W_k^\ell W_{k-1}.
    \end{equation*}
    Then, as $i$ tends to infinity, we have $\norm{\bw_k^{\ell+1}\bw_{k-1}} = Y_i^{1+o(1)}$, $\norm{\bw_k^{\ell}\bw_{k-1}} = Z_i^{1+o(1)}$, as well as
    \begin{equation}
    \label{eq: prop estimations continues avec ell et UU}
        \cont(\bw_k^{\ell+1}\bw_{k-1}) = Y_i^{\rho/\beta + o(1)} \AND \cont(\UU(\bw_k^\ell\bw_{k-1})) = Z_i^{\rho/\beta + o(1)}.
    \end{equation}
\end{proposition}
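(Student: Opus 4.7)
The plan is to prove the four asymptotic estimates in turn, following the order in which they appear in the statement.

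\textbf{Steps 1 and 2: asymptotics of $|\det\bw_k|$, $\norm{\bw_k}$, and $\cont(\bw_k)$.} Taking determinants in $\bw_{k+1}=\bw_k^{s_{k+1}}\bw_{k-1}$ gives the exact linear recursion $u_{k+1}=s_{k+1}u_k+u_{k-1}$ for $u_k:=\log|\det\bw_k|$, while iterating the multiplicative growth hypothesis for $0\leq\ell\leq s_{k+1}$ produces $\norm{\bw_{k+1}}\asymp\norm{\bw_k}^{s_{k+1}}\norm{\bw_{k-1}}$, so $b_k:=\log\norm{\bw_k}$ satisfies the same recursion up to an $O(s_{k+1})$ error. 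The space of solutions of this recursion is spanned by $(p_k)$ and by the sequence $(p'_k)$ with $(p'_{-1},p'_0)=(1,0)$; the classical identity $p_kp'_{k-1}-p_{k-1}p'_k=\pm 1$ forces $p'_k/p_k\to 0$ and, via a standard lemma on positive linear recurrences (the bounded case appears as \cite[Proposition~5.6]{poels2017exponents} and the unbounded case is a mild extension), yields $u_k=\alpha p_k+O(1)$ and $b_k=\beta p_k+O(1)$ for some $\alpha,\beta\in\bR$, with $\beta>0$ from unboundedness. For the content, write $\bw_k=c_k\widetilde{\bw}_k$ with $\widetilde{\bw}_k$ primitive in $\Mat(\bZ)$; the recurrence becomes $c_{k+1}=c_k^{s_{k+1}}c_{k-1}\gamma_k$ with $\gamma_k:=\cont(\widetilde{\bw}_k^{s_{k+1}}\widetilde{\bw}_{k-1})$. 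The shortcut attributed to Roy consists in the elementary divisibility $\cont(M)^2\mid\det(M)$ for any $M\in\Mat(\bZ)$, applied both to $M=\widetilde{\bw}_k^{s_{k+1}}\widetilde{\bw}_{k-1}$ (to bound $\gamma_k$) and to $M=\widetilde{\bw}_k$ via $c_k^2\mid\det(\bw_k)$. This non-recursively couples $\log c_k$ to the already controlled $\log|\det\bw_k|=\alpha p_k+O(1)$; dividing the content recurrence by $p_{k+1}$ expresses $\log c_k/p_k$ as a convex combination of its previous two values up to a vanishing correction (since $s_{k+1}p_k/p_{k+1}+p_{k-1}/p_{k+1}=1$), and a Cauchy-type argument forces convergence to $\rho\in[0,\alpha/2]$, whence $\cont(\bw_k)=e^{p_k(\rho+o(1))}$.

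\textbf{Step 3: norms of the partial products.} For $i=t_k+\ell$ with $0\leq\ell<s_{k+1}$, iterating the multiplicative growth estimate $\ell+1$ times gives $\norm{\bw_k^{\ell+1}\bw_{k-1}}\asymp\norm{\bw_k}^{\ell+1}\norm{\bw_{k-1}}\asymp W_k^{\ell+1}W_{k-1}=Y_i$, with an implicit constant of size $e^{O(\ell+1)}$. The key observation is that $i\to\infty$ forces $k\to\infty$ (bounded $k$ bounds $\ell<s_{k+1}$, hence $i$), so $\log Y_i\geq(\ell+1)\beta p_k$ with $p_k\to\infty$ and the relative error $O(\ell+1)/\log Y_i=O(1/p_k)$ vanishes, giving $\norm{\bw_k^{\ell+1}\bw_{k-1}}=Y_i^{1+o(1)}$; the same reasoning applies to $Z_i$.

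\textbf{Step 4: contents of the partial products.} Factoring $\bw_k^{\ell+1}\bw_{k-1}=c_k^{\ell+1}c_{k-1}\,\widetilde{\bw}_k^{\ell+1}\widetilde{\bw}_{k-1}$ and using $\cont(\widetilde{\bw}_k^{\ell+1}\widetilde{\bw}_{k-1})^2\mid\det(\widetilde{\bw}_k)^{\ell+1}\det(\widetilde{\bw}_{k-1})$ yields the bound $\cont(\bw_k^{\ell+1}\bw_{k-1})\leq c_k^{\ell+1}c_{k-1}\cdot|\det\widetilde{\bw}_k|^{(\ell+1)/2}|\det\widetilde{\bw}_{k-1}|^{1/2}$. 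Under the hypothesis $\alpha=2\rho$, Step~2 gives $\log|\det\widetilde{\bw}_k|=\alpha p_k-2\log c_k+O(1)=o(p_k)$; combining this with $c_k^{\ell+1}c_{k-1}=Y_i^{\rho/\beta+o(1)}$ yields the first formula in \eqref{eq: prop estimations continues avec ell et UU}, and the bounded-$\us$ case is subsumed by the treatment in \cite{poels2017exponents}. For $\cont(\UU(\bw_k^\ell\bw_{k-1}))$, Proposition~\ref{Prop nouvelle expression y_i et z_i} provides the identity $\UU(\bw_k^\ell\bw_{k-1})=\det(N)^{-1}\bz_{t_k+\ell}=\det(N)^{-1}\det(\bw_k)^{-1}\,(\by_{\psi(t_{k+1})}\wedge\by_{t_k+\ell})$; since the wedge of two integer symmetric matrices is integral, the content of $\UU(\bw_k^\ell\bw_{k-1})$ is governed by $|\det(\bw_k)|^{-1}=e^{-2\rho p_k+O(1)}$ (using $\alpha=2\rho$) times the content of an integer wedge, and the asserted formula follows via the norm estimate of Step~3.

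\textbf{Main difficulty.} The central obstruction is Step~2: without Roy's quadratic divisibility $c_k^2\mid\det(\bw_k)$ the perturbation $\log\gamma_k$ in the content recurrence is not a priori $o(p_{k+1})$, and a direct estimate loops back on the unknown exponent $\rho$. The divisibility breaks this loop by providing a non-recursive link between $c_k$ and $\det(\bw_k)$, after which the convex-combination form of the recurrence and the Cauchy-type argument deliver the required convergence.
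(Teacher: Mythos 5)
Your overall architecture (determinant and norm via the linear recurrence in $p_k$, content via a super-multiplicative recurrence, then the partial products) matches the paper's, but the step you yourself identify as the central difficulty — convergence of $\log\cont(\bw_k)/p_k$ — is justified by a mechanism that does not work. You claim that the divisibility $\cont(M)^2\mid\det(M)$ makes the correction $\log\gamma_k/p_{k+1}$ vanish. It does not: it only gives $2\log\gamma_k\leq s_{k+1}\log|\det\bwt_k|+\log|\det\bwt_{k-1}|$, and $\log|\det\bwt_k|=\alpha p_k-2\log c_k+\GrO(1)$ is a priori only known to lie in $[0,\alpha p_k+\GrO(1)]$, so $\log\gamma_k$ can be of order $p_{k+1}$ (it is $o(p_{k+1})$ only if $\alpha=2\rho$, which is circular). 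Moreover, even a genuinely vanishing perturbation of a convex-combination recurrence does not force convergence. The paper's proof uses neither the divisibility nor a Cauchy argument here: the two inputs are $\log\gamma_k\geq 0$ (super-multiplicativity of contents, which you do have) and the a priori bound $\cont(\bw_k)\leq\norm{\bw_k}\ll e^{\beta p_k}$; these are combined through the explicit solution formula $r_k=\sum_i\ee_iq_k^{(i)}$ of Lemma~\ref{Lem: formule pratique avec les ee_k contenu etc}, the boundedness forcing $\sum_i\ee_ia_i<\infty$, and dominated convergence via the uniform bound $q_k^{(i)}/q_k^{(0)}\leq Ba_i$ of Lemma~\ref{lem:lemme sur les q_i}. (An alternative elementary route with the same two inputs: $m_k:=\min(r_k/p_k,r_{k-1}/p_{k-1})$ is non-decreasing and bounded, and since $s_{k+1}p_k/p_{k+1}\geq 1/2$ one rules out $\limsup>\liminf$; but some such argument must be supplied — your text does not contain one.)

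Two further gaps in Step 4. For $\cont(\UU(\bw_k^\ell\bw_{k-1}))$ your sentence does not compute anything: the content of the wedge $\by_{\psi(t_{k+1})}\wedge\by_{t_k+\ell}$ is not controlled by norms, and the $\by_i$ are not integer matrices. The paper instead sandwiches: $\cont(X)\leq\cont(\UU(X))$ and $\cont(\UU(\bw_k))\cont(\UU(\bw_k^\ell\bw_{k-1}))\leq\cont(\UU(\bw_k)\wedge\UU(\bw_k^\ell\bw_{k-1}))\asymp\cont(\bw_k^{\ell+1}\bw_{k-1})$ by Proposition~\ref{Prop nouvelle expression y_i et z_i}, which places the unknown content between $\cont(\bw_k^\ell\bw_{k-1})$ and $\cont(\bw_k^{\ell+1}\bw_{k-1})/\cont(\bw_k)$, both $Z_i^{\rho/\beta+o(1)}$. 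Finally, the bounded-$\us$ case of the first estimate in \eqref{eq: prop estimations continues avec ell et UU} cannot simply be "subsumed" by \cite{poels2017exponents} (which works with bounded contents); the paper closes it in one line via $c_{k+1}\geq c_k^{s_{k+1}-\ell-1}\cont(\bw_k^{\ell+1}\bw_{k-1})$. Your Steps 1 and 3 are essentially correct (modulo the inaccurate claim $p_k'/p_k\to 0$; the relevant fact is $|q_k^{(i)}-a_iq_k^{(0)}|=\GrO(1)$), and your Step 4 argument for the first estimate under $\alpha=2\rho$ is a valid variant of the paper's.
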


The proof of this result is at the end of this section. With that goal in mind, let us introduce for each $i\in\bN$ the sequence $(q^{(i)}_{k})_{k\in\bZ}$, defined by
\begin{equation*}
    q^{(i)}_{k} =
    \left\{
        \begin{array}{ll}
             0 & \textrm{if $k<i$} \\
             1 & \textrm{if $k=i$} \\
             s_kq^{(i)}_{k-1}+ q^{(i)}_{k-2} & \textrm{if $k> i$}.
        \end{array}
    \right.
\end{equation*}
Note that $(q^{(0)}_{k})_{k\geq -1}$ is the sequence $(p_k)_{k\geq -1}$ of
\eqref{eq def: suite q_k}. Moreover,
the theory of continued fractions (see for instance \cite[Chapter I]{schmidt1996diophantine})
ensures that for each $k\geq i > 0$ we have
\begin{equation}
    \label{eq: continued fraction xi_i}
    \frac{q^{(i-1)}_{k}}{q^{(i)}_{k}} = [s_{i};s_{i+1},\dots, s_k] \AND |q^{(i-1)}_{k} - \xi_i q^{(i)}_{k} |
    \leq \frac{1}{q^{(i)}_{k+1}},
\end{equation}
where $\xi_i:=[s_{i};s_{i+1},\dots]$. Note that the right-hand side of \eqref{eq: continued fraction xi_i} still holds (and is an equality) for $k=i-1$. Furthermore, $\xi_i\geq s_i \geq 1$ and $\xi_i=s_i+1/\xi_{i+1}$, hence
\begin{align}
    \label{eq: minoration x_ix_i+1}
    \xi_i\xi_{i+1} = s_i\xi_{i+1} + 1 \geq 2 \quad (i\geq 1).
\end{align}

\begin{lemma}
\label{lem:lemme sur les q_i}
For each $i\geq 0$ the quotient $q^{(i)}_{k}/q^{(0)}_{k}$ tends to $a_i:=1/(\xi_1\dots\xi_i)$ as $k$ tends to infinity, where $a_0=1$. Moreover, $\sum_{i\geq 0} s_ia_i < \infty$ and there exist $A,B > 0$ with the following properties.
\begin{enumerate}[label=\rm(\roman*)]
    \item \label{hyp 1 sur les q_i} $\displaystyle \sum_{i\geq 0} s_i|q^{(i)}_{k}-a_iq^{(0)}_{k}| \leq A$ for each $k\geq 0$;
    \smallskip
    \item \label{hyp ii sur les q_i} $q^{(i)}_{k}/q^{(0)}_{k} \leq Ba_i$ for each $k,i\geq 0$.
\end{enumerate}
\end{lemma}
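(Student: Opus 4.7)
My plan is to base everything on the shift identity for continued-fraction denominators
\[
q^{(0)}_k = q^{(0)}_i q^{(i)}_k + q^{(0)}_{i-1} q^{(i+1)}_k, \qquad 0\leq i\leq k,
\]
which I derive by splitting the product $M(s_1)\cdots M(s_k)=\bigl(M(s_1)\cdots M(s_i)\bigr)\cdot\bigl(M(s_{i+1})\cdots M(s_k)\bigr)$, where $M(s):=\bigl(\begin{smallmatrix} s & 1 \\ 1 & 0 \end{smallmatrix}\bigr)$: the entries of $M(s_1)\cdots M(s_k)$ are exactly $q^{(0)}_k,\,q^{(0)}_{k-1},\,q^{(1)}_k,\,q^{(1)}_{k-1}$ and similarly for the tail product. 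The convergence $q^{(i)}_k/q^{(0)}_k\to a_i$ then follows from the telescoping $q^{(0)}_k/q^{(i)}_k=\prod_{j=1}^i q^{(j-1)}_k/q^{(j)}_k$ together with \eqref{eq: continued fraction xi_i}, and passing to the limit in the shift identity produces the ``limit shift identity'' $1=q^{(0)}_i a_i + q^{(0)}_{i-1} a_{i+1}$ for $i\geq 0$, which will drive the remaining estimates. The convergence of $\sum_i s_i a_i$ is immediate from $s_i a_i\leq\xi_i a_i=a_{i-1}$ combined with the geometric decay $a_{i+1}\leq a_{i-1}/2$ extracted from \eqref{eq: minoration x_ix_i+1}.

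For \ref{hyp ii sur les q_i}, I drop the nonnegative term $q^{(0)}_{i-1} q^{(i+1)}_k$ in the shift identity to get $q^{(i)}_k/q^{(0)}_k\leq 1/q^{(0)}_i$, reducing the claim to $q^{(0)}_i a_i\geq 1/2$. Rewriting the limit shift identity as $q^{(0)}_i a_i = 1 - q^{(0)}_{i-1}a_{i+1}$ and bounding $q^{(0)}_{i-1}a_{i+1}\leq q^{(0)}_{i-1}a_{i-1}/2\leq 1/2$ (using $a_{i+1}\leq a_{i-1}/2$ and the inequality $q^{(0)}_{i-1}a_{i-1}\leq 1$ coming from the limit identity at level $i-1$) settles this with $B=2$.

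For \ref{hyp 1 sur les q_i}, I set $e_i(k):= q^{(i)}_k-a_i q^{(0)}_k$ and split the sum at $i=k$. A short manipulation of the shift identity and its limit form gives the recursion $q^{(0)}_{i-2}\,e_i(k)=-q^{(0)}_{i-1}\,e_{i-1}(k)$ for $i\geq 2$, hence $|e_i(k)|=q^{(0)}_{i-1}|e_1(k)|$ for all $1\leq i\leq k$. The head sum is then controlled by $|e_1(k)|\leq 1/(\xi_1 q^{(1)}_{k+1})$ (from \eqref{eq: continued fraction xi_i} at $i=1$), the telescoping $\sum_{i=1}^k s_i q^{(0)}_{i-1}=\sum_{i=1}^k(q^{(0)}_i-q^{(0)}_{i-2})\leq q^{(0)}_k+q^{(0)}_{k-1}$, and the trivial inequality $q^{(0)}_k\leq(s_1+1)q^{(1)}_{k+1}$ (which follows from $q^{(0)}_k=s_1 q^{(1)}_k + q^{(2)}_k$, $q^{(2)}_k\leq q^{(1)}_k$, $q^{(1)}_k\leq q^{(1)}_{k+1}$). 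For the tail $i>k$, $q^{(i)}_k=0$ forces $|e_i(k)|=a_i q^{(0)}_k$; then $s_i a_i q^{(0)}_k\leq a_{i-1}q^{(0)}_k\leq q^{(0)}_k/q^{(0)}_{i-1}$, and the doubling $q^{(0)}_{k+2}\geq 2q^{(0)}_k$ (consequence of $s_{k+2}\geq 1$) finally yields $q^{(0)}_k\sum_{j\geq k}1/q^{(0)}_j\leq 4$, uniformly in $k$.

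The hard part is the uniform lower bound $q^{(0)}_i a_i\geq 1/2$: it is the single estimate on which both \ref{hyp ii sur les q_i} and the tail in \ref{hyp 1 sur les q_i} depend uniformly in the (possibly unbounded) sequence $\us$, and its proof must exploit that $\xi_i\xi_{i+1}\geq 2$ propagates through two successive instances of the limit shift identity rather than one. Once this is in hand, the rest is purely bookkeeping with telescoping sums and the doubling of $q^{(0)}_k$ every two steps.
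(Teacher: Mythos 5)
Your proof is correct, but it follows a genuinely different route from the paper's. The paper expands the error directly as a telescoping sum, $a_iq^{(0)}_k-q^{(i)}_k=\sum_{j=1}^i(\xi_j\cdots\xi_i)^{-1}(q^{(j-1)}_k-\xi_jq^{(j)}_k)$, bounds each term by $(\xi_j\cdots\xi_i)^{-1}/q^{(j)}_{k+1}$ via \eqref{eq: continued fraction xi_i}, swaps the order of summation, and concludes from two geometric decays: $\sum_{i\geq j}s_i/(\xi_j\cdots\xi_i)\ll 1$ (from $\xi_i\xi_{i+1}\geq 2$) and $q^{(j)}_{k+1}\geq\gamma^{k-j}$; for \ref{hyp ii sur les q_i} it bounds each factor $[s_m;s_{m+1},\dots,s_k]$ below by $\xi_m(1-\gamma^{m-k})$ and invokes the convergent product $\prod_j(1-\gamma^{-j})$. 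You instead organize everything around the Euler continuant identity $q^{(0)}_k=q^{(0)}_iq^{(i)}_k+q^{(0)}_{i-1}q^{(i+1)}_k$ and its limit form $1=q^{(0)}_ia_i+q^{(0)}_{i-1}a_{i+1}$; subtracting the two gives the exact relation $q^{(0)}_{i-2}e_i(k)=-q^{(0)}_{i-1}e_{i-1}(k)$, hence $|e_i(k)|=q^{(0)}_{i-1}|e_1(k)|$, which collapses the double sum of the paper into a single telescoping sum, and dropping a nonnegative term gives $q^{(i)}_k/q^{(0)}_k\leq 1/q^{(0)}_i$, reducing \ref{hyp ii sur les q_i} to the uniform lower bound $q^{(0)}_ia_i\geq 1/2$, which you correctly extract from two consecutive limit identities and $\xi_i\xi_{i+1}\geq 2$. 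I checked the details (the range $1\leq i\leq k+1$ of the error recursion, the edge cases $i=0,1$, the tail $i>k$ where $e_i(k)=-a_iq^{(0)}_k$, and the doubling $q^{(0)}_{j+2}\geq 2q^{(0)}_j$) and they all hold. Your approach is more algebraic and yields clean explicit constants ($B=2$, and $A\leq 8$), at the cost of first establishing the shift identity; the paper's argument is more self-contained, using only the standard inequality $|q^{(i-1)}_k-\xi_iq^{(i)}_k|\leq 1/q^{(i)}_{k+1}$, but needs the extra lower bound $q^{(j)}_{k+1}\geq\gamma^{k-j}$, which plays the role of your doubling estimate.
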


\begin{proof}
Let $i\geq 0$. Then $q^{(i)}_{k}/q^{(0)}_{k}=a_i+o(1)$ as $k$ tends to infinity, since for each $j\geq 1$ the quotient $q^{(j-1)}_{k}/q^{(j)}_{k}$ tends to $\xi_j$. For each $a,b\in\bR$, we set $\delta_{a\leq b}=1$ if $a\leq b$, and $0$ else. Given $k \geq 0$, we have
\begin{align*}
    \big|a_iq_k^{(0)}-q_k^{(i)}\big|
    = \Big| \sum_{j=1}^i \frac{1}{\xi_j\cdots \xi_i}\big(q_k^{(j-1)}-\xi_jq_k^{(j)}\big) \Big|
    \leq \sum_{j \geq 1} \frac{\delta_{j\leq i}}{\xi_j\cdots \xi_i}\frac{\delta_{j\leq k+1}}{q_{k+1}^{(j)}},
\end{align*}
where the last estimate is obtained by noticing that the term in the first sum vanishes if $k<j-1$, and by using the upper bound given by \eqref{eq: continued fraction xi_i} for the indices $j$ with $j\leq k+1$. This yields
\begin{align}
\label{eq inter: preuve prop des q_i}
    \sum_{i \geq 0} s_i|a_iq^{(0)}_{k}-q^{(i)}_{k}| \leq
    \sum_{i \geq 0} \sum_{j \geq 1} s_i\frac{\delta_{j\leq i}}{\xi_j\cdots \xi_i}\frac{\delta_{j\leq k+1}}{q_{k+1}^{(j)}} =
    \sum_{j = 1}^{k+1} \frac{1}{q_{k+1}^{(j)}}\sum_{i \geq j} \frac{s_i}{\xi_j\cdots \xi_i}.
\end{align}
On the one hand, using the upper bound $s_i\leq \xi_i$ and \eqref{eq: minoration x_ix_i+1}, we get
\begin{align*}
    \sum_{i \geq j} \frac{s_i}{\xi_j\cdots \xi_i} \leq \sum_{i\geq j} \frac{1}{\xi_j\cdots \xi_{i-1}}
    \leq \sum_{i\geq j} \frac{1}{2^{\lfloor(i-j)/2\rfloor}} =  2 \sum_{k\geq 0} \frac{1}{2^{k}} = 1.
\end{align*}
Taking $j=1$ in the above, we get $\sum_{i \geq 1} s_ia_i \leq 1$. On the other hand, since the golden ratio $\gamma = (1+\sqrt 5)/2$ satisfies $\gamma^{j} = \gamma^{j-1}+\gamma^{j-2}$ for each $j\geq 0$, it is easy to check by induction that $q^{(j)}_{k+1} \geq \gamma^{k-j}$ for each $j\leq k+1$. Together with \eqref{eq inter: preuve prop des q_i}, we find
\begin{align*}
    \sum_{i \geq 0} s_i|a_iq^{(0)}_{k}-q^{(i)}_{k}| \leq \sum_{j=1}^{k+1} \frac{1}{\gamma^{k-j}}
    \leq \sum_{j\geq 0} \frac{1}{\gamma^{j-1}} =:A < +\infty,
\end{align*}
hence \ref{hyp 1 sur les q_i}. We now prove \ref{hyp ii sur les q_i}. We may assume that $k\geq i$ since $q_k^{(i)}=0$ if $k<i$. If $k > i$, then we have $[s_i;s_{i+1},\dots,s_k] \geq  \xi_i(1-\gamma^{i-k})$, since and $\xi_i\geq 1$ and $| [s_i;s_{i+1},\dots,s_k] - \xi_i| \leq 1/q^{(i)}_{k+1} \leq \gamma^{i-k}$ by \eqref{eq: continued fraction xi_i}. If $k=i$, we simply have $[s_i;\dots,s_k] = s_i \geq \xi_i/2$. Combining these lower bounds, we find
\begin{align*}
    \frac{q^{(i)}_{k}}{q^{(0)}_{k}} = \frac{1}{[s_1;s_{2},\dots,s_k]\cdots[s_i;s_{i+1},\dots,s_k]} \leq \frac{B}{\xi_1\cdots\xi_i},\quad \textrm{where } B:= \frac{2}{\prod_{j\geq 1}(1-\gamma^{-j})} < \infty.
\end{align*}
\end{proof}

\begin{lemma}
    \label{Lem: formule pratique avec les ee_k contenu etc}
     Let $(r_k)_{k\geq 0}$ be a sequence of real numbers. Set $\ee_k:= r_k - s_kr_{k-1}-r_{k-2}$ for each $k\geq 0$, with $(r_{-2},r_{-1})=(0,0)$.
    Then
    \begin{equation}
    \label{eq: key identity pour preuve rec asymp}
        r_k = \sum_{i=0}^k \ee_i q^{(i)}_k \qquad (k \geq -2), 
    \end{equation}
    with the convention that the sum on the right-hand side is equal to $0$ if $k < 0$.
\end{lemma}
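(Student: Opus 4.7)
The plan is a straightforward induction on $k$, driven by the defining recurrence of the auxiliary sequences $(q_k^{(i)})_{k\in\bZ}$. The whole identity is essentially a change-of-basis formula expressing $r_k$ in terms of the ``errors'' $\ee_i$ via the solutions of the homogeneous version of the recurrence, so no real obstacle is anticipated.

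First I would handle the base cases. For $k=-2$ and $k=-1$, both sides vanish by convention (the sum is empty and $r_{-2}=r_{-1}=0$). For $k=0$, one has $\ee_0 = r_0 - s_0 r_{-1} - r_{-2} = r_0$ and $q_0^{(0)}=1$, so $\sum_{i=0}^0 \ee_i q_0^{(i)} = \ee_0 = r_0$, giving \eqref{eq: key identity pour preuve rec asymp}.

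For the inductive step, fix $k \geq 1$ and assume \eqref{eq: key identity pour preuve rec asymp} for $k-1$ and $k-2$. Using the definition of $\ee_k$, we have $r_k = \ee_k + s_k r_{k-1} + r_{k-2}$. Substituting the inductive hypothesis,
\begin{align*}
    s_k r_{k-1} + r_{k-2}
        &= \sum_{i=0}^{k-1} \ee_i \bigl(s_k q_{k-1}^{(i)} + q_{k-2}^{(i)}\bigr),
\end{align*}
where we used the convention $q_{k-2}^{(k-1)}=0$ to align the ranges of summation. For every index $i$ with $0\leq i \leq k-1$ we have $k > i$, and so the recurrence defining $(q_m^{(i)})_{m\in\bZ}$ yields $s_k q_{k-1}^{(i)} + q_{k-2}^{(i)} = q_k^{(i)}$. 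Combined with $q_k^{(k)} = 1$, this gives
\begin{align*}
    r_k
        = \ee_k q_k^{(k)} + \sum_{i=0}^{k-1} \ee_i q_k^{(i)}
        = \sum_{i=0}^{k} \ee_i q_k^{(i)},
\end{align*}
which completes the induction and hence the proof of \eqref{eq: key identity pour preuve rec asymp}. The only point requiring a bit of care is bookkeeping the boundary indices (namely $q_{k-2}^{(k-1)}=0$ and $q_k^{(k)}=1$), so that the two partial sums combine cleanly into a single sum up to $k$.
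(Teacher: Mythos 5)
Your proof is correct and follows essentially the same induction as the paper's, down to the same two boundary observations ($q^{(k-1)}_{k-2}=0$ to align the sums and $q^{(k)}_k=1$ to absorb the $\ee_k$ term); the paper simply folds your $k=0$ base case into the inductive step, where the empty sum makes it work automatically.
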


\begin{proof}
    We prove Eq. \eqref{eq: key identity pour preuve rec asymp} by induction on $k\geq -2$.
    It trivially holds for $k=-2$ and $k=-1$. Suppose now that \eqref{eq: key identity pour preuve rec asymp}
    is satisfied for $-2,-1,\dots,k-1$, where $k$ is a given integer $\geq 0$. Then, since $q^{(k)}_k=1$
    and $q^{(k-1)}_{k-2}=0$, we obtain
    \begin{align*}
        r_k - \ee_kq^{(k)}_k = s_kr_{k-1}+r_{k-2} &= \sum_{i=0}^{k-1} \ee_i(s_kq^{(i)}_{k-1} + q^{(i)}_{k-2}) = \sum_{i=0}^{k-1} \ee_iq^{(i)}_k.
    \end{align*}
\end{proof}

\begin{lemma}
    \label{Lem: croissance multi implique croissance en cst*q_0}
    Let $(r_k)_{k\geq 0}$ and $(\ee_k)_{k\geq 0}$ be as in Lemma~\ref{Lem: formule pratique avec les ee_k contenu etc}, and
    suppose that $\ee_k = \GrO(s_k)$. Then, there exists $\lambda\in\bR$ such that
    \begin{equation}
        \label{eq: lemme croissance multi}
        r_k = \lambda p_k+\GrO(1) \qquad (k\geq 0).
    \end{equation}
\end{lemma}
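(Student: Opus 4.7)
My plan is to combine the explicit formula of Lemma~\ref{Lem: formule pratique avec les ee_k contenu etc} with the three properties of the sequences $(q_k^{(i)})$ established in Lemma~\ref{lem:lemme sur les q_i}. More precisely, I would first set $C>0$ so that $|\ee_i| \leq C s_i$ for every $i\geq 0$, and then define
\[
    \lambda := \sum_{i\geq 0} \ee_i a_i,
\]
whose absolute convergence is guaranteed by $|\ee_i a_i| \leq C s_i a_i$ together with $\sum_{i\geq 0} s_i a_i < \infty$ from Lemma~\ref{lem:lemme sur les q_i}. This candidate $\lambda$ is forced by the asymptotics $q_k^{(i)}/p_k \to a_i$.

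Next, using $r_k = \sum_{i=0}^{k}\ee_i q_k^{(i)}$ and the fact that $q_k^{(i)} = 0$ for $i > k$, I would write
\[
    r_k - \lambda p_k = \sum_{i=0}^{k}\ee_i\bigl(q_k^{(i)} - a_i p_k\bigr) - p_k\sum_{i>k}\ee_i a_i.
\]
The first sum is controlled directly by part \ref{hyp 1 sur les q_i} of Lemma~\ref{lem:lemme sur les q_i}: it is bounded in absolute value by $C\sum_{i\geq 0}s_i|q_k^{(i)}-a_i p_k| \leq CA$. For the tail sum, I would exploit the same estimate in the opposite direction: since $q_k^{(i)} = 0$ when $i>k$, part \ref{hyp 1 sur les q_i} gives $\sum_{i>k} s_i a_i p_k \leq A$, hence
\[
    \Bigl|\,p_k\sum_{i>k}\ee_i a_i\,\Bigr| \leq C\sum_{i>k} s_i a_i p_k \leq CA.
\]
Combining both bounds yields $|r_k-\lambda p_k|\leq 2CA$, which is precisely \eqref{eq: lemme croissance multi}.

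The only conceptually delicate point is the handling of the tail $\sum_{i>k}\ee_i a_i$, because naively this contributes $p_k$ times a quantity going to zero, with no a priori rate. The neat trick is that the uniform bound of part \ref{hyp 1 sur les q_i} of Lemma~\ref{lem:lemme sur les q_i}, specialized to indices $i>k$ where $q_k^{(i)}$ vanishes, automatically furnishes the estimate $p_k\sum_{i>k}s_i a_i \leq A$ needed to absorb this term; everything else is a direct manipulation of absolutely convergent sums. No further structural input is required.
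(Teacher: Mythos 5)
Your proof is correct and is essentially the paper's own argument: the paper defines the same $\lambda=\sum_{i\geq 0}\ee_i a_i$ and bounds $|r_k-\lambda p_k|=\bigl|\sum_{i\geq 0}\ee_i(q_k^{(i)}-a_i p_k)\bigr|\leq c\sum_{i\geq 0}s_i|q_k^{(i)}-a_i p_k|\leq cA$ in a single sum, which already absorbs your tail term since $q_k^{(i)}=0$ for $i>k$. Your split into $i\leq k$ and $i>k$ is only a cosmetic repackaging of the same estimate.
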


\begin{remark}
    Eq. \eqref{eq: lemme croissance multi} can be deduced from the proof of \cite[Proposition~5.5]{poels2017exponents}.
\end{remark}

\begin{proof}
    Set $\lambda := \sum_{i\geq 0} \ee_ia_i$ and let $c>0$ be such that $|\ee_i|\leq cs_i$ for each $i\geq 0$. We have $\lambda \in \bR$ by Lemma~\ref{lem:lemme sur les q_i}. Then, using \eqref{eq: key identity pour preuve rec asymp} and \ref{hyp 1 sur les q_i} of Lemma~\ref{lem:lemme sur les q_i}, we obtain for each $k\geq 0$
    \begin{align*}
        |r_k-\lambda q^{(0)}_k| = \big|\sum_{i \geq 0} \ee_i(q^{(i)}_k - a_iq^{(0)}_k)\big| \leq c\sum_{i \geq 0} s_i|q^{(i)}_k - a_iq^{(0)}_k| \leq cA.
    \end{align*}
\end{proof}

Surprisingly, under the weaker assumption $\ee_k\geq 0$, the quotient $r_k/p_k$ still converges, as soon as it is bounded (see below).

\begin{lemma}
    \label{Lem: croissance des contenus en cst*q_0}
    Let $(r_k)_{k\geq 0}$ and $(\ee_k)_{k\geq 0}$ be as in Lemma~\ref{Lem: formule pratique avec les ee_k contenu etc}, and
    suppose that for any sufficiently large $k$, we have $\ee_k\geq 0$ and $r_k\leq M p_k$ for a constant $M > 0$ independent of $k$.
    Then $r_k/p_k$ has a limit as $k$ tends to infinity.
\end{lemma}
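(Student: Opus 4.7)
The plan is to use the key identity $r_k = \sum_{i=0}^k \ee_i q_k^{(i)}$ from Lemma~\ref{Lem: formule pratique avec les ee_k contenu etc}, together with the asymptotic information $q_k^{(i)}/p_k \to a_i$ and the uniform bound $q_k^{(i)}/p_k \leq B a_i$ from Lemma~\ref{lem:lemme sur les q_i}, to run a dominated-convergence-style argument showing that $r_k/p_k$ converges to $\sum_{i\geq 0} \ee_i a_i$.

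First I would dispose of the finitely many indices where $\ee_i$ might be negative. Let $i_0$ be large enough that $\ee_i \geq 0$ for all $i \geq i_0$. Splitting
\[
    \frac{r_k}{p_k} = \sum_{i=0}^{i_0-1} \ee_i\,\frac{q_k^{(i)}}{p_k} + \sum_{i=i_0}^{k} \ee_i\,\frac{q_k^{(i)}}{p_k} =: T_k + S_k,
\]
the first piece $T_k$ is a fixed finite sum and converges as $k \to \infty$ to $\sum_{i=0}^{i_0-1} \ee_i a_i$. Thus the problem reduces to showing $S_k$ converges, and we have both $S_k \geq 0$ (all summands are nonnegative for $k \geq i_0$) and $S_k \leq M + |T_k| + 1$ eventually, hence $S_k$ is bounded.

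Next I would establish that $\Lambda := \sum_{i \geq i_0} \ee_i a_i$ is finite. For any fixed $N \geq i_0$ and large $k$,
\[
    \sum_{i=i_0}^{N} \ee_i\,\frac{q_k^{(i)}}{p_k} \leq S_k \leq \text{const},
\]
and letting $k \to \infty$ gives $\sum_{i=i_0}^{N} \ee_i a_i \leq \text{const}$; since the partial sums are non-decreasing in $N$, they converge to a finite $\Lambda$. Then, for a given $\ee > 0$, choose $N$ so that $\sum_{i > N} \ee_i a_i < \ee/B$. Using the uniform estimate $q_k^{(i)}/p_k \leq B a_i$, the tail $\sum_{i=N+1}^k \ee_i q_k^{(i)}/p_k$ is sandwiched between $0$ and $\ee$ for every $k$, while the head $\sum_{i=i_0}^{N} \ee_i q_k^{(i)}/p_k$ converges termwise to $\sum_{i=i_0}^N \ee_i a_i$ as $k \to \infty$. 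This forces $\limsup S_k - \liminf S_k \leq 2\ee$, and letting $\ee \to 0$ shows $S_k$ (hence $r_k/p_k$) converges.

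The only subtlety I foresee is precisely the switch of limits in the last step: one must handle the infinite tail uniformly in $k$, which is exactly what the bound $q_k^{(i)}/p_k \leq B a_i$ from Lemma~\ref{lem:lemme sur les q_i}\ref{hyp ii sur les q_i} is designed to deliver. Without that uniform majorization (a discrete analogue of dominated convergence), the pointwise limits $q_k^{(i)}/p_k \to a_i$ would not suffice, because the sum ranges over an index set growing with $k$. The hypothesis $r_k \leq M p_k$ is used only to produce the upper bound on $S_k$ that in turn guarantees summability of $\sum \ee_i a_i$.
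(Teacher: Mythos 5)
Your proposal is correct and follows essentially the same route as the paper: both use the identity $r_k=\sum_{i=0}^k\ee_i q_k^{(i)}$, deduce the summability of $\sum_i\ee_i a_i$ from the bound $r_k\leq Mp_k$, and then interchange limit and sum via the uniform majorization $q_k^{(i)}/q_k^{(0)}\leq Ba_i$ of Lemma~\ref{lem:lemme sur les q_i}\ref{hyp ii sur les q_i}. The only cosmetic difference is that you write out the dominated-convergence step as an explicit $\ee$-argument (splitting off the head and controlling the tail uniformly in $k$), whereas the paper simply invokes the dominated convergence theorem.
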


\begin{proof}
    Let $k,\ell$  be integers with $k\geq \ell\geq 0$. By \eqref{eq: key identity pour preuve rec asymp}, if $\ell$ is large enough, then we have $\ee_i \geq 0$ for each $i\geq \ell$, and
    \begin{align*}
        \sum_{i=0}^\ell \ee_i\frac{q^{(i)}_k}{q^{(0)}_k} \leq \sum_{i=0}^k \ee_i\frac{q^{(i)}_k}{q^{(0)}_k} = \frac{r_k}{q^{(0)}_k} \leq M.
    \end{align*}
    Yet, the quotient $q^{(i)}_k/q^{(0)}_k$ tends to $a_i$. By letting first $k$, then $\ell$, tend to infinity in the left-hand side, this shows that the series $\sum_{i\geq 0}\ee_ia_i$ converges absolutely. So, we can apply the dominated convergence Theorem by \ref{hyp ii sur les q_i} of Lemma~\ref{lem:lemme sur les q_i}. Defining $\delta_{i\leq k} = 1$ if $i\leq k$, and $0$ otherwise, we get
    \begin{align*}
        \lim_{k\rightarrow\infty} \frac{r_k}{q^{(0)}_k} = \lim_{k\rightarrow\infty} \sum_{i \geq 0} \delta_{i\leq k}\ee_i\frac{q^{(i)}_k}{q^{(0)}_k}
        = \sum_{i\geq 0} \lim_{k\rightarrow\infty} \delta_{i\leq k}\ee_i\frac{q^{(i)}_k}{q^{(0)}_k} = \sum_{i\geq 0} \ee_ia_i \in\bR.
    \end{align*}
\end{proof}

\begin{proof}[Proof of Proposition~\ref{Annexe Prop existence delta général}]
Let $(\bw_k)_{k\geq 0}$ be a $\psi$-Sturmian sequence as in Proposition~\ref{Annexe Prop existence delta général}. For each $k\geq 2$, we have $\bw_{k} = \bw_{k-1}^{s_{k}}\bw_{k-2}$. Consequently, the sequence $(r_k)_{k\geq 0} := (\log |\det(\bw_{k})|)_{k\geq 0}$ satisfies $r_k-s_kr_{k-1}-r_{k-2} = 0$ for each $k\geq 2$,
and we obtain the estimate for $|\det(\bw_k)|$ in \eqref{eq: det, norme = puiss q_k} by applying Lemma~\ref{Lem: croissance multi implique croissance en cst*q_0}. Similarly, the multiplicative growth gives
\[
    \log \norm{\bw_{k}}-s_{k}\log \norm{\bw_{k-1}}-\log \norm{\bw_{k-1}} = \GrO(s_{k}),
\]
and therefore, we can also apply Lemma~\ref{Lem: croissance multi implique croissance en cst*q_0} with the sequence $(r_k)_{k\geq 0} := (\log \norm{\bw_k})_{k\geq 0}$. Hence the estimate for $\norm{\bw_k}$ in \eqref{eq: det, norme = puiss q_k}. Note that since $(\norm{\bw_k})_{k\geq 0}$ is unbounded, we must have $\beta > 0$.

\medskip

Now, suppose that $(\bw_k)_{k\geq 0}$ is defined over $\bQ$ and consider $(r_k)_{k\geq 0} := (\log \cont(\bw_k))_{k\geq 0}$. Given $k\geq 2$, the identity $\bw_{k} = \bw_{k-1}^{s_{k}}\bw_{k-2}$ implies that $r_k \geq s_{k}r_{k-1}+r_{k-2}$ by definition of the content. Moreover, Eq. \eqref{eq: det, norme = puiss q_k} yields $r_k \leq \log \norm{\bw_k} \ll \beta p_k$ as $k$ tends to infinity. Lemma~\ref{Lem: croissance des contenus en cst*q_0} gives \eqref{eq: det = norm puiss rho q_k}.

\medskip

Suppose in addition that $(\bw_k)_{k\geq 0}$ is admissible, and that $\us$ is bounded or $\alpha=2\rho$. Let $i=t_k+\ell$ with $k\geq 1$ and $0\leq \ell < s_{k+1}$. The estimates $\norm{\bw_k^{\ell+1}\bw_{k-1}} = Y_i^{1+o(1)}$, $\norm{\bw_k^{\ell}\bw_{k-1}} = Z_i^{1+o(1)}$ are obtained by multiplicative growth (and since $\norm{\bw_k}\asymp W_k$ tends to infinity). Furthermore
\begin{align*}
    \cont(\UU(\bw_k))\cont(\UU(\bw_k^\ell\bw_{k-1})) \leq \cont(\UU(\bw_k)\wedge\UU(\bw_k^\ell\bw_{k-1})) \asymp \cont(\bw_k^{\ell+1}\bw_{k-1}),
\end{align*}
where the last part comes from Proposition~\ref{Prop nouvelle expression y_i et z_i}. On the other hand, $\cont(\bw_k)\leq \cont(\UU(\bw_k))$ and $\cont(\bw_k^\ell\bw_{k-1})\leq \cont(\UU(\bw_k^\ell\bw_{k-1}))$. So the left-hand side of \eqref{eq: prop estimations continues avec ell et UU} implies its right-hand side, and it just remains to prove the estimates with $Y_i$. For each $j\geq 0$, we set $c_j:=\cont(\bw_j)$. By \eqref{eq: det = norm puiss rho q_k} we have $c_k = W_k^{\rho/\beta+o(1)}$. Since $\cont(\bw\bw') \geq \cont(\bw)\cont(\bw')$ for each $\bw,\bw'\in\Mat(\bZ)\setminus\{0\}$, we find
\begin{align}
    \label{eq proof: cont(w_k^ell w_k-1) inter 0}
    \cont(\bw_k^{\ell+1}\bw_{k-1}) \geq c_k^{\ell+1}c_{k-1} = (W_k^{\ell+1}W_{k-1})^{\rho/\beta+o(1)}.
\end{align}

\noindent\textbf{First case.} Suppose that $\us$ is bounded. Then $o(s_{k+1}) = o(1)$ and $W_{k+1}^{o(1)} = W_k^{o(1)}$. We deduce easily \eqref{eq: prop estimations continues avec ell et UU} from \eqref{eq proof: cont(w_k^ell w_k-1) inter 0} and \eqref{eq: det = norm puiss rho q_k} combined with the upper bound
\begin{align*}
    c_{k+1} \geq c_k^{s_{k+1}-\ell-1} \cont(\bw_k^{\ell+1}\bw_{k-1}).
\end{align*}

\noindent\textbf{Second case.} Suppose that $\us$ is unbounded and $\alpha = 2\rho$. The matrix $\bw:=c^{-1}\bw_k^{\ell+1}\bw_{k-1}$ has integer coefficients, where $c:=\cont(\bw_k^{\ell+1}\bw_{k-1})$. Using \eqref{eq: det, norme = puiss q_k} we find
\begin{align*}
    1 \leq |\det(\bw)| \leq c^{-2}(W_k^{\ell+1}W_{k-1})^{\alpha/\beta+o(1)} =  c^{-2} (W_k^{\ell+1}W_{k-1})^{2\rho/\beta+o(1)},
\end{align*}
hence the upper bound $\cont(\bw_k^{\ell+1}\bw_{k-1})\leq Y_i^{\rho/\beta+o(1)}$. We conclude by \eqref{eq proof: cont(w_k^ell w_k-1) inter 0}.
\end{proof}

\subsection{Property of the set $\Sturm$} ~ \medskip
\label{subsection: new characterization of Sturm}

Let $\us$ be a sequence of positive integers and set $\sigma=\sigma(\us)$. It is difficult to compute the Diophantine exponents of an element of $\Sturm(\us)$ using only Definition~\ref{def: Sturm(psi)}. The main result of this section, namely Theorem~\ref{reciproque Prop construction 3-systeme : construction bwu et byu} below, will help dealing with this problem. Recall that $(p_k)_{k\geq -1}$ is defined by \eqref{eq def: suite q_k} and $\UU$ is the morphism introduced in Section~\ref{subsection: fonction U}.

\begin{theorem}
\label{reciproque Prop construction 3-systeme : construction bwu et byu}
    Let $\xi\in\Sturm(\us)$ and set $\Xi=(1,\xi,\xi^2)$. Then, there exist $\beta,\delta\in\bR$ with $\beta > 0$ and $\delta\in[0,\sigma/(1+\sigma)]$ with the following properties. We define $(W_k)_{k\geq 0}:= (e^{\beta p_k})_{k\geq 0}$ and for each $i=t_k+\ell$ with $k\geq 1$ and $0\leq \ell < s_{k+1}$, we set
    \begin{align*}
         Y_i:= W_k^{\ell+1}W_{k-1} \AND Z_i = W_k^\ell W_{k-1}.
    \end{align*}
    Then, there is an admissible $\psi$-Sturmian sequence $(\bw_k)_{k\geq 0}$ in $\GL(\bR)$, with multiplicative growth and defined over $\bQ$, such that
    \begin{enumerate}[label=\rm(\roman*)]
    \item \label{item: prop pour construction 3 system 2}  $\norm{\bw_k} \asymp W_k$ and $|\det(\bw_k)| \asymp W_k^{\delta}$;
    \smallskip
    \item \label{item: prop pour construction 3 system 3}  $\cont(\bw_k)= W_k^{o(1)}$ as $k$ tends to infinity.
    \end{enumerate}
    Moreover, denoting for each $i\geq -1$ by $\byt_i,\bzt_i\in\bZ^3$ the non-zero primitive integer points
    \begin{equation}
        \label{eq:def a_i et b_i}
        \byt_i:=\cont(\ba_i)^{-1}\ba_i\AND \bzt_i:= \cont(\bb_i)^{-1}\bb_i,
    \end{equation}
    where $(\ba_i)_{i\geq -1}$ and $(\bb_i)_{i\geq -1}$ are defined as in Definition~\ref{def: def alternative y_i et z_i}, we have the estimates
    \begin{align}
    \label{eq:thm estimations points primitiv 1}
        \norm{\byt_i} = Y_i^{1+o(1)},\quad \norm{\byt_i\wedge\Xi} = Y_i^{-1+\delta+o(1)}, \quad \norm{\bzt_i} = Z_i^{1+o(1)} \AND |\bzt_i\cdot \Xi| = Y_i^{\delta+o(1)}Y_{i+1}^{-1}
    \end{align}
    as $i$ tends to infinity. In particular, the sequence $(\byt_i)_{i\geq-2}$ converges projectively to $\Xi$.
\end{theorem}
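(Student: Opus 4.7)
The plan is to reconstruct a $\psi$-Sturmian sequence from the data of Definition~\ref{def: Sturm(psi)}, read off its asymptotics via the tools of Sections~\ref{subsection: multi growth} and~\ref{subsection: estimate contents}, and then transfer the estimates to the primitive points using Proposition~\ref{Prop nouvelle expression y_i et z_i}.

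Let $(\by_i)_{i\geq 0}$ be the primitive sequence associated with $\xi$ by Definition~\ref{def: Sturm(psi)}. The remarks after that definition ensure $\det(\by_i)\neq 0$ and $\norm{\by_i}\to\infty$ for large $i$; after discarding finitely many initial terms I may assume $\det(\by_i)\neq 0$ for all $i\geq-2$ and that~\ref{item:def Sturm(psi):item 2} holds for every $i\geq 0$. The scalar ambiguity in~\ref{item:def Sturm(psi):item 2} lets me rescale each $\by_i$ by a rational to produce a sequence $(\bv_i)$ satisfying the exact recurrence $\bv_{i+1}=\bv_i\bv_{\psi(i)}^{-1}\bv_i$. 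Proposition~\ref{prop: reconstruction suite sturm} then produces an admissible $\psi$-Sturmian sequence $(\bw_k)$ in $\GL(\bQ)$ and $N\in\GL(\bQ)$ with $\bv_i=\bw_k^{\ell+1}\bw_{k-1}N_k$, admissibility following from the fact that $\xi$ non-quadratic forces the $\bv_i$ to span $\bR^3$ (Lemma~\ref{lem: degenerate N symmetric}). Multiplicative growth is provided by Proposition~\ref{reciproque Prop w_k croissance multi}.

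Proposition~\ref{Annexe Prop existence delta général} next yields $\alpha,\beta,\rho\in\bR$ with $\beta>0$ governing the growth of $\norm{\bw_k}$, $|\det(\bw_k)|$, and $\cont(\bw_k)$. The key trick is the substitution $\bw_k\mapsto e^{-\rho p_k}\bw_k$: since $p_{k+1}=s_{k+1}p_k+p_{k-1}$, this preserves the recurrence, admissibility, multiplicative growth, and rationality, while shifting $(\alpha,\beta,\rho)\mapsto(\alpha-2\rho,\beta-\rho,0)$. Setting $W_k=e^{(\beta-\rho)p_k}$ and $\delta:=(\alpha-2\rho)/(\beta-\rho)$ then gives properties~\ref{item: prop pour construction 3 system 2} and~\ref{item: prop pour construction 3 system 3}; strict positivity $\beta-\rho>0$ holds because otherwise the primitive integer versions $\bw_k/\cont(\bw_k)$ would form a finite family, contradicting projective convergence of $(\by_i)$ to $\Xi$ with $\xi$ non-quadratic. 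To verify $\delta\in[0,\sigma/(1+\sigma)]$, I apply~\ref{item:def Sturm(psi):item 3} at the indices $i=t_{k+1}-1$ (for which $\psi(i+1)<i$): since $\bv_i=\bw_{k+1}N_k$ and $\cont(\bw_{k+1})=W_{k+1}^{o(1)}$ after rescaling, the primitive $\byt_i$ satisfies $\norm{\byt_i}\asymp W_{k+1}^{1+o(1)}$ and $|\det(\byt_i)|\asymp W_{k+1}^{\delta+o(1)}$, forcing $\delta\leq\sigma/(1+\sigma)$; and $\delta\geq 0$ because $|\det(\byt_i)|\geq 1$.

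Finally, the primitive-point estimates \eqref{eq:thm estimations points primitiv 1} follow from Proposition~\ref{Prop nouvelle expression y_i et z_i} (identifying $\ba_i=\bv_i$ and $\bb_i\propto\bz_i$), the content estimates \eqref{eq: prop estimations continues avec ell et UU}, and Proposition~\ref{prop: estimations pour suites y_i et z_i}, after dividing by the appropriate contents. The main obstacle is that \eqref{eq: prop estimations continues avec ell et UU} requires the further hypothesis of Proposition~\ref{Annexe Prop existence delta général}, namely that $\us$ is bounded or $\alpha=2\rho$. However, in the unbounded case, $s_{k+1}\to\infty$ along a subsequence forces $\sigma(\us)=0$, hence $\delta=0$ from the previous paragraph, hence $\alpha=2\rho$; so the further hypothesis always applies. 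With $\rho'=0$ after rescaling, \eqref{eq: prop estimations continues avec ell et UU} delivers $\cont(\bw_k^{\ell+1}\bw_{k-1})=Y_i^{o(1)}$ and $\cont(\UU(\bw_k^\ell\bw_{k-1}))=Z_i^{o(1)}$, which combined with the above yields all the required $Y_i^{\dots+o(1)}$ and $Z_i^{\dots+o(1)}$ estimates.
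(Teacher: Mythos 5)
Your overall route is the same as the paper's: reconstruct an admissible $\psi$-Sturmian sequence from the primitive points of Definition~\ref{def: Sturm(psi)} via Proposition~\ref{prop: reconstruction suite sturm}, get multiplicative growth from Proposition~\ref{reciproque Prop w_k croissance multi}, extract $\alpha,\beta,\rho$ from Proposition~\ref{Annexe Prop existence delta général}, renormalize by $e^{-\rho p_k}$, and transfer the estimates of Proposition~\ref{prop: estimations pour suites y_i et z_i} to the primitive points through the content bounds \eqref{eq: prop estimations continues avec ell et UU}. Your explicit verification that the hypothesis ``$\us$ bounded or $\alpha=2\rho$'' of Proposition~\ref{Annexe Prop existence delta général} is always met (by first getting $\delta\leq\sigma/(1+\sigma)$ from the special indices $i$ with $\psi(i+1)<i$, where only $\cont(\bw_{k+1})$ and not the content of a general product is needed) is a point the paper glosses over, and your ordering avoids any circularity there.

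There is, however, one step that does not hold as written: your justification of $\beta-\rho>0$. If $\beta'=\rho$ (in the pre-rescaling notation), the primitive integer matrices $\cont(\bw_k')^{-1}\bw_k'$ have norm $e^{(\beta'-\rho)p_k+o(p_k)}=e^{o(p_k)}$, which is subexponential but by no means bounded, so they need not ``form a finite family'' and no contradiction with projective convergence follows. A second, related omission: Proposition~\ref{Annexe Prop existence delta général} requires $(\norm{\bw_k})_{k\geq 0}$ to be unbounded in order to conclude $\beta'>0$ in the first place, and you never verify this (a priori the reconstructed sequence could be bounded, since the $\by_i$ are long products of the $\bw_k$). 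The paper closes both holes at once: it first replaces $\bw_0,\bw_1$ by scalar multiples with $|\det(\bw_0)|,|\det(\bw_1)|>1$, which forces $|\det(\bw_k')|\to\infty$ and hence unboundedness; it then observes that $|\det(\by_i')|\ll\norm{\by_i'}\,\norm{\by_i'\wedge\Xi}=o(\norm{\by_i'}^2)$ by projective convergence, giving $\alpha<2\beta'$, while primitivity of the integer models gives $2\rho\leq\alpha$; together these yield $\rho<\beta'$, i.e.\ $\beta=\beta'-\rho>0$. You should substitute this chain $2\rho\leq\alpha<2\beta'$ for your finiteness argument; the rest of your proof then goes through.
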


\begin{remark}
    As we will see, the parameter $\delta$ in our theorem depends only on $\xi$ (see the remark after Theorem~\ref{reciproque Thm exposants nb quasi sturmien}).
\end{remark}

\begin{proof}[Proof of Theorem~\ref{reciproque Prop construction 3-systeme : construction bwu et byu}]
By Definition~\ref{def: Sturm(psi)} (and the remarks below Definition~\ref{def: Sturm(psi)}), there is a sequence $(\bv_i)_{i\geq i_0}$ (with $i_0\in\bN)$ of non-zero primitive points in $\bZ^3$ such that $\det(\bv_i)\neq 0$ for each $i\geq i_0$, the point $\bv_{i+1}$ is proportional to $\bv_i\bv_{\psi(i)}^{-1}\bv_i$ for each $i$ with $\psi(i)\geq i_0$, and
\begin{equation}
\label{eq proof: prop new characterization of Sturm: eq0}
    |\det(\bv_{i})| = \norm{\bv_{i}}^{\sigma/(1+\sigma)+o(1)}
\end{equation}
as $i$ tends to infinity with $\psi(i+1) < i$. The space generated by $(\bv_i)_{i\geq i_0}$ has dimension $3$. Indeed, if $(\bv_i)_{i\geq i_0}$ was included in a subspace $V$ of dimension $2$ and defined over $\bQ$, it would imply that $\Xi\in V$, which is impossible since the coordinates of $\Xi$ are linearly independent over $\bQ$. Upon defining $\bv_{i_0-1},\dots,\bv_{-2}$ by using the induction formula $\bv_{\psi(i)} = \bv_i\bv_{i+1}^{-1}\bv_i$, we may assume without loss of generality that $i_0=-2$. Then, by Proposition~\ref{prop: reconstruction suite sturm}, there is an admissible $\psi$-Sturmian sequence $(\bw_k')_{k\geq 0}$ defined over $\bQ$ and such that $\by_i'$ is proportional to $\bv_i$ for each $i\geq -2$, where $(\by_i')_{i\geq -2}$ denotes the sequence associated to $(\bw_k')_{k\geq 0}$ by Definition~\ref{Def (y_i)}.
According to Proposition~\ref{reciproque Prop w_k croissance multi} the sequence $(\bw_k')_{k\geq 0}$ has multiplicative growth. Any $\psi$-Sturmian sequence of first terms $\lambda\bw_0'$ and $\mu\bw_1'$ (with $\lambda,\mu\neq 0$) has the above properties. So, upon replacing $\bw_0$, $\bw_1$ by larger multiples, we may assume that $|\det(\bw_0)|,|\det(\bw_1)| > 1$, which implies that $(|\det(\bw_k')|)_{k\geq 0}$ tends to infinity, and therefore $(\bw_k')_{k\geq 0}$ is unbounded. Then, by Proposition~\ref{Annexe Prop existence delta général}, there are $\alpha,\beta',\rho\in\bR$ with $\beta'>0$, such that
\[
    |\det(\bw_k')| \asymp e^{\alpha p_k},\quad \norm{\bw_k'} \asymp e^{\beta' p_k},\quad \cont(\bw_k') = e^{p_k(\rho + o(1))}
\]
as $k$ tends to infinity. since $(\by_i')_{i\geq -2}$ converges projectively to $\Xi$, if follows from the classical estimates of the determinant (see Section~3 of \cite{davenport1969approximation}) that
\[
    |\det(\by_i')| \ll \norm{\by_i'}\norm{\by_i'\wedge\Xi} = o(\norm{\by_i'}^2).
\]
By taking $i=\psi(t_{k+1})$ and by using \eqref{Eq y_psi(k) = w_k-1}, we obtain $\det(\bw_k')= o(\norm{\bw_k'}^2)$, and thus $\alpha < 2\beta'$. Moreover, by definition of the content, the matrix $\cont(\bw_k')^{-1}\bw_k'$ has integer coefficients, so its determinant is a non-zero integer. Consequently, we have $2\rho \leq \alpha$. This leads us to $\beta:=\beta'-\rho > 0$. For each $k\geq 0$, we set
\begin{equation*}
    \bw_k := e^{-\rho p_k}\bw_k'.
\end{equation*}
Then  $(\bw_k)_{k\geq 0}$ is a $\psi$-Sturmian sequence in $\GL(\bR)$ defined over $\bQ$, admissible, with multiplicative growth, and we have
\begin{equation*}
     |\det(\bw_k)| \asymp e^{(\alpha-2\rho) p_k},\quad \norm{\bw_k} \asymp e^{\beta p_k},\quad \cont(\bw_k) = e^{o(p_k)} = \norm{\bw_k}^{o(1)},
\end{equation*}
in particular $(\norm{\bw_k})_{k\geq 0}$ tends to infinity since $\beta > 0$, and $|\det(\bw_k)| \asymp \norm{\bw_k}^\delta$ with $\delta:= (\alpha-2\rho)/\beta \in [0,2)$. It proves \ref{item: prop pour construction 3 system 2} and \ref{item: prop pour construction 3 system 3}. Note that by proposition~\ref{Prop nouvelle expression y_i et z_i}, up to multiplication by a constant, the sequences $(\ba_i)_{i\geq -1}$ and $(\bb_i)_{i\geq -1}$ are the sequences of Definition~\ref{Def (y_i)}. Consequently
\begin{equation}
\label{eq proof: prop new characterization of Sturm: eq3}
    |\det(\ba_i)| \asymp |\det(\bw_k)|^{\ell+1}|\det(\bw_{k-1})|
\end{equation}
for $i=t_k+\ell$ with $k\geq 1$ and $0\leq \ell < s_{k+1}$, and the point $\ba_i$ is proportional to $\by_i'$ (and thus to $\bv_i$). We deduce that $(\ba_i)_{i\geq 0}$ converges projectively to $\Xi$, and Proposition~\ref{prop: estimations pour suites y_i et z_i} yields
\begin{align*}
    \norm{\ba_i\wedge\Xi} \asymp \frac{|\det(\ba_i)|}{\norm{\ba_i}} \AND |\bb_i\cdot \Xi| \asymp \frac{|\det(\ba_i)|}{\norm{\ba_{i+1}}}.
\end{align*}
By \ref{item: prop pour construction 3 system 2} and \eqref{eq proof: prop new characterization of Sturm: eq3}, we have $|\det(\ba_i)| = Y_i^{\delta+o(1)}$. Also note that $Y_{i+1}^{o(1)} = Y_{i}^{o(1)}$ since $Y_{i+1}=W_kY_i\leq Y_i^2$. Putting the above estimates together with
\begin{align*}
    \norm{\ba_i} = Y_i^{1+o(1)}, \quad \norm{\bb_i} = Z_i^{1+o(1)}, \quad \cont(\ba_i)= Y_i^{o(1)} \AND \cont(\bb_i) = Z_i^{o(1)}
\end{align*}
coming from Proposition~\ref{Annexe Prop existence delta général}, we get \eqref{eq:thm estimations points primitiv 1}. Note that $\by_i=\pm\bv_i$ since they are linearly dependent primitive integer points. We obtain $\delta \leq \sigma/(1+\sigma)$ by combining \eqref{eq proof: prop new characterization of Sturm: eq0} with
\begin{align*}
    |\det(\bv_i)| = |\det(\by_i)| \ll \norm{\by_i\wedge\Xi}\norm{\by_i} = Y_i^{\delta+o(1)} = \norm{\bv_i}^{\delta+o(1)}.
\end{align*}
\end{proof}

Now, let us briefly recall the definition of Sturmian type numbers constructed in \cite{poels2017exponents}.

\begin{definition}
\label{def:sturmian type numbers}
Suppose $\us$ bounded. A \textsl{proper $\psi$-Sturmian number} is a real number $\xi$ such that there are a real number $\delta\in[0,\sigma/(1+\sigma))$ and an admissible $\psi$-Sturmian sequence $(\bw_k)_{k\geq 0}$ of matrices in $\GL(\bQ)\cap\Mat(\bZ)$ with the following properties. The sequence of symmetric matrices $(\by_i)_{i\geq -2}$ associated to $(\bw_k)_{k\geq 0}$ by Definition~\ref{Def (y_i)} converges projectively to $(1,\xi,\xi^2)$ and $(\cont(\by_i))_{i\geq -2}$ is bounded. Moreover $(\bw_k)_{k\geq 0}$ is unbounded, has multiplicative growth and satisfies $|\det(\bw_k)|\asymp \norm{\bw_k}^\delta$. The set of \textsl{Sturmian type numbers} is the union of the sets of proper $\psi_\us$-Sturmian numbers for bounded sequences $\us$.
\end{definition}

\begin{remark}
The elements of $\Sturm(\us)$ (when $\us$ is bounded and $\delta < \sigma/(1+\sigma)$) have a lot in common with proper $\psi$-Sturmian numbers. However, a major difference is the possible existence of non-trivial contents for the sequences involved in Theorem~\ref{reciproque Prop construction 3-systeme : construction bwu et byu}. Also note that in our theorem, $(\bw_k)_{k\geq 0}$ does not necessarily have integer coefficients.
\end{remark}

\section{Applications to Diophantine approximation}
\label{section: appli to Diophantine approx}

Our proof of Theorem~\ref{reciproque Thm 3-systeme nombre quasi sturmien intro} (see §\ref{subsection: proofs des thm}) relies on parametric geometry of numbers. We recall the elements of the theory that we need in §\ref{subsection geom param des nb}, and in §\ref{subsection: construction 3-syst} we compute the parametric versions of the exponents $\homega$, $\omega$, $\hlambda$, $\lambda,$ associated to a point $\xi\in\Sturm(\us)$ and $\lambdaL(\xi)$. The two remaining exponents $\homega_2^*$ and $\omega_2^*$ are studied separately in §\ref{subsection: exposants omega^*}.

\subsection{Parametric geometry of numbers} ~ \medskip
\label{subsection geom param des nb}

Let $\Xi=(1,\xi,\xi^2)$ where $\xi\in\bR$ is neither rational nor quadratic. In this section we quickly present Schmidt and Summerer's tools of parametric geometry of numbers in dimension $3$ (see \cite{Schmidt2009} and \cite{Schmidt2013}). In the following, the letter $q$ always denotes a positive real number. Our setting is the same as that of \cite{poels2017exponents}, \ie we consider the two following families of symmetric convex bodies:
\begin{equation*}
    \CCC_{\xi}(e^q) := \{\bx\in\bR^3\;;\; \norm{\bx} \leq 1 \textrm{ and } |\bx\cdot\Xi|\leq e^{-q} \}
\end{equation*}
and
\begin{equation*}
    \CCC_{\xi}^*(e^q) := \{\bx\in\bR^3\;;\; \norm{\bx} \leq e^{q} \textrm{ and } \norm{\bx\wedge\Xi}\leq 1 \}.
\end{equation*}
For $j=1,2,3$, the quantity $\lambda_j(q)$ (resp. $\lambda_j^*(q)$) denotes the $j$-th successive minimum of the convex body $\CCC_{\Xi}(e^q)$ (resp. $\CCC_{\Xi}^*(e^q)$ ) with respect to the lattice $\bZ^3$. We also define
\begin{align*}
    \CL_{j}(q) = \log \lambda_j(q),\quad \psi_{j}(q) = \frac{\CL_{j}(q)}{q},\quad
    \po_j = \limsup_{q\rightarrow\infty}\psi_{j}(q),\quad \pu_j = \liminf_{q\rightarrow\infty}\psi_{j}(q),
\end{align*}
as well as the analogous quantities $\CL_{j}^*(q)$, $\psi_{j}^*(q)$, $\po_j^*$, $\pu_j^*$ associated to $\lambda_j^*(q)$. We group these successive minima $\CL_j$ (resp. $\CL_j^*$) into a single map $\bL_{\xi}=(\CL_1,\CL_2,\CL_3)$ (resp. $\bL_{\xi}^*=(\CL_1^*,\CL_2^*,\CL_3^*)$). In the following proposition (cf \cite{Schmidt2009} and \cite{Roy_juin}) we give a classical relation between standard and parametric Diophantine exponents.

\begin{proposition}
    \label{Prop dico exposants}
    Let $\xi$ be a real number which is neither rational nor quadratic. Then
    \begin{align}
        \label{Eq dico exposants}
        \big(\pu_1,\po_1, \pu_3,\po_3\big) =
        \Big(\frac{1}{\omega_2(\xi)+1},\frac{1}{\homega_2(\xi)+1}, \frac{\hlambda_2(\xi)}{\hlambda_2(\xi)+1}, \frac{\lambda_2(\xi)}{\lambda_2(\xi)+1} \Big).
    \end{align}
\end{proposition}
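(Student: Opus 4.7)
The plan is to translate each classical Diophantine condition into a parametric one via an explicit change of variables, and then combine these translations with Mahler's duality for the two successive minima at the opposite ends of the spectrum.

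First, I would handle $\pu_1$ and $\po_1$ directly. By definition, $\lambda_1(q)$ is the smallest $\lambda>0$ for which there exists $\bx\in\bZ^3\setminus\{0\}$ with $\norm{\bx}\leq\lambda$ and $|\bx\cdot\Xi|\leq\lambda e^{-q}$. Given a parameter $\omega\geq 0$, the substitution $X=e^{q/(\omega+1)}$ yields $X e^{-q}=X^{-\omega}$, so the inequality $\lambda_1(q)\leq X$ is equivalent to the existence of a non-zero integer point $\bx$ satisfying $\norm{\bx}\leq X$ and $|\bx\cdot\Xi|\leq X^{-\omega}$, which is exactly the condition entering the definitions of $\omega_2(\xi)$ and $\homega_2(\xi)$. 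Reading $\lambda_1(q)\leq e^{q/(\omega+1)}$ as $\psi_1(q)\leq 1/(\omega+1)$, one sees that it holds for arbitrarily large $q$ (resp.\ for all large $q$) if and only if $\omega\leq\omega_2(\xi)$ (resp.\ $\omega\leq\homega_2(\xi)$). Optimizing over $\omega$ gives $\pu_1=1/(\omega_2(\xi)+1)$ and $\po_1=1/(\homega_2(\xi)+1)$.

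For $\pu_3$ and $\po_3$, I would first reduce to the first minimum of the dual bodies via Mahler's inequality. Since $\CCC_\xi(e^q)$ and $\CCC_\xi^*(e^q)$ are polar to each other up to a fixed multiplicative constant, one has $\lambda_j(q)\lambda_{4-j}^*(q)\asymp 1$ for $j=1,2,3$, so $\CL_3(q)+\CL_1^*(q)=O(1)$ and $\psi_3(q)=-\psi_1^*(q)+o(1)$ as $q\to\infty$, which gives $\po_3=-\pu_1^*$ and $\pu_3=-\po_1^*$. It then remains to compute $\pu_1^*$ and $\po_1^*$. By definition, $\lambda_1^*(q)\leq\mu$ iff there exists $\bx\in\bZ^3\setminus\{0\}$ with $\norm{\bx}\leq\mu e^q$ and $\norm{\bx\wedge\Xi}\leq\mu$; the substitution $X=e^{q/(\lambda+1)}$, $\mu=X^{-\lambda}$, transforms this into the classical condition $\norm{\bx}\leq X$, $\norm{\bx\wedge\Xi}\leq X^{-\lambda}$ entering the definitions of $\lambda_2(\xi)$ and $\hlambda_2(\xi)$. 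Since $t\mapsto -t/(t+1)$ is a decreasing bijection from $(-1,0]$ onto $[0,\infty)$, one obtains $\pu_1^*=-\lambda_2(\xi)/(\lambda_2(\xi)+1)$ and $\po_1^*=-\hlambda_2(\xi)/(\hlambda_2(\xi)+1)$, hence the two remaining identities.

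The main subtlety will be that the correspondence between ``arbitrarily large $q$'' and ``arbitrarily large $X$'' requires the parametrizations $X=e^{q/(\omega+1)}$ and $X=\lambda_1^*(q)e^q$ to go to infinity with $q$; this is automatic when $\omega$ is finite, and in the dual reduction it rests on the bound $\pu_1^*>-1$, which itself follows from Minkowski's second theorem applied to the body $\CCC_\xi^*(e^q)$. All of this machinery is standard from the Schmidt--Summerer theory cited in the excerpt, and the full verification amounts to running carefully through the four elementary computations above while referring to \cite{Schmidt2009} and \cite{Roy_juin} for the polarity estimates and for Minkowski's inequality in this parametric setting.
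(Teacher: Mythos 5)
Your argument is correct and is essentially the standard one: the paper gives no proof of this proposition, simply citing Schmidt--Summerer and Roy, and your change of variables $X=e^{q/(\omega+1)}$ for the first minimum combined with Mahler duality (the paper's Proposition~\ref{Prop Mahler}) and the dual substitution $X=e^{q/(\lambda+1)}$ for the third minimum is exactly the classical translation those references establish. One small caveat: the bound $\pu_1^*>-1$ that you invoke is equivalent to $\lambda_2(\xi)<\infty$ and can fail (Minkowski only yields $\pu_1^*\geq-1$); the fact that $X=\lambda_1^*(q)e^q\to\infty$ follows instead from the irrationality of $\xi$, since $\norm{\bx\wedge\Xi}\le\lambda_1^*(q)\to 0$ forces $\norm{\bx}\to\infty$ for the minimizing integer points.
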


    Note that there also exists a parametric version of $\lambdaL(\xi)$ (see \cite[Section~3.2 and Proposition~3.6]{poels2019newExpo}), but we will not need it here.

    \medskip

    We follow \cite[§3]{Schmidt2013} and we define the \emph{combined graph} of a set of real valued functions defined on an interval $I$ to be the union of their graphs in $I\times\bR$. For a map $\bP:[c,+\infty)\rightarrow \bR^3$ and an interval $I\subset [c,+\infty)$, we also defined the \emph{combined graph of $\bP$ on $I$} to be the combined graph of its components $P_1,P_2,P_3$ restricted to $I$. In order to study the combined graph of the map $\bL_{\xi}$, it is useful to define the following functions.

\begin{definition}
    \label{Def L_y et L_z}
    For each point $\bx\in\bR^n\setminus\{0\}$ we denote by $\lambda_{\bx}(q)$ (resp. $\lambda_{\bx}^*(q)$) the smallest real number $\lambda>0$ such that $\bx\in\lambda\mathcal{C}_{\Xi}(e^q)$ (resp. $\bx\in\lambda\mathcal{C}_{\Xi}^*(e^q)$). Then, we set
    \[
        \CL_{\bx}(q) = \log(\lambda_{\bx}(q))\AND \CL_{\bx}^*(q) = \log(\lambda_{\bx}^*(q)).
    \]
    Roy calls the graph of $\CL_{\bx}$ (or of $\CL_{\bx}^*$) the \textsl{trajectory} of $\bx$.
\end{definition}

Locally, the combined graph of $\bL_{\xi}$ is included in the combined graph of a finite set $\mathrm{L}_{\bx}$, and for each $\bx\neq 0$ we have
\begin{align*}
    \CL_{\bx}(q) = \max\big(\log(\norm{\bx}), \log(|\bx\cdot\Xi|)+q\big) \AND \CL_{\bx}^*(q)
    = \max\big(\log(\norm{\bx\wedge\Xi}), \log(\norm{\bx})-q\big).
\end{align*}
Note that
\[
    \CL_1(q) = \min_{\bx\neq 0}\CL_{\bx}(q)\AND \CL_1^*(q) = \min_{\bx\neq 0}\CL_{\bx}^*(q).
\]

\begin{proposition}[Mahler]
    \label{Prop Mahler}
    For each $j=1,2,3$, we have $\pu_j = -\po_{4-j}$ and $\po_j=-\pu_{4-j}$. More precisely $\CL_{j}(q) = -\CL_{4-j}^*(q)+\GrO(1)$ for each $q>0$.
\end{proposition}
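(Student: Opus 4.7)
My plan is to derive both assertions from a single quantitative statement: that the two families $(\CCC_\xi(e^q))_{q>0}$ and $(\CCC_\xi^*(e^q))_{q>0}$ are \emph{polar dual up to uniformly bounded multiplicative constants}, then invoke the Mahler--Minkowski inequality for successive minima of a symmetric convex body and its polar. This reduces the proposition to a bounded-constant estimate, from which both the liminf/limsup identities follow by dividing by $q$ and letting $q\to\infty$.

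First, I would verify the polar-duality comparison. The body $\CCC_\xi(e^q)$ is the intersection of the unit cube $\{\|\bx\|\leq 1\}$ with the slab $\{|\bx\cdot\Xi|\leq e^{-q}\}$. Its polar is therefore (up to bounded constants depending only on $\|\Xi\|$, which is bounded since $\xi$ is fixed) the Minkowski sum of the cross-polytope $\{\|\by\|_1\leq 1\}$ with the segment $[-e^q\Xi/\|\Xi\|^2,\,e^q\Xi/\|\Xi\|^2]$. A direct check shows that this Minkowski sum is sandwiched between constant multiples of the body
\[
    \big\{\by\in\bR^3 \;:\; \|\by\|\leq e^q,\ \|\by\wedge\Xi\|\leq 1\big\}=\CCC_\xi^*(e^q);
\]
indeed, membership in $\CCC_\xi^*(e^q)$ says exactly that $\by$ is close to the line $\bR\Xi$ (at distance $\lesssim 1$) and has norm $\lesssim e^q$, which is precisely the effect of summing a bounded cross-polytope with the long segment along $\Xi$. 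The constants here depend only on $\Xi$, not on $q$.

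Next, I apply Mahler's theorem: for any symmetric convex body $K\subset\bR^3$ with $0$ in its interior and its polar $K^\circ$, the successive minima with respect to $\bZ^3$ satisfy
\[
    \tfrac{1}{3!}\leq \lambda_j(K)\lambda_{4-j}(K^\circ)\leq 3!\qquad (j=1,2,3).
\]
Combined with the comparability from the previous step (which only perturbs the minima by a bounded multiplicative factor), this yields
\[
    \lambda_j(q)\,\lambda_{4-j}^*(q)\asymp 1\qquad\text{uniformly for }q>0.
\]
Taking logarithms gives $\CL_j(q)+\CL_{4-j}^*(q)=\GrO(1)$, which is the precise statement of the proposition. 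Finally, dividing by $q$ gives $\psi_j(q)+\psi_{4-j}^*(q)=o(1)$ as $q\to\infty$, and taking $\limsup$ and $\liminf$ yields the equalities $\po_j=-\pu_{4-j}$ and $\pu_j=-\po_{4-j}$.

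The only real work is Step~1, the polar-duality comparison, since the Mahler--Minkowski inequality is off-the-shelf. That step is essentially a routine computation of the polar of an intersection (whose polar is the convex hull, or Minkowski sum up to constants, of the two polars), but some care is needed because the slab is unbounded and its polar degenerates to a segment; one must verify that adding this segment to the cross-polytope gives a body geometrically equivalent (uniformly in $q$) to the one defined by $\|\by\wedge\Xi\|\leq 1$ and $\|\by\|\leq e^q$. Once this geometric identification is in hand, the rest of the proof is formal.
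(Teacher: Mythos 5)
Your proof is correct and is the standard argument: the paper states this proposition as a classical fact (attributed to Mahler, in the framework of \cite{Schmidt2009} and \cite{Roy_juin}) without supplying a proof, and your derivation — identifying $\CCC_\xi^*(e^q)$ with the polar of $\CCC_\xi(e^q)$ up to bounded constants and then applying Mahler's inequality $1/n!\leq\lambda_j(K)\lambda_{n+1-j}(K^\circ)\leq n!$ for the self-dual lattice $\bZ^3$ — is exactly the intended route. The only cosmetic point is that the displayed exponent identities should carry the star on the right-hand side ($\pu_j=-\po_{4-j}^*$, $\po_j=-\pu_{4-j}^*$), as your own ``more precisely'' estimate makes clear.
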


The functions $\CL_j$ have many rigid properties. For example they are continuous, piecewise linear with slopes $0$ and $1$, and by Minkowski's second Theorem, for any $q\geq 0$ we have
\begin{equation}
    \label{eq: Minkowski second thm}
    \CL_{1}(q)+\CL_2(q)+\CL_3(q) = q + \GrO(1).
\end{equation}
To describe precisely their behavior, several class of functions have been introduced, starting with the model of $(n,\gamma)$-systems of Schmidt and Summerer in \cite{Schmidt2013}. In \cite{poels2017exponents}, we use the simpler notion of $n$-system given by Roy in \cite{Roy_octobre}. The main result of \cite{Roy_juin} implies that $\bL_\xi$ can be approximated, up to an additive constant, by a $3$-system, and vice versa.

\subsection{Map of the successive minima} ~ \medskip
\label{subsection: construction 3-syst}

Let $\xi\in\Sturm(\us)$, where $\us$ is a sequence of positive integers. The goal of this section is to describe the map of successive minima $\bL_\xi$ and to determine its parametric exponents, see Proposition~\ref{Prop 3-system a o(q)} and Theorem~\ref{reciproque Thm exposants nb quasi sturmien} respectively. Our strategy is to construct a simpler and explicit function $\bP$ (very similar to that in \cite[Section~7.2]{poels2017exponents}) and show that $\bP(q)=\bL_\xi(q)+o(q)$, except in some controlled intervals which may be ignored for the computation of the parametric exponents, as was already the case in \cite{poels2017exponents}. Note that in \cite{poels2017exponents}, the sequence $\us$ is bounded and the parameter $\delta$ is $< \sigma/(1+\sigma)$. Here, $\us$ might be unbounded and we allow the case $\delta = \sigma/(1+\sigma)$, which brings some technical difficulties.

\medskip

Set $\psi=\psi_\us$ and $\sigma=\sigma(\us)$ and let $\delta\in[0,\sigma/(1+\sigma)]$, $(W_k)_{k\geq 0}$, $(Y_i)_{i\geq 0}$, $(Z_i)_{i\geq 0}$, and the sequences of primitive integer points $(\by_i)_{i\geq -1}$ and $(\bz_i)_{i\geq 0}$ in $\bZ^3$ be as in Theorem~\ref{reciproque Prop construction 3-systeme : construction bwu et byu}. Note that $W_k > 1$ for each $k\geq 0$ and that $(Y_i)_{i\geq 0}$ is increasing. The theory of continued fractions provides the useful formula (see \cite[Eq. (7.2)]{poels2017exponents})
\begin{equation}
    \label{eq: sigma via W_k}
    \sigma = \liminf_{k\rightarrow\infty} \frac{p_k}{p_{k+1}} = \liminf_{k\rightarrow\infty} \frac{\log W_k}{\log W_{k+1}}.
\end{equation}

\begin{definition}
Given $i=t_k+\ell$ with $k\geq 1$ and $0\leq \ell < s_{k+1}$, we denote by $\delta_i$ the maximum of the real numbers $\eta\leq \delta$ such that $Y_i^{1-\eta} \geq \max(Z_{t_{k+1}},Z_i)$. We set $q_{i} = (2-\delta_i)\log(Y_{i})$ and $c_i  = q_i + \log(W_k)$, as well as
\begin{equation*}
         \DD_{i}^* := \frac{Y_i^{\delta_i}}{Y_i} = \big(W_k^{\ell+1}W_{k-1}\big)^{-(1-\delta_i)} \AND
         \DD_{i} := \frac{Y_i^{\delta_i}}{Y_{i+1}} = \frac{\DD_{i}^*}{W_k}. 
\end{equation*}
The functions $ \hCL_i$ and $\hCL_i^*$ are defined for each $q \geq 0$ by
\begin{align*}
    \hCL_i(q) = \max\big(\log(Z_{i}),\log(\DD_{i})+q\big) \AND  \hCL_i^*(q)
    = \max\big(\log(\DD_{i}^*),\log(Y_{i})-q\big).
\end{align*}
\end{definition}

Since  $Y_i > \max(Z_{t_{k+1}},Z_i)$, we have $\delta_i\geq 0$. Note that $q_i=\log Y_{i}-\log\DD_{i}^* = \log Z_i-\log\DD_i$ is the point at which $\hCL_i$ and $\hCL_i^*$ change slope.

\begin{lemma}
    \label{lem: lem delta_i = delta+o(1)}
    The sequence $(\delta_i)_{i\geq 0}$ converges to $\delta$ and $q_i < c_i< q_{i+1}$ for each large enough $i$.
\end{lemma}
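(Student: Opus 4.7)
The plan is to first rewrite $\delta_i$ in a usable form and then verify both claims by a direct block-by-block computation.

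For the convergence $\delta_i\to\delta$, I would introduce the unconstrained quantity $\eta_i$ defined as the largest real $\eta$ such that $Y_i^{1-\eta}\geq\max(Z_{t_{k+1}},Z_i)$, so that $\delta_i=\min(\eta_i,\delta)$. The task then reduces to $\liminf_{i\to\infty}\eta_i\geq\delta$. Writing $i=t_k+\ell$ with $0\leq\ell<s_{k+1}$, I would compute $\eta_i$ explicitly from $Y_i=W_k^{\ell+1}W_{k-1}$, $Z_i=W_k^{\ell}W_{k-1}$ and $Z_{t_{k+1}}=W_k$ (using that $W_k>W_{k-1}$ for $k$ large). When $\ell\geq 1$ one obtains $\eta_i=\log W_k/((\ell+1)\log W_k+\log W_{k-1})\geq \log W_k/\log W_{k+1}$, so by \eqref{eq: sigma via W_k} we get $\liminf\eta_i\geq\sigma$; and since $\sigma\geq\sigma/(1+\sigma)\geq\delta$, this is fine. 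When $\ell=0$, one finds $\eta_i=r_k/(1+r_k)$ with $r_k=\log W_{k-1}/\log W_k$, and \eqref{eq: sigma via W_k} gives $\liminf r_k=\sigma$, hence $\liminf\eta_i\geq\sigma/(1+\sigma)\geq\delta$. The hypothesis $\delta\leq\sigma/(1+\sigma)$ is used exactly in this $\ell=0$ case, which is the only delicate one.

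For the inequalities $q_i<c_i<q_{i+1}$, the first is immediate since $c_i-q_i=\log W_k>0$. For the second, I would first record the key identity $\log Y_{i+1}-\log Y_i=\log W_k$, valid both within a block (trivially) and at the transition $i+1=t_{k+1}$, where $Y_{t_{k+1}}=W_{k+1}W_k$ and $Y_{t_{k+1}-1}=W_k^{s_{k+1}}W_{k-1}=W_{k+1}$. Substituting into the definitions of $q_{i+1}$ and $c_i$ gives
\[
    q_{i+1}-c_i = (\delta_i-\delta_{i+1})\log Y_i + (1-\delta_{i+1})\log W_k.
\]
If $\delta_{i+1}\leq\delta_i$ (which covers the generic case $\delta_i=\delta_{i+1}=\delta$), both summands are non-negative and the second is strictly positive since $\delta_{i+1}<1$. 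The only remaining configuration forces $\ell=0$ and $\delta_i=\eta_i<\delta=\delta_{i+1}$; plugging in the explicit value $\eta_i=\log W_{k-1}/\log Y_i$ collapses the expression to
\[
    q_{i+1}-c_i = (1-\delta)\log W_{k-1} + (1-2\delta)\log W_k,
\]
and I would conclude by noting that $\delta\leq\sigma/(1+\sigma)\leq 1/\gamma^{2}<1/2$ (since $\sigma\leq 1/\gamma$), so both coefficients are positive.

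The main obstacle, conceptually, is the boundary case $\delta=\sigma/(1+\sigma)$ combined with $\ell=0$: there $\eta_i$ is genuinely asymptotic to the threshold $\delta$, one cannot hope for $\delta_i=\delta$ to hold for all large $i$, and the argument has to pass through the explicit formula for $\eta_i$ and the sharper numerical estimate $1-2\delta>0$.
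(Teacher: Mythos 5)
Your proof is correct, and for the convergence $\delta_i\to\delta$ it follows essentially the same route as the paper: both arguments come down to the two inequalities $\log W_{k-1}\geq(\sigma-o(1))\log W_k$ from \eqref{eq: sigma via W_k} and $1-\delta\geq 1/(1+\sigma)$, the only difference being that you compute the unconstrained threshold $\eta_i$ exactly (and observe $\delta_i=\min(\eta_i,\delta)$), while the paper establishes $Y_i^{1-\delta+o(1)}\geq\max(Z_{t_{k+1}},Z_i)$ directly. For the inequality $c_i<q_{i+1}$ your treatment genuinely differs: the paper deduces it from the asymptotics, writing $q_{i+1}=q_i+(2-\delta+o(1))\log W_k>c_i$ and using the boundedness of $\us$ (when $\delta>0$) to absorb the term $(\delta_i-\delta_{i+1})\log Y_i$ into $o(\log W_k)$, whereas you keep the exact identity and close the delicate case with the numerical bound $\delta\leq\sigma/(1+\sigma)\leq 1/\gamma^2<1/2$. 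Both work; yours has the mild advantage of not needing the $o(1)$ statement at that point. One small imprecision: in your final configuration you assert $\delta_{i+1}=\delta$, but when $s_{k+1}=1$ one can have $i=t_k$ and $i+1=t_{k+1}$ simultaneously, so a priori only $\delta_i<\delta_{i+1}\leq\delta$; since the exact expression $(1-\delta_{i+1})\log W_{k-1}+(1-2\delta_{i+1})\log W_k$ is decreasing in $\delta_{i+1}$, the bound $\delta_{i+1}\leq\delta<1/2$ still yields positivity, so nothing is lost.
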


\begin{remark}
If $\delta < \sigma/(1+\sigma)$, then we have $\delta_i = \delta$ for each $i$ large enough by \cite[Proposition 7.17]{poels2017exponents}. However, this might not be true if $\delta = \sigma/(1+\sigma) > 0$.
\end{remark}

\begin{proof}
    Let $i=t_k+\ell$ with $k\geq 1$ and $0\leq \ell < s_{k+1}$. Recall that $Y_{i+1}=W_k Y_i$. If $\delta = 0$, then $\delta_i = 0$ for each $i\geq 0$ and we obtain $q_{i+1} = q_i+ 2\log W_k > c_i > q_i$. Suppose now that $\delta > 0$, and therefore $\sigma > 0$. Using the inequality $\log W_{k-1} \geq (\sigma-o(1)) \log W_k$ coming from \eqref{eq: sigma via W_k}, we find
    \begin{align*}
        \log Y_i - (1+\sigma)\log Z_{t_{k+1}} = (\ell+1) \log W_k + \log W_{k-1}-(1+\sigma)\log W_k  & \geq o(\log W_k).
    \end{align*}
    We deduce from the above and $1-\delta\geq 1/(1+\sigma)$ that $Y_i^{1-\delta+o(1)} \geq Z_{t_{k+1}}$. Similarly,
    \begin{align*}
        Y_i - (1+\sigma)\log Z_i &= (\ell+1)\log W_k+\log W_{k-1} - (1+\sigma)(\ell\log W_k+\log W_{k-1}) \\
        & \geq (s_{k+1}+1)\log W_k+\log W_{k-1} - (1+\sigma)(s_{k+1}\log W_k+\log W_{k-1})\\
        & = \log W_k - \sigma\log W_{k+1} \geq  o(\log W_{k+1}),
    \end{align*}
    from which we get $Y_i^{1-\delta} \geq Z_{i}W_{k+1}^{o(1)}$. The sequence $\us$ is bounded since $\sigma > 0$, so that $o(s_{k+1}) = o(1)$ and $W_{k+1}^{o(1)} = Y_i^{o(1)} = W_k^{o(1)}$. We thus have $Y_i^{1-\delta+o(1)} \geq \max(Z_{t_{k+1}},Z_i)$, hence $\delta_i=\delta+o(1)$. As a consequence if $k$ is large enough, then
    $q_{i+1} = q_i+ (2-\delta+o(1))\log W_k > c_i > q_i$.
\end{proof}

\begin{lemma}
    \label{lem: combined graph partiel}
    There exists an index $i_0\geq 0$ such that for each $i=t_k+\ell \geq i_0$ with $k\geq 1$ and $0\leq \ell < s_{k+1}$, the combined graph of $\hCL_{t_{k+1}}$, $-\hCL_{i}^*$, $\hCL_{i}$ on $[c_{i-1},c_{i}]$ is as on Figure~\ref{figure 3system_partiel1.png}. Furthermore
    \begin{align}
    \label{eq:lem: pour description combined graph P}
    \hCL_i^*(c_i) = \hCL_{i+1}^*(c_i) \AND \hCL_i(c_i) =
    \left\{\begin{array}{ll}
    \hCL_{i+1}(c_i) & \textrm{if $\ell < s_{k+1}-1$} \\
    \hCL_{t_{k+2}}(c_i) & \textrm{if $\ell = s_{k+1}-1$}.
    \end{array}\right.
\end{align}
\end{lemma}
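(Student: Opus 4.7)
The proof is a direct verification from the explicit piecewise-linear formulas. First I will describe the combined graph on $[c_{i-1},c_i]$. By Lemma~\ref{lem: lem delta_i = delta+o(1)} the breakpoint $q_i$ of both $\hCL_i$ and $\hCL_i^*$ lies in the interior of this interval, so on $[c_{i-1},q_i]$ the function $\hCL_i$ equals the constant $\log Z_i$ and $-\hCL_i^*$ is the affine map $q\mapsto q-\log Y_i$ of slope $+1$, while on $[q_i,c_i]$ these two roles swap: $-\hCL_i^*$ is the constant $(1-\delta_i)\log Y_i$, and $\hCL_i$ is affine of slope $1$, reaching $\log Y_i$ at $q=c_i$. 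The third function $\hCL_{t_{k+1}}$ remains in its horizontal phase at height $\log Z_{t_{k+1}}=\log W_k$ throughout, because its breakpoint $q_{t_{k+1}}$ satisfies $c_i\leq c_{t_{k+1}-1}<q_{t_{k+1}}$ by Lemma~\ref{lem: lem delta_i = delta+o(1)} (which also forces $(c_j)$ to be increasing, since $c_j<q_{j+1}<c_{j+1}$). Comparing these three explicit affine pieces, with the vertical orderings dictated by the defining property $Y_i^{1-\delta_i}\geq\max(Z_{t_{k+1}},Z_i)$, yields Figure~\ref{figure 3system_partiel1.png}.

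For the matching identities at $c_i$, I will use repeatedly the relation $Y_{i+1}=W_kY_i$, which holds for every $i=t_k+\ell$ with $\ell<s_{k+1}$, including the boundary $\ell=s_{k+1}-1$: there $Y_{t_{k+1}}=W_{k+1}W_k=W_k\cdot W_k^{s_{k+1}}W_{k-1}$ thanks to $W_{k+1}=W_k^{s_{k+1}}W_{k-1}$. At $q=c_i>q_i$ one reads directly $\hCL_i^*(c_i)=\log\Delta_i^*=(\delta_i-1)\log Y_i$ and $\hCL_i(c_i)=\log\Delta_i+c_i=\log Y_i$ after substituting $c_i=(2-\delta_i)\log Y_i+\log W_k$ and $\Delta_i=Y_i^{\delta_i}/Y_{i+1}$. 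On the other side, since $c_i<q_{i+1}$ by Lemma~\ref{lem: lem delta_i = delta+o(1)}, the function $\hCL_{i+1}^*$ is still in its slope-$(-1)$ phase, giving $\hCL_{i+1}^*(c_i)=\log Y_{i+1}-c_i=\log W_k+\log Y_i-(2-\delta_i)\log Y_i-\log W_k=(\delta_i-1)\log Y_i$, which is the first identity.

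The second identity splits into two cases. If $\ell<s_{k+1}-1$, then $Z_{i+1}=W_k^{\ell+1}W_{k-1}=Y_i$, and since $c_i<q_{i+1}$ places us in the horizontal phase of $\hCL_{i+1}$, we get $\hCL_{i+1}(c_i)=\log Z_{i+1}=\log Y_i=\hCL_i(c_i)$. If $\ell=s_{k+1}-1$, then $Y_i=W_k^{s_{k+1}}W_{k-1}=W_{k+1}=Z_{t_{k+2}}$, and I must verify that $c_i<q_{t_{k+2}}$ in order to conclude $\hCL_{t_{k+2}}(c_i)=\log Z_{t_{k+2}}=\log W_{k+1}=\log Y_i$. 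This last inequality is the only step not directly supplied by Lemma~\ref{lem: lem delta_i = delta+o(1)} and I expect it to be the main (small) obstacle: I would establish it via $q_{t_{k+2}}\geq(2-\delta_{t_{k+2}})(\log W_{k+2}+\log W_{k+1})$ combined with $\log W_{k+2}\geq\log W_{k+1}+\log W_k$, whence $q_{t_{k+2}}-c_i\geq(2-2\delta_{t_{k+2}}+\delta_i)\log W_{k+1}+(1-\delta_{t_{k+2}})\log W_k$, strictly positive for $i$ large as $\delta_i,\delta_{t_{k+2}}\to\delta<1$.
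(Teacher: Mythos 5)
Your proposal is correct and follows essentially the same route as the paper: an explicit piecewise-linear verification using the breakpoint ordering $c_{j-1}<q_j<c_j$ from Lemma~\ref{lem: lem delta_i = delta+o(1)}, the relation $Y_{j+1}=W_kY_j$ (valid up to the boundary index), and the defining inequality of $\delta_j$ for the vertical orderings. Two small remarks: the inequality $c_i<q_{t_{k+2}}$ that you single out as the ``main obstacle'' is in fact directly supplied by Lemma~\ref{lem: lem delta_i = delta+o(1)} via the chain $c_i<q_{i+1}=q_{t_{k+1}}<c_{t_{k+1}}<\dots<q_{t_{k+2}}$, so your extra estimate is superfluous (though correct); and for the ordering of the three graphs at the \emph{left} endpoint $c_{i-1}$ (e.g.\ $-\hCL_i^*(c_{i-1})\geq\hCL_{t_{k+1}}(c_{i-1})$ in the generic case, reversed when $i=t_k$) the defining property of $\delta_i$ alone does not suffice --- you need that of $\delta_{i-1}$, which is exactly what the paper imports from the previous interval through the matching identities \eqref{eq:lem: pour description combined graph P}.
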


\begin{figure}[H] 

    \tikzstyle{Pointilles}=[color=black!50, dashed] 
    \tikzstyle{Pointilles2}=[dotted] 
    \tikzstyle{ZoneI}=[color=black!30, fill=black!15]
    \tikzstyle{Interv}=[|-|,line width=1pt]

    \begin{tikzpicture}[scale=0.8]


        \fill[ZoneI] (1,3.5) -- (2.5,3.5) -- (1,2) -- cycle;
        \fill[ZoneI] (8,5) -- (9,6) -- (9,5) -- cycle;

        \draw (1,1)-- (4,1)-- (9,6); 
        \draw [Pointilles2] (1,1)-- ++(-0.3,0);
        \draw [Pointilles2] (18-9,6)-- ++ (0.3,0.3);
        \draw (10.6-9,0.6) node {$\hCL_{t_k}$};

        \draw (10-9,2)-- (13-9,5)-- (18-9,5); 
        \draw [Pointilles2] (10-9,2)-- ++(-0.3,-0.3);
        \draw [Pointilles2] (18-9,5)-- ++(0.3,0);
        \draw (10.8-9,2.3) node {$-\hCL_{t_k}^*$};

        \draw (1,3.5)-- (9,3.5); 
        \draw [Pointilles2] (1,3.5)-- ++(-0.3,0);
        \draw [Pointilles2] (9,3.5)-- ++ (0.3,0);
        \draw (10.3-9,3.9) node {$\hCL_{t_{k+1}}$};


        \draw [Pointilles] (1,3.5)-- (1,0) node [below, color=black] {$c_{t_k-1}$};
        \draw [Pointilles] (2.5,3.5)-- (2.5,0) node [below, color=black] {$a_{t_k}$};
        \draw [Pointilles] (4,5)-- (4,0) node [below, color=black] {$q_{t_k}$};
        \draw [Pointilles] (6.5,3.5)-- (6.5,0) node [below, color=black] {$d_{k}$};
        \draw [Pointilles] (8,5)-- (8,0) node [below, color=black] {$b_{t_k}$};
        \draw [Pointilles] (9,6)-- (9,0) node [below, color=black] {$c_{t_k}$};


        \draw[Interv] (2.5,0.2)-- (8,0.2) node [midway,above] {$I_{t_k}$};

        \draw[line width=1pt] (10.5,7)-- (10.5,-0.5);


        \fill[ZoneI] (12,2) -- (13.5,3.5) -- (12,3.5) -- cycle;
        \fill[ZoneI] (16.5,5) -- (18,5) -- (18,6.5) -- cycle;

        \draw (12,1)-- (18,1); 
        \draw [Pointilles2] (12,1)-- ++(-0.3,0);
        \draw [Pointilles2] (18,1)-- ++ (0.3,0);
        \draw (17.5,1.5) node {$\hCL_{t_{k+1}}$};

        \draw (12,3.5)-- (15,3.5)-- (18,6.5); 
        \draw [Pointilles2] (12,3.5)-- ++(-0.3,0);
        \draw [Pointilles2] (18,6.5)-- ++ (0.3,0.3);
        \draw (17.5,6.3) node {$\hCL_{i}$};

        \draw (12,2)-- (15,5)-- (18,5); 
        \draw [Pointilles2] (12,2)-- ++(-0.3,-0.3);
        \draw [Pointilles2] (18,5)-- ++ (0.3,0);
        \draw (17.5,4.5) node {$-\hCL_{i}^*$};

        \draw [Pointilles] (12,3.5)-- (12,0) node [below, color=black] {$c_{i-1}$};
        \draw [Pointilles] (13.5,3.5)-- (13.5,0) node [below, color=black] {$a_{i}$};
        \draw [Pointilles] (15,5)-- (15,0) node [below, color=black] {$q_{i}$};
        \draw [Pointilles] (16.5,5)-- (16.5,0) node [below, color=black] {$b_{i}$};
        \draw [Pointilles] (18,6.5)-- (18,0) node [below, color=black] {$c_{i}$};


        \draw[Interv] (13.5,0.2)-- (16.5,0.2) node [midway,above] {$I_{i}$};

    \end{tikzpicture}

    \caption{\label{figure 3system_partiel1.png}
    Combined graph of $\hCL_{t_{k+1}}$, $-\hCL_{i}^*$, $\hCL_{i}$ on $[c_{i-1},c_{i}]$ with $t_k< i < t_{k+1}$}
\end{figure}

\begin{proof}
    By Lemma~\ref{lem: lem delta_i = delta+o(1)} we can suppose $i$ large enough so that $c_{j-1} < q_j < c_{j}$ for each $j\geq i$. Recall that $q_i$ is the point at which $\hCL_i$ and $\hCL_i^*$ change slope and that $Y_{i+1}=W_kY_i$. The intersection point abscissa of $\hCL_i^*$ and $\hCL_{i+1}^*$ is $\log Y_{i+1} - \log \DD_i^* = c_i$. If $\ell < s_{k+1}-1$ (resp. $\ell = s_{k+1}-1$) then $Z_i < Z_{i+1} = Y_i$ (resp. $Z_i < Z_{t_{k+2}} = Y_i$) and the intersection point abscissa of $\hCL_i$ and $\hCL_{i+1}$ (resp. $\hCL_i$ and $\hCL_{t_{k+2}}$) is $\log Y_i - \log \DD_i = c_i$. Hence \eqref{eq:lem: pour description combined graph P}.

    \medskip

    We now prove the first part of the lemma. It suffices to compare $\hCL_{t_{k+1}}(q)$, $-\hCL_{i}^*(q)$ and $\hCL_{i}(q)$ at $q=c_{i-1}$, $q_i$ and $c_i$. By definition of $\delta_i$, we have
    \begin{align*}
        (1-\delta_i)\log Y_i = -\hCL_i^*(q_i) \geq \max(\hCL_i(q_i),\hCL_{t_{k+1}}(q_i)) =
        \left\{
        \begin{array}{ll}
            \hCL_i(q_i) = \log Z_i & \textrm{if $i > t_k$} \\
            \hCL_{t_{k+1}}(q_i) = \log W_k & \textrm{if $i = t_k$}.
        \end{array}
        \right.
    \end{align*}
    Since  $-\hCL_i^*$, $\hCL_{t_{k+1}}$ are constant on $[q_i,c_i]$, we deduce that $-\hCL_i^*(c_i) \geq \hCL_{t_{k+1}}(c_i)$. Moreover
    \begin{align*}
        \hCL_{i}(c_i) = \hCL_{i}(q_i)+c_i-q_i = \log Y_i \geq (1-\delta_i)\log Y_i = -\hCL_i^*(c_i) \geq  \hCL_{t_{k+1}}(c_i).
    \end{align*}
    If $i > t_k$, then by the above, we have $\hCL_{i-1}(c_{i-1}) \geq -\hCL_{i-1}^*(c_{i-1}) \geq \hCL_{t_{k+1}}(c_{i-1})$. Combined with \eqref{eq:lem: pour description combined graph P}, this is equivalent to
    \begin{align*}
        \hCL_{i}(c_{i-1}) \geq -\hCL_{i}^*(c_{i-1}) \geq \hCL_{t_{k+1}}(c_{i-1}).
    \end{align*}
    Similarly, for $i=t_{k-1}+s_k-1 = t_k-1$, Eq. \eqref{eq:lem: pour description combined graph P} together with $\hCL_{i}(c_{i}) \geq -\hCL_{i}^*(c_{i}) \geq \hCL_{t_{k}}(c_{i})$ yields
    \begin{align*}
        \hCL_{t_{k+1}}(c_{t_k-1}) \geq -\hCL_{t_k}^*(c_{t_k-1}) \geq \hCL_{t_{k}}(c_{t_k-1}).
    \end{align*}

\end{proof}

\begin{definition}
\label{reciproque Def bP}
Let $i_0\geq 0$ satisfying Lemma~\ref{lem: combined graph partiel}. We define the function $\bP=(\CP_1,\CP_2,\CP_3)$ on $[c_{i_0-1},+\infty)$ as follows. For integers $i,k\geq 0$ with $i \geq i_0$ and $t_{k}\leq i < t_{k+1}$, we set for each $q\in(c_{i-1},c_i]$
\begin{equation*}
    \bP(q) := \Phi\Big(\hCL_{t_{k+1}}(q),-\hCL_{i}^*(q),\hCL_{i}(q)\Big),
\end{equation*}
where $\Phi:\bR^3\rightarrow\bR^3$ is the function which lists the coordinates of a point in monotonically increasing order. For each $i \geq i_0$ we denote by $I_{i} = [a_{i},b_{i}]\ni q_{i}$ the subinterval of $[c_{i-1},c_{i}]$ on which $\CP_3 = -\hCL_{i}^*$, and we set $I_{i}'=[b_{i},a_{i+1}]\ni c_i$, see Figure~\ref{figure 3system_partiel1.png}.
\end{definition}

If $\delta_i=0$, then $I_{i}'=\{c_i\}$. By Lemma~\ref{lem: combined graph partiel} the function $\bP$ is continuous, $\CP_1\leq \CP_2\leq\CP_3$ and
\begin{equation}
\label{eq: Minkowski pour P}
    \CP_1(q)+\CP_2(q)+\CP_3(q) = q
\end{equation}
for each $q\geq c_{i_0}$. More generally, we can show that $\bP$ is a $3$-system on $[c_{i_0},+\infty)$ (as defined in \cite[Definition 7.9]{poels2017exponents}), whose combined graph is as that in Figure~\ref{figure 3system_partiel2.png}.

\begin{figure}[H] 
  \centering
  \tikzstyle{ZoneI}=[color=black!30, fill=black!20]
  \tikzstyle{DelimitBlock}=[color=black!50, dashed] 
  \tikzstyle{Absc}=[color=black!50, dashed] 
  \tikzstyle{Interv}=[|-|,line width=1pt]
  \tikzstyle{Interv2}=[line width=0.7pt]

  \tikzstyle{AbsPos}=[below]
  \tikzstyle{SegPos}=[above]

    \begin{tikzpicture}[scale=0.32]
        \clip(0,-2)  rectangle (50,20);

        \fill[ZoneI] (6,2.25) -- (6.25,2.25) -- (6.5,2.5) -- (6.25,2.5) -- cycle;
        \fill[ZoneI] (8.7,3.6) -- (9.1,3.6) -- (9.5,4) -- (9.1,4) -- cycle;
        \fill[ZoneI] (13.9,4.95) -- (14.45,4.95) -- (15,5.5) -- (14.45,5.5) -- cycle;
        \fill[ZoneI] (21.1,8.55) -- (22.05,8.55) -- (23,9.5) -- (22.05,9.5) -- cycle;
        \fill[ZoneI] (28.3,12.15) -- (29.65,12.15) -- (31,13.5) -- (29.65,13.5) -- cycle;
        \fill[ZoneI] (45,15.75) -- (46.75,15.75) -- (48.5,17.5) -- (46.75,17.5) -- cycle;

        \draw [] (4.6,1.5)-- (10.45,1.5)-- (12.95,4)-- (33.25,4)-- (42.75,13.5)-- (58.9,13.5)-- (62.9,17.5)-- (65,17.5);
        \draw [Absc] (10.45,1.5)-- (10.45,0) node {$\shortmid$} node [AbsPos, color=black] {$q_{t_k}$};
        \draw [Absc] (33.25,4)-- (33.25,0) node {$\shortmid$} node [AbsPos, color=black] {$q_{t_{k+1}}$};
        \draw [Absc] (58.9,13.5)-- (58.9,0) node {$\shortmid$} node [AbsPos, color=black] {$q_{t_{k+2}}$};

        \draw [] (4.6,1)-- (4.75,1)-- (6,2.25)-- (6.25,2.25)-- (6.5,2.5)-- (7.6,2.5)-- (8.7,3.6)-- (9.1,3.6)-- (9.5,4)-- (12.95,4)-- (13.9,4.95)-- (14.45,4.95)-- (15,5.5)-- (18.05,5.5)-- (21.1,8.55)-- (22.05,8.55)-- (23,9.5)-- (25.65,9.5)-- (28.3,12.15)-- (29.65,12.15)-- (31,13.5)-- (42.75,13.5)-- (45,15.75)-- (46.75,15.75)-- (48.5,17.5)-- (62.9,17.5)-- (65,19.6);


        \draw [Interv] (8.7,0.1)  -- (9.5,0.1);
        \draw [Interv] (13.9,0.1) --  (15,0.1) node [midway,SegPos] {$I'_{t_k}\,\,$};
        \draw [Interv] (21.1,0.1) -- (23,0.1) node [midway,SegPos] {$I'_{t_k+1}$};
        \draw [Interv] (28.3,0.1) -- (31,0.1) node [midway,SegPos] {$I'_{t_{k}+2}$};
        \draw [Interv] (45,0.1) -- (48.5,0.1) node [midway,SegPos] {$I'_{t_{k+1}}$};


        \draw [|-,Interv2] (6.5,0.1) -- (8.7,0.1)  ;
        \draw [Interv2] (9.5,0.1) -- (13.9,0.1)  node [midway,SegPos] {$I_{t_k}$};
        \draw [Interv2] (15,0.1) -- (21.1,0.1) node [midway,SegPos] {$I_{t_k+1}$};
        \draw [Interv2] (23,0.1) -- (28.3,0.1) node [midway,SegPos] {$I_{t_{k}+2}$};
        \draw [Interv2] (31,0.1) -- (45,0.1) node [midway,SegPos] {$I_{t_{k+1}}$};

        \draw [Absc] (14.45,4.95) -- (14.45,0) node {$\shortmid$} node [AbsPos, color=black] {$c_{t_k}\,\,$};
        \draw [Absc] (18.05,5.5)-- (18.05,0)  node {$\shortmid$} node [AbsPos, color=black] {$q_{t_k+1}$};
        \draw [Absc] (22.05,8.55)-- (22.05,0) node {$\shortmid$} node [AbsPos, color=black] {$c_{t_{k}+1}$};
        \draw [Absc] (25.65,9.5)-- (25.65,0) node {$\shortmid$}  node [AbsPos, color=black] {$q_{t_k+2}$};
        \draw [Absc] (29.65,12.15)-- (29.65,0) node {$\shortmid$} node [AbsPos, color=black] {$c_{t_{k}+2}$};
        \draw [Absc] (46.75,15.75)-- (46.75,0) node {$\shortmid$} node [AbsPos, color=black] {$c_{t_{k+1}}$};

        \draw [] (4.6,2.1)-- (4.75,2.25)-- (6,2.25)-- (6.25,2.5)-- (6.5,2.5)-- (7.6,3.6)-- (8.7,3.6)-- (9.1,4)-- (9.5,4)-- (10.45,4.95)-- (13.9,4.95)-- (14.45,5.5)-- (15,5.5)-- (18.05,8.55)-- (21.1,8.55)-- (22.05,9.5)-- (23,9.5)-- (25.65,12.15)-- (28.3,12.15)-- (29.65,13.5)-- (31,13.5)-- (33.25,15.75)-- (45,15.75)-- (46.75,17.5)-- (48.5,17.5)-- (58.9,27.9)-- (65,27.9);

    \end{tikzpicture}
    \caption{\label{figure 3system_partiel2.png}
    Combined graph of $\bP$ (with $s_{k+1}=3$)}
\end{figure}

Using the estimate $\delta_i=\delta + o(1)$ together with \eqref{eq:thm estimations points primitiv 1}, we get the following estimates
\begin{equation}
    \label{prop: trajectoire y_i, z_i et composantes de P}
    \norm{\byt_i} = Y_i^{1+o(1)}, \quad  \norm{\byt_{i}\wedge\Xi} = \DD_{i}^*Y_i^{o(1)},
    \quad  \norm{\bzt_{i}} = Z_i^{1+o(1)} \AND |\bzt_{i}\cdot\Xi| = \DD_{i}Y_i^{o(1)},
\end{equation}
as $i$ tends to infinity. They play a crucial role in the proof of our next result.

\begin{proposition}
\label{Prop 3-system a o(q)}
Let $\xi\in\Sturm(\us)$ and denote by $\bP$ the function associated to $\xi$ as in Definition~\ref{reciproque Def bP}. Set $I:= \bigcup_{i \geq i_0} I_i$ and $I':= \bigcup_{i \geq i_0}I'_{i}$. Then
\begin{enumerate}[label=\rm(\roman*)]
    \item \label{reciproque enum 2 Thm 3-systeme nombre quasi sturmien}
        As $q\in I$ tends to infinity, we have $\bL_{\xi}(q) = \bP(q) + o(q)$;
    \smallskip
    \item \label{reciproque enum 3 Thm 3-systeme nombre quasi sturmien}
        As $q\in I'$ tends to infinity, we have $\CL_1(q) = \CP_1(q) + o(q)$ and
        \begin{align}
        \label{eq prop:reciproque enum 3}
        \CP_2(q) \leq \CL_2(q)+o(q) \leq \frac{\CP_2(q)+\CP_3(q)}{2} \leq \CL_3(q)+o(q) \leq \CP_3(q).
        \end{align}
\end{enumerate}
In particular, if $\delta=0$, then $\bL_{\xi}(q) = \bP(q) + o(q)$ as $q$ tends to infinity.
\end{proposition}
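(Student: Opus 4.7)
The plan is to use the primitive integer points $\bzt_{t_{k+1}}, \bzt_i, \byt_i$ from Theorem~\ref{reciproque Prop construction 3-systeme : construction bwu et byu} as witnesses whose trajectories match, up to $o(q)$, the three piecewise-linear building blocks $\hCL_{t_{k+1}}, \hCL_i, -\hCL_i^*$ of $\bP$. Combining the resulting one-sided estimates with Minkowski's second theorem and the $3$-system rigidity of $\bL_\xi$ will force $\bL_\xi = \bP + o(q)$ on $I_i$, and a parallel (but weaker) analysis will yield part~(ii) on $I_i'$.

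More concretely, from \eqref{prop: trajectoire y_i, z_i et composantes de P} together with the observation that $q \asymp \log Y_i$ uniformly on $[c_{i-1}, c_i]$ (which follows from $q_i = (2-\delta_i)\log Y_i$ and Lemma~\ref{lem: lem delta_i = delta+o(1)}), I derive the trajectory estimates $\CL_{\bzt_j}(q) = \hCL_j(q) + o(q)$ for $j \in \{i, t_{k+1}\}$ and $\CL_{\byt_i}^*(q) = \hCL_i^*(q) + o(q)$ on $[c_{i-1}, c_i]$. The linear independence of $\bzt_i$ and $\bzt_{t_{k+1}}$ follows from the admissibility condition $\det(\bw_0\bw_1-\bw_1\bw_0)\neq 0$ via Lemma~\ref{lem: degenerate N symmetric} and Proposition~\ref{Prop nouvelle expression y_i et z_i}. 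Using the standard facts $\CL_1 \leq \CL_\bx$, $\CL_1^* \leq \CL_\bx^*$ pointwise and $\CL_2 \leq \max(\CL_{\bx_1}, \CL_{\bx_2})$ for linearly independent $\bx_1, \bx_2$, together with Mahler's duality (Proposition~\ref{Prop Mahler}), I obtain on $[c_{i-1}, c_i]$:
\[
\CL_1(q) \leq \min(\hCL_{t_{k+1}}(q), \hCL_i(q)) + o(q), \quad \CL_2(q) \leq \max(\hCL_{t_{k+1}}(q), \hCL_i(q)) + o(q), \quad \CL_3(q) \geq -\hCL_i^*(q) + o(q).
\]

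On $I_i$, Lemma~\ref{lem: combined graph partiel} identifies $-\hCL_i^* = \CP_3$ as the maximum of the three building blocks and $\{\hCL_{t_{k+1}}, \hCL_i\}$ with $\{\CP_1, \CP_2\}$ in the natural ordering, so the previous estimates read $\CL_j \leq \CP_j + o(q)$ for $j = 1, 2$ and $\CL_3 \geq \CP_3 + o(q)$. Combined with Minkowski's second theorem \eqref{eq: Minkowski second thm} and identity \eqref{eq: Minkowski pour P}, which both give $\sum_j \CL_j(q) = q + O(1) = \sum_j \CP_j(q)$, these one-sided estimates close to the equality $\CL_j(q) = \CP_j(q) + o(q)$ for all $j$, provided that $\bP$ is a $3$-system in the sense of \cite[Definition~7.9]{poels2017exponents}, so that Roy's approximation $\bL_\xi = \bQ + O(1)$ by a $3$-system $\bQ$ from \cite{Roy_juin} forces $\bQ = \bP + o(q)$ by rigidity. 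Verifying that $\bP$ is a $3$-system directly from the combined-graph structure depicted in Figure~\ref{figure 3system_partiel2.png}---and especially handling the case $\delta = \sigma/(1+\sigma)$, where $\delta_i$ may fluctuate in contrast with \cite{poels2017exponents} where $\delta<\sigma/(1+\sigma)$ forces $\delta_i = \delta$ eventually---is the main technical obstacle.

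Part~(ii) is proved analogously on $I_i'$: the same upper bound on $\CL_1$ carries over, and the matching lower bound follows from the $3$-system rigidity combined with part~(i) applied at the endpoints, where $I_i'$ meets $I_i$ and $I_{i+1}$ (or $I_i$ and the analogous interval from the next $k$), yielding $\CL_1 = \CP_1 + o(q)$. The four-fold inequality for $\CL_2, \CL_3$ then follows by combining Minkowski's theorem---which gives $\CL_2 + \CL_3 = \CP_2 + \CP_3 + o(q)$---with the ordering $\CL_2 \leq \CL_3$ (yielding $2\CL_2 \leq \CL_2+\CL_3 \leq 2\CL_3$), and with the same rigidity argument to extract the extremal bounds $\CL_2 \geq \CP_2 - o(q)$ and $\CL_3 \leq \CP_3 + o(q)$. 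Finally, if $\delta = 0$ then $\delta_i = 0$ for all large $i$, so $I_i'$ collapses to the single point $c_i$, and the equality $\bL_\xi = \bP + o(q)$ extends to all of $[c_{i_0-1}, +\infty)$ by continuity of both functions.
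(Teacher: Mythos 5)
Your setup matches the paper's: the witness points $\bzt_{t_{k+1}}$, $\bzt_i$, $\byt_i$, the trajectory estimates $\CL_{\bzt_j}(q)=\hCL_j(q)+o(q)$ and $\CL_{\byt_i}^*(q)=\hCL_i^*(q)+o(q)$, the linear independence of $\bzt_i$ and $\bzt_{t_{k+1}}$ (which the paper gets directly from $\bzt_{t_{k+1}}\wedge\bzt_i=\pm\byt_i$), and the resulting one-sided bounds $\CL_1\leq\CP_1+o(q)$, $\CL_2\leq\CP_2+o(q)$, $\CL_3\geq\CP_3+o(q)$ on $I_i$. The gap is in the step that closes these to equalities. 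These three inequalities together with $\CL_1+\CL_2+\CL_3=q+\GrO(1)=\CP_1+\CP_2+\CP_3$ do \emph{not} force $\bL_\xi=\bP+o(q)$: the configuration $\CL_1=\CP_1-t$, $\CL_2=\CP_2-t$, $\CL_3=\CP_3+2t$ is compatible with all of them for every $t\geq 0$ respecting the ordering. The ``$3$-system rigidity'' you invoke to rule this out is not a theorem of \cite{Roy_juin} or of anything cited: two $3$-systems satisfying such one-sided bounds need not be $o(q)$-close, and if such a rigidity principle held it would equally prove assertion (i) on the intervals $I_i'$, where only the weaker assertion (ii) is true. So even granting that $\bP$ is a $3$-system (which you correctly flag as needing verification), your argument for (i) is incomplete.

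What is actually needed is a direct upper bound $\CL_3(q)\leq\CP_3(q)+o(q)$ on $I_i$, and this is where the paper does real work. Its argument: if $\CL_3(q)\geq\CP_3(q)+\ee_1 q$, then $\CL_1^*(q)<\CL_{\byt_i}^*(q)$ is attained by a primitive point independent of $\byt_i$, whence $\CL_2^*(q)\leq\CL_{\byt_i}^*(q)$ and, by Mahler duality, $\CL_2(q)\geq\CP_3(q)+o(q)$; combined with the upper bounds this forces $\CL_2(q)=\CP_2(q)+o(q)=\CP_3(q)+o(q)$, then $\CL_1(q)\leq\CP_1(q)-\ee_1 q+o(q)$, so $\CL_1(q)$ lies strictly below both $\CL_{\bzt_i}(q)$ and $\CL_{\bzt_{t_{k+1}}}(q)$ and hence $\CL_2(q)\leq\CP_1(q)+o(q)$ as well; all three $\CP_j(q)$ are then within $\ee_3 q$ of $q/3$, which by the shape of $\bP$ can only happen for $i=t_k$ and $q$ within $\ee_2 d_k$ of the crossing point $d_k$, and one concludes via the slope constraints by applying the first case at $q=(1-\ee_2)d_k$. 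Some argument of this kind must be supplied. For part (ii), your idea of using (i) at the endpoints of $I_i'$ is the right one, but again the extremal bounds $\CL_2\geq\CP_2+o(q)$ and $\CL_3\leq\CP_3+o(q)$ should not be attributed to rigidity: they follow from (i) at $b_i$ and $a_{i+1}$ together with the facts that $\CL_2,\CL_3$ are nondecreasing with slope at most $1$ while $\CP_2$ (resp.\ $\CP_3$) is flat then of slope $1$ (resp.\ of slope $1$ then flat) on $I_i'$.
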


Roughly speaking, the combined graph of $\CL_2$ and $\CL_3$ on $I_j'$ is included --within $o(c_{j})$ -- in the corresponding shaded area on Figure~\ref{figure 3system_partiel2.png}. Our strategy is very similar to that in \cite[proof of Proposition~7.20]{poels2017exponents}. Here, the situation is a bit more complicated because $\us$ can be unbounded and we deal with some $o(q)$ instead of $\GrO(1)$.

\begin{proof}
Since $Y_{i+1}^{o(1)} = Y_i^{o(1)}$ as $i$ tends to infinity, we have $o(q_{i+1}) = o(q_i)$ and thus $o(a_i) = o(b_i) = o(a_{i+1})$. Note that if $\delta=0$, then $I_i'=\{c_i\}$ and \ref{reciproque enum 2 Thm 3-systeme nombre quasi sturmien} implies \ref{reciproque enum 3 Thm 3-systeme nombre quasi sturmien} and the last part of the proposition.

\medskip

Suppose that \ref{reciproque enum 2 Thm 3-systeme nombre quasi sturmien} holds, and let us prove \ref{reciproque enum 3 Thm 3-systeme nombre quasi sturmien}. Recall that $\CL_1$, $\CL_2$, $\CL_3$ are (continuous) piecewise linear with slope $0$ or $1$. In particular, they  are monotonically increasing. Let $i > 0$ and $q\in I_i'=[b_i,a_{i+1}]$. We deduce from \ref{reciproque enum 2 Thm 3-systeme nombre quasi sturmien} the estimates
\begin{align*}
\CP_1(b_i)+o(b_i) = \CL_1(b_i) \leq \CL_1(q) \leq \CL_1(a_{i+1}) = \CP_1(a_{i+1}) + o(a_{i+1}).
\end{align*}
Since $\CP_1$ is constant on the interval $I_i'$ and $o(a_{i+1})=o(b_i)$, we obtain $\CL_1(q) = \CP_1(q) + o(q)$. The function $\CP_3$ is increasing with slope $1$ on $[b_i,c_i]$, and constant on $[c_i,a_{i+1}]$. Since $\CL_3$ has slope at most $1$ and satisfies $\CL_3(b_i) = \CP_3(b_i)+o(b_i)$ and $\CL_3(a_{i+1}) = \CP_3(a_{i+1})+o(a_{i+1})$ by \ref{reciproque enum 2 Thm 3-systeme nombre quasi sturmien}, we get $\CL_3(q)\leq \CP_3(q)+o(q)$. Similarly, we find $\CL_2(q)\geq \CP_2(q)+o(q)$. Finally, by \eqref{eq: Minkowski second thm}, the estimate $\CL_1(q)=\CP_1(q)+o(q)$ and \eqref{eq: Minkowski pour P}, we find
\begin{align*}
\CL_2(q)+\CL_3(q) = q-\CL_1(q)+\GrO(1) = \CP_2(q)+\CP_3(q) + o(q).
\end{align*}
Combining the above with $\CL_2(q)\leq \CL_3(q)$, we obtain the remaining inequalities of \eqref{eq prop:reciproque enum 3}.

\medskip

We now prove \ref{reciproque enum 2 Thm 3-systeme nombre quasi sturmien}, or equivalently, that for each $\ee > 0$ there exists $i_1$ such that
\begin{align}
\label{eq proof:prop pour 3-system:eq3}
    \norm{\bL_{\xi}(q) - \bP(q)}\leq  \ee q
\end{align}
for each $q\in I_i$ with $i\geq i_1$. Let $i > i_0$ and $k\ge 0$ such that $t_k\leq i < t_{k+1}$. Eq. \eqref{prop: trajectoire y_i, z_i et composantes de P} implies that
\begin{align}
    \label{eq proof:prop pour 3-system}
    \bP(q) = \Phi\big(\CL_{\bzt_{t_{k+1}}}(q),\CL_{\bzt_i}(q),-\CL_{\byt_i}^*(q)\big)+o(q) \qquad (q\in I_i),
\end{align}
where $\Phi:\bR^3\rightarrow\bR^3$ is the function which lists the coordinates of a point in monotonically increasing order. The points $\bzt_i$ and $\bzt_{t_{k+1}}$ are linearly independent, for $\bzt_{t_{k+1}}\wedge \bzt_i = \pm \byt_i$ by definition. This implies that $\CL_1 \leq \min\{\CL_{\bzt_i},\CL_{\bzt_{t_{k+1}}}\}$ and $\CL_2 \leq \max \{ \CL_{\bzt_i},\CL_{\bzt_{t_{k+1}}}\}$. Similarly, we have $\CL_1^* \leq \CL_{\byt_i}^*$. Combined with \eqref{eq proof:prop pour 3-system} and $\CL_3 = -\CL_1^* + \GrO(1)$, we obtain, as $q\in I_i$ tends to infinity,
\begin{align}
\label{eq proof:prop pour 3-system:eq2}
\CL_1(q) \leq \CP_1(q)+o(q), \quad \CL_2(q) \leq \CP_2(q)+o(q) \AND \CL_3(q) \geq \CP_3(q)+o(q).
\end{align}
Fix $\ee_1,\ee_2 > 0$ and choose $q\in I_i$.

\medskip

\noindent\textbf{First case.} Suppose that $\CL_3(q) \leq \CP_3(q)+\ee_1 q$. By \eqref{eq: Minkowski second thm} and \eqref{eq: Minkowski pour P}  we have
\begin{align*}
    \CL_1(q)+\CL_2(q) +\GrO(1) = q -\CL_3(q) \geq q - \CP_3(q) - \ee_1 q = \CP_1(q)+\CP_2(q) - \ee_1 q.
\end{align*}
Combined with \eqref{eq proof:prop pour 3-system:eq2}, it shows that \eqref{eq proof:prop pour 3-system:eq3} holds with $\ee=2\ee_1$ if $i$ is large enough.

\medskip

\noindent\textbf{Second case.} Suppose that $\CL_3(q) \geq \CP_3(q)+\ee_1 q$. We claim that if $q$ is large enough, then necessarily $i=t_k$ and $|q-d_k| < \ee_2 d_k$ (where $d_k$ is defined by $\hCL_{t_{k+1}}(d_k) = \hCL_{t_{k}}(d_k)$, see Figure~\ref{figure 3system_partiel1.png}). Yet the components of $\bL_\xi$ and $\bP$ are continuous with slope $0$ or $1$. So, by using the first case with $q=(1-\ee_2)d_k$ and by taking $\ee_2$ sufficiently small, it yields \eqref{eq proof:prop pour 3-system:eq3} with $\ee = 3\ee_1$. We now prove our claim.

\medskip

If $q$ is large enough, and since $\CL_1^*=-\CL_3+\GrO(1)$ and $-\CP_3(q) = \CL_{\byt_i}^*(q)+o(q)$, we deduce the existence of  a non-zero primitive point $\by\in\bZ^3$ such that $\CL_1^*(q) = \CL_{\by}^*(q) < \CL_{\byt_i}^*(q)$. It follows that $\by$ and $\byt_i$ are linearly independent (since they are both primitive), hence $\CL_2^*(q)\leq \CL_{\byt_i}^*(q)$. Combined with Mahler's duality and \eqref{eq proof:prop pour 3-system}, we deduce that $\CL_2(q) \geq \CP_3(q)+o(q)$. In view of \eqref{eq proof:prop pour 3-system:eq2}, we obtain $\CL_2(q)=\CP_2(q)+o(q)=\CP_3(q)+o(q)$, from which we infer
\begin{align*}
    \CL_1(q) = q-\CL_2(q)-\CL_3(q)+\GrO(1) \leq q - \CP_2(q) - \CP_3(q) - \ee_1 q + o(q) = \CP_1(q)- \ee_1 q + o(q).
\end{align*}
Consequently, if $q$ is large enough, then $\CL_1(q) < \min\{\CL_{\bzt_i}(q),\CL_{\bzt_{t_{k+1}}}(q)\} = \CP_1(q)+o(q)$. Since $\bzt_i$ and $\bzt_{t_{k+1}}$ are both primitive points, we obtain
\begin{align*}
    \CL_2(q) \leq \min\{\CL_{\bzt_i}(q),\CL_{\bzt_{t_{k+1}}}(q)\} = \CP_1(q)+o(q).
\end{align*}
Hence
$\CP_1(q)=\CP_2(q)+o(q) = \CP_3(q)+o(q)$. Fix $\ee_3 > 0$. Then, by the above and \eqref{eq: Minkowski pour P}, there exists $i_1$ such that if $i\geq i_1$, then
\begin{align*}
\label{eq proof:prop pour 3-system:eq4}
    \big| \CP_j(q) - \frac{q}{3}\big| \leq \ee_3 q \quad (j=1,2,3).
\end{align*}
By taking $\ee_3$ small enough, we deduce that $i=t_k$ and $q\in[(1-\ee_2)d_k,(1+\ee_2)d_k]$, for $\CP_1$ is increasing with slope $1$ on $[q_{t_k},d_k]$, and $\CP_3\geq (\CP_2+\CP_3)/2$, which is increasing with slope $1/2$ on $[d_k,q_{t_{k+1}}]$ (see Figures \ref{figure 3system_partiel1.png} and \ref{figure 3system_partiel2.png}). This ends the proof of our claim.
\end{proof}

As a consequence we get the following result.

\begin{theorem}
\label{reciproque Thm exposants nb quasi sturmien}
Let $\xi\in \Sturm(\us)$ and denote by $\sigma,\delta$ the parameters associated to $\xi$ as at the beginning of this section. For each $i=1,2,3$, we denote by $\pu_i,\po_i$ the parametric exponents associated to $\Xi=(1,\xi,\xi^2)$ as in Section~\ref{subsection geom param des nb}. Then, we have
\begin{align*}
&\pu_1 = \frac{\sigma}{(2-\delta)(1+\sigma)},\quad \po_1 = \frac{1}{(1-\delta)(1+\sigma)+2}, \quad \po_2 = \frac{1}{2+\sigma},\\
&\pu_3 = \frac{(1-\delta)(1+\sigma)}{1+2(1-\delta)(1+\sigma)},\quad
\frac{1-\delta}{2-\delta} \leq \po_3 \leq \max\Big(\frac{1}{2-\delta+\sigma},\frac{1-\delta}{2-\delta}\Big).
\end{align*}
If $\delta$ satisfies the stronger condition $\delta< h(\sigma)$ with
$h(\sigma) = \frac{\sigma}{2}+1-\sqrt{\big(\frac{\sigma}{2}\big)^2+1}$, then
\[
    \po_3 = \frac{1-\delta}{2-\delta}\AND \lambdaL(\xi) = \frac{(1-\delta)(1+\sigma)}{2+\sigma}.
\]
The left-hand side equality still holds if $\delta = h(\sigma)$.
\end{theorem}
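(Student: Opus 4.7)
The plan is to derive the stated values of the parametric exponents $\pu_i,\po_i$ from the explicit description of the $3$-system $\bP$ given in Definition~\ref{reciproque Def bP}, then translate to the classical exponents $\omega_2(\xi),\homega_2(\xi),\lambda_2(\xi),\hlambda_2(\xi)$ via Proposition~\ref{Prop dico exposants}. Proposition~\ref{Prop 3-system a o(q)} provides the key input: on $I=\bigcup_i I_i$ we have $\bL_\xi(q)=\bP(q)+o(q)$, and on $I'=\bigcup_i I'_i$ we have the weaker sandwich $\CP_2\le \CL_2+o(q)\le (\CP_2+\CP_3)/2\le \CL_3+o(q)\le \CP_3$, together with $\CL_1=\CP_1+o(q)$. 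Since the components of $\bP$ are piecewise linear with slopes in $\{0,1\}$ and the ratios $\CP_j(q)/q$ are monotone on each linear piece, their asymptotic extrema are reached at the vertices $q_i$ and $c_i$ of the combined graph. Throughout, we exploit Lemma~\ref{lem: lem delta_i = delta+o(1)} ($\delta_i\to\delta$) and the identity $\liminf_{k\to\infty} \log W_k/\log W_{k+1}=\sigma$ from \eqref{eq: sigma via W_k}.

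For the exponents that depend only on $\CL_1$ (namely $\pu_1,\po_1$) the analysis is clean since $\CL_1=\CP_1+o(q)$ on all of $[c_{i_0},+\infty)$. The function $\CP_1$ equals $\log W_k$ on a wide constant plateau, so $\CP_1/q$ is minimised at the right end of that plateau, namely at $q=q_{t_{k+1}-1}$ where $Y_{t_{k+1}-1}=W_{k+1}$ and $q_{t_{k+1}-1}=(2-\delta_{t_{k+1}-1})\log W_{k+1}$; taking the liminf yields $\pu_1=\sigma/((2-\delta)(1+\sigma))$. The limsup is attained at $q=q_{t_k}$ with the opposite normalisation, giving $\po_1=1/((1-\delta)(1+\sigma)+2)$. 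A similar inspection of the vertex values of $\CP_2$ and of $\CP_3$ on $I$ produces $\po_2=1/(2+\sigma)$ and $\pu_3=(1-\delta)(1+\sigma)/(1+2(1-\delta)(1+\sigma))$. Plugging these into Proposition~\ref{Prop dico exposants} gives the claimed expressions for $\omega_2$, $\homega_2$, $\lambda_2$ (note $\po_3$ enters here), and $\hlambda_2$; the Jarn\'ik identity \eqref{Eq Jarnik} is a consistency check.

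The delicate point is $\po_3$. On $I$ we compute $\CP_3(q_i)/q_i=(1-\delta_i)/(2-\delta_i)$, so by Proposition~\ref{Prop 3-system a o(q)}~(a) we obtain the lower bound $\po_3\ge (1-\delta)/(2-\delta)$. For the upper bound we only have $\CL_3\le \CP_3+o(q)$ on $I'$, so we must maximise $\CP_3/q$ on $I'$. On $[b_i,c_i]$ the function $\CP_3$ equals $\hCL_i$, rising with slope $1$ from $(1-\delta_i)\log Y_i$ to $\log Y_i=\hCL_i(c_i)$; then $\CP_3$ is constant at $\log Y_i$ on $[c_i,a_{i+1}]$. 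Hence on $I'_i$ the supremum of $\CP_3/q$ is $\log Y_i/c_i=1/(2-\delta_i+\log W_k/\log Y_i)$, which along the subsequence $i=t_{k+1}-1$ tends to $1/(2-\delta+\sigma)$. Combining the two bounds yields $(1-\delta)/(2-\delta)\le \po_3\le \max((1-\delta)/(2-\delta),\,1/(2-\delta+\sigma))$. A direct algebraic comparison shows that $(1-\delta)/(2-\delta)\ge 1/(2-\delta+\sigma)$ is equivalent to the quadratic inequality $\delta^2-(2+\sigma)\delta+\sigma\ge 0$, whose smaller root is exactly $h(\sigma)=\sigma/2+1-\sqrt{(\sigma/2)^2+1}$; thus when $\delta\le h(\sigma)$ the two bounds coincide and $\po_3=(1-\delta)/(2-\delta)$, whence the formula for $\lambda_2(\xi)$.

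Finally, for $\lambdaL(\xi)$ one uses the parametric description of this exponent from \cite[Prop.~3.6]{poels2019newExpo}, which expresses $\lambdaL(\xi)$ in terms of the asymptotic behaviour of $(\CL_1^*,\CL_2^*)$ subject to a constraint on $\CL_1^*$ that forces $q$ to lie near a critical value. Evaluating this at the transition points $c_{t_{k+1}-1}$ and invoking the midpoint $(\CP_2+\CP_3)/2$ in Proposition~\ref{Prop 3-system a o(q)}~(b), whose slope on the relevant linear piece is $1/2$, yields $\lambdaL(\xi)=(1-\delta)(1+\sigma)/(2+\sigma)$ when $\delta\le h(\sigma)$. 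The main obstacle throughout is the loss of control over $\CL_2$ and $\CL_3$ on $I'$: all the limsup computations for $\po_3$ and $\lambdaL$ hinge on the tight analysis of where on $I'_i$ the ratios $\CP_2/q$ and $\CP_3/q$ attain their maxima and on showing that the remaining freedom in the sandwich inequalities does not inflate these bounds, which is where the condition $\delta\le h(\sigma)$ is used to pin down equality.
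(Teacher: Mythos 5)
Your treatment of the parametric exponents $\pu_1,\po_1,\po_2,\pu_3,\po_3$ follows the same route as the paper: read off the extrema of $\CP_j(q)/q$ at the vertices $q_i$, $c_i$ of the combined graph of $\bP$, transfer them to $\bL_\xi$ via Proposition~\ref{Prop 3-system a o(q)}, and observe that $(1-\delta)/(2-\delta)\geq 1/(2-\delta+\sigma)$ is exactly the condition $\delta\leq h(\sigma)$. (One small point of care: for $\po_2$ and $\pu_3$ you cannot inspect $I$ alone, since on $I'$ only the sandwich $\CP_2\leq\CL_2+o(q)\leq(\CP_2+\CP_3)/2\leq\CL_3+o(q)\leq\CP_3$ is available; one must also check that $(\CP_2+\CP_3)/2q$ does not exceed $1/(2+\sigma)+o(1)$ on $I'$. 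This works out, and the paper likewise defers these computations to \cite[Theorem~7.2]{poels2017exponents}.)

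The genuine gap is in the computation of $\lambdaL(\xi)$. You invoke the parametric description of $\lambdaL$ from \cite[Proposition~3.6]{poels2019newExpo} and assert that ``evaluating at the transition points $c_{t_{k+1}-1}$ and invoking the midpoint $(\CP_2+\CP_3)/2$'' yields the value; but this is not an argument, and the paper explicitly states that it does \emph{not} use the parametric version of $\lambdaL$. The actual work required is the following. One fixes $\psi$ with $1/(2-\delta+\sigma)<\psi<(1-\delta)/(2-\delta)$ (this is where $\delta<h(\sigma)$ is needed \emph{strictly}, to make the interval nonempty) and shows that every primitive $\by\in\bZ^3$ with $\CL_{\by}^*(q(\by))\leq-\psi q(\by)$ must, for $\norm{\by}$ large, be one of the $\byt_i$: first $q(\by)\notin\bigcup I_i'$ because $-\CL_1^*\leq\CP_3+o(q)$ there and $\sup_{I_i'}\CP_3/q\leq 1/(2-\delta+\sigma)+o(1)<\psi$; then on $I_i$, a $\by$ independent of $\byt_i$ would force $\CL_2(q)\geq\psi q+o(q)>\po_2 q$, a contradiction. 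Only after this identification can one compute $\hlambda_\mu(\Xi)=\liminf_i(-\log\DD_i^*/\log Y_{i+1})$, check that the quotient is minimized at $\ell=0$, and obtain $(1-\delta)(1+\sigma)/(2+\sigma)$ via \eqref{eq: sigma via W_k} before letting $\mu\to\lambda_2(\xi)^-$. None of this appears in your sketch, and your ``slope $1/2$ of the midpoint'' remark does not substitute for it. Note also that you claim the $\lambdaL$ formula for $\delta\leq h(\sigma)$, whereas the theorem (and the argument just described) only gives it for $\delta<h(\sigma)$; at $\delta=h(\sigma)$ only the identity $\po_3=(1-\delta)/(2-\delta)$ survives.
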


\begin{remark}
    Theorem~\ref{reciproque Thm exposants nb quasi sturmien} shows that the parameters $\sigma$ and
    \begin{align}
        \label{eq def: parametre delta}
        \delta(\xi):=\delta
    \end{align}
    depend only on $\xi$.
\end{remark}

\begin{proof}[Proof of Theorem~\ref{reciproque Thm exposants nb quasi sturmien}]
Recall that $\bP=(\CP_1,\CP_2,\CP_3)$ is the function introduced in Definition~\ref{reciproque Def bP}. We define the parametric exponents associated to $\bP$ by
\begin{equation*}
    \thu_i :=  \liminf_{q\rightarrow\infty} \frac{\CP_i(q)}{q} \AND \tho_i:= \limsup_{q\rightarrow\infty} \frac{\CP_i(q)}{q} \quad (i=1,2,3).
\end{equation*}
They are computed in \cite{poels2017exponents} when $\us$ is bounded and $\delta < \sigma/(1+\sigma)$. The expressions for the exponents $\pu_i, \po_i$ are obtained by using Proposition~\ref{Prop 3-system a o(q)} and by arguing exactly as in the proof of \cite[Theorem~7.2]{poels2017exponents}, so we will skip most of the details. Note that if $\us$ is unbounded, then $\delta=0$ and $\bL_\xi(q)=\bP(q)+o(q)$. This yields $(\pu_i,\po_i)=(\thu_i,\tho_i)$ for $i=1,2,3$.

\medskip

In general, Proposition~\ref{Prop 3-system a o(q)} implies that $\CL_1(q)/q = \CP_1(q)/q+o(1)$, from which we deduce $(\pu_1,\po_1)=(\thu_1,\tho_1)$. As $i$ tends to infinity, we also have
\begin{align}
    \label{eq: inter 0 thm expo kappa}
     \sup_{q\in I_i'} \frac{\CP_3(q)}{q} = \frac{\CP_3(c_i)}{c_i} \leq \frac{1}{2-\delta+\sigma} + o(1)
     \AND \sup_{q\in I_i} \frac{\CP_3(q)}{q} = \frac{\CP_3(q_i)}{q_i}  = \frac{1-\delta}{2-\delta} + o(1),
\end{align}
and the upper bound for $\po_3$ follows easily, since $\CL_3(q)\leq \CP_3(q)+o(q)$. From now on we focus solely on the exponent $\lambdaL(\xi)$. Suppose that $\delta < h(\sigma)$, or equivalently that $1/(2-\delta+\sigma) < (1-\delta)/(2-\delta)$. Note that $\us$ is bounded, since otherwise $\delta = \sigma = 0$. Fix $\psi$ with
\begin{equation*}
    \frac{1}{2-\delta+\sigma} < \psi < \po_3 = \frac{1-\delta}{2-\delta}.
\end{equation*}
Let us prove that the exponent $\lambdaL(\xi)$ can be computed by using only the points $(\byt_i)_{i\geq 0}$. For each non-zero $\by\in\bZ^3$, we denote by $q(\by):=\log \norm{\by}-\log \norm{\by\wedge\Xi}$ the abscissa at which $\CL_{\by}^*$ changes slope. With this notation, we have $r_i:=q(\byt_i)=q_i+o(q_i)$. Eq.~\eqref{prop: trajectoire y_i, z_i et composantes de P} yields
\begin{equation*}
    \frac{\CL_{\byt_i}^*(r_i)}{r_i} = -\frac{1-\delta}{2-\delta} + o(1),
\end{equation*}
from which we deduce that $\CL_{\byt_i}^*(r_i)\leq -\psi r_i$ for each large enough $i$. Conversely, let $\by\in\bZ^3$ be a non-zero primitive point satisfying $\CL_{\by}^*(q)\leq -\psi q$, where $q := q(\by)$. Proposition~\ref{Prop 3-system a o(q)} gives $\CL_{\by}^*(q) \geq \CL_1^*(q) = -\CL_3(q)+\GrO(1) \geq -\CP_3(q)+o(q)$. Combined with the left-hand side of \eqref{eq: inter 0 thm expo kappa} we obtain $q\notin \bigcup_{i>0}I'_{i}$ if $\norm{\by}$ (and thus $q$) is large enough. Then, there is an index $i\geq 0$ such that $q\in I_i$. Suppose now that $\by$ and $\byt_i$ are linearly independent. Then, since $\CL_1^*(q)=\CL_{\byt_i}^*(q)+o(q)$, we get $\CL_2^*(q) \leq \CL_{\by}^*(q)+o(q)$, hence
\begin{align*}
    \CL_2(q) = -\CL_2^*(q)+\GrO(1) \geq -\CL_{\by}^*(q) + o(q) \geq \psi q + o(q).
\end{align*}
Yet, $\psi > 1/(2+\sigma) = \po_2$, so, if $q$ is large enough, $\by$ is proportional to $\byt_i$. Finally, note that $\CL_{\by}^*(q) \leq -\psi q$ is equivalent to $\norm{\by\wedge\Xi} \leq \norm{\by}^{-\mu}$, where $\mu = \psi/(1-\psi)$. By the above, the sequence of primitive points $\by\in\bZ^3$ such that $\norm{\by\wedge\Xi} \leq \norm{\by}^{-\mu} $ coincides, up to a finite numbers of terms, with the sequence $(\byt_i)_{i\geq 0}$. We deduce by a classical reasoning (see for example \cite[Section 2]{poels2019newExpo}) that
\[
    \hlambda_{\mu}(1,\xi,\xi^2) = \liminf_{i\rightarrow\infty} -\frac{\log \norm{\byt_i\wedge\Xi}}{\log \norm{\byt_{i+1}}} =
    \liminf_{i\rightarrow\infty} -\frac{\log \DD_i^*}{\log Y_{i+1}}.
\]
Let us write $i=t_k+\ell$, with $k\geq 0$ and $0\leq \ell < s_{k+1}$. Since the quotient
\begin{align*}
    -\frac{\log \DD_i^*}{\log Y_{i+1}} = \frac{(1-\delta)\big((\ell+1)\log W_k + \log W_{k-1}\big) }{(\ell+2)\log W_k + \log W_{k-1}}
\end{align*}
is increasing with $\ell$, it is minimum for $\ell = 0$, and we get by \eqref{eq: sigma via W_k}
\[
    \hlambda_{\mu}(1,\xi,\xi^2) = \liminf_{k\rightarrow\infty} \frac{(1-\delta)\big(\log W_k + \log W_{k-1}\big) }{2\log W_k + \log W_{k-1}}
    = \frac{(1-\delta)(1+\sigma)}{2+\sigma}.
\]
We deduce the value of $\lambdaL(\xi)$ from the above by noticing that $\mu = \psi/(1-\psi)$ tends to $\po_3/(1-\po_3) = \lambda_2(\xi)$ as $\psi$ tends to $\po_3$.
\end{proof}

\subsection{Approximation by algebraic numbers of degree at most $2$} ~ \medskip
\label{subsection: exposants omega^*}

We keep the notation of Section~\ref{subsection: construction 3-syst} for $\us$, $\psi=\psi_\us$, $\sigma:=\sigma(\us)$. In this section, we prove the following result.

\begin{proposition}
\label{prop: omega^* = omega}
Let $\xi\in\Sturm(\us)$. Then
\[
    \omega_2^*(\xi) = \omega_2(\xi) \AND \homega_2^*(\xi) = \homega_2(\xi).
\]
\end{proposition}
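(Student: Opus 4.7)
The plan is to establish the reverse inequalities $\omega_2^*(\xi)\geq \omega_2(\xi)$ and $\homega_2^*(\xi)\geq \homega_2(\xi)$, the forward ones being already provided by \eqref{eq: intro general pour exposants polynomials}. To do this I will use the quadratic algebraic numbers arising as the roots of the polynomials $P_i$ associated to the primitive points $\bzt_i$ furnished by Theorem~\ref{reciproque Prop construction 3-systeme : construction bwu et byu}.

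For each large $i$, identify $\bzt_i\in\bZ^3$ with a primitive polynomial $P_i\in\bZ[X]$ of degree at most $2$, so that $H(P_i)=\norm{\bzt_i}=Z_i^{1+o(1)}$ and $|P_i(\xi)|=|\bzt_i\cdot\Xi|=Y_i^{\delta+o(1)}/Y_{i+1}$. Up to a positive rational scalar, Proposition~\ref{Prop nouvelle expression y_i et z_i} gives $P_i=\UU(\bw_k^\ell\bw_{k-1})$ for $i=t_k+\ell$, so that the roots of $P_i$ are the fixed points of the homography attached to $\transpose{(\bw_k^\ell\bw_{k-1})}$. In particular, they are either rational or conjugate quadratic irrationals, and the root $\alpha_i$ of $P_i$ closest to $\xi$ is algebraic of degree at most $2$ with $H(\alpha_i)\ll H(P_i)$.

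The key analytic step is to prove the comparison
\[
    |\xi-\alpha_i|\asymp \frac{|P_i(\xi)|}{H(P_i)}
\]
as $i\to\infty$. Writing $P_i(X)=b_i(X-\alpha_i)(X-\beta_i)$, this reduces to showing that the second root $\beta_i$ remains bounded and stays at a positive distance from $\xi$, which in turn yields $|b_i|\asymp H(P_i)$, $|\xi-\beta_i|\asymp 1$, and $H(\alpha_i)\asymp H(P_i)$. This non-degeneracy is a structural feature of Sturmian sequences: by Proposition~\ref{Prop nouvelle expression y_i et z_i}, $\byt_i$ is proportional to $\UU(\bw_k)\wedge\UU(\bw_k^\ell\bw_{k-1})$, and the projective convergence $\byt_i\to\Xi$ forces the two fixed points of the homography attached to $\transpose{(\bw_k^\ell\bw_{k-1})}$ to be separated, with one attracting (close to $\xi$) and one repelling (bounded away from $\xi$). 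Combined with the growth estimates of Proposition~\ref{Annexe Prop existence delta général}, this also ensures that the entries of $\bw_k^\ell\bw_{k-1}$ share the same asymptotic order as its norm, so that the leading coefficient $b_i$ captures $H(P_i)$.

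Once the comparison is in place, the identity $-\log|\xi-\alpha_i|=-\log|P_i(\xi)|+\log H(P_i)+\GrO(1)$ converts the asymptotics of Theorem~\ref{reciproque Prop construction 3-systeme : construction bwu et byu} into lower bounds on $\omega_2^*(\xi)$ and $\homega_2^*(\xi)$: optimizing the ratio $-\log|\xi-\alpha_i|/\log H(\alpha_i)$ over the subsequence $i=t_k$ realizing $\sigma=\liminf p_k/p_{k+1}$ yields $\omega_2^*(\xi)\geq \omega_2(\xi)$, while using all sufficiently large $i$ yields $\homega_2^*(\xi)\geq \homega_2(\xi)$, with values matching those already computed in Theorem~\ref{reciproque Thm exposants nb quasi sturmien}. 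The main obstacle is the non-degeneracy control of $\beta_i$: a priori the discriminant of $P_i$ could shrink, and ruling this out rests in an essential way on the hyperbolic behavior of the Sturmian matrix products reflected in §\ref{section: estimates for Sturmian sequences}.
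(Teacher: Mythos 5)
Your overall architecture coincides with the paper's: the forward inequalities come from \eqref{eq: intro general pour exposants polynomials}, the candidate approximations are the roots of the polynomials attached to the points $\bzt_i$, the crux is the comparison $|\xi-\alpha_i|\asymp|P_i(\xi)|/H(P_i)$, and the final optimization over $i=t_k$ (for $\omega_2^*$) and over all large $i$ (for $\homega_2^*$) is exactly how the paper concludes. The gap is in your justification of the crux. You claim that the projective convergence $\byt_i\to\Xi$, together with the proportionality $\byt_i\propto\UU(\bw_k)\wedge\UU(\bw_k^\ell\bw_{k-1})$, \emph{forces} the two roots of $P_i$ to be separated. That implication is false. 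Identifying polynomials with points of $\bR^3$, one computes
\[
\bigl((X-\xi)(X-\eta)\bigr)\wedge\bigl((X-\xi)^2\bigr)=(\xi-\eta)\,(1,\xi,\xi^2),
\]
so the cross product of two polynomials vanishing (or nearly vanishing) at $\xi$ is automatically proportional (or nearly proportional) to $\Xi$, whether or not one of the factors has a double root near $\xi$. Hence the projective convergence of $(\byt_i)$ gives no control on the discriminant of $\bzt_i$, and the degenerate scenario you must exclude --- a near-double root at $\xi$, in which case $|\xi-\alpha_i|\asymp(|P_i(\xi)|/H(P_i))^{1/2}$ and the lower bounds collapse --- is not ruled out by your argument. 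Likewise, Proposition~\ref{Annexe Prop existence delta général} only controls norms and contents, not individual entries of $\bw_k^\ell\bw_{k-1}$, so it does not yield $|b_i|\asymp H(P_i)$ either.

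What closes this gap in the paper is Lemma~\ref{lem: lemme racines}: using the identity $\bb_{t_k+\ell}=\UU(\ba_{\psi(t_k+\ell)}N_k^{-1})$ and the projective convergence of $(\ba_i)$ itself (not merely of the wedge products), one shows that $(\bb_i)$ converges projectively, along even and odd $k$ separately, to the explicit limit polynomials $\UU(M_\xi N^{-1})$ and $\UU(M_\xi\transpose{N}^{-1})$; these are then checked to have all coefficients non-zero and positive discriminant, using that $N$ is invertible and neither symmetric nor antisymmetric (Lemma~\ref{lem: matrice N = J}) and that $\xi$ is not quadratic. This yields two limit roots $\xi'\neq\xi$ and $\xi''\neq\xi$, hence $|\xi-r_i'|\asymp 1$ and the comparability of all coefficients of $\bzt_i$ with $\norm{\bzt_i}$. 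That is the input your proof is missing; once it is supplied, the remainder of your argument goes through essentially as written.
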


By Theorem~\ref{reciproque Thm exposants nb quasi sturmien} and \eqref{Eq dico exposants} we get the following explicit formulas
\begin{equation}
    \label{eq: omega(Xi) et homega(Xi)}
    \omega_2(\xi) = \frac{2-\delta}{\sigma}+1-\delta\AND \homega_2(\xi) = (1-\delta)(1+\sigma)+1.
\end{equation}

Before to prove Proposition~\ref{prop: omega^* = omega}, let us explain our strategy. First, by \eqref{eq: intro general pour exposants polynomials} we have the general estimates, valid for any real number $\xi$ which is neither rational nor quadratic
\begin{equation}
    \label{eq: inter 1 calcul omega^*}
    \omega_2^*(\xi) \leq \omega_2(\xi) \AND \homega_2^*(\xi) \leq \homega_2(\xi).
\end{equation}
By definition of $\omega_2^*(\xi)$ and $\omega_2^*(\xi)$, the reverse inequalities of \eqref{eq: inter 1 calcul omega^*} hold if the best solutions $P\in\bZ^3 \cong \bR[X]_{\leq 2}$ of the problems defining $\omega_2(\xi)$ and $\homega_2(\xi)$ (see Section~\ref{section: notation}), have two real roots $\alpha$, $\alpha'$, with $|\alpha'-\xi| \asymp 1$. Indeed, in that case we have $|\alpha - \xi| \asymp |P(\xi)|/H(\alpha)$. Here, Proposition~\ref{Prop 3-system a o(q)} indicates that the relevant polynomials to be considered correspond to the points $\bzt_{t_k}$ of Section~\ref{subsection: construction 3-syst}.

\begin{lemma}
    \label{lem: lemme racines}
Let $(\bw_k)_{k\geq 0}$ be an admissible $\psi$-Sturmian sequence with multiplicative growth. We denote by $(\bb_i)_{i\geq -1}$ and $(\ba_i)_{i\geq -1}$ the sequences of symmetric matrices associated by Definition~\ref{def: def alternative y_i et z_i}. We suppose that $(\ba_i)_{i\geq -1}$ converges projectively to a symmetric matrix $M_\xi$ identified with $(1,\xi,\xi^2)$, where $\xi$ neither rational nor quadratic. Then, there exist two distinct non-zero real numbers $\xi',\xi''\in\bR\setminus\{\pm\xi\}$ with the following properties. The point $\bb_i$ converges projectively to $P_0=(X-\xi)(X-\xi')$ (resp. $P_1=(X-\xi)(X-\xi'')$) as $i=t_k+\ell$ tends to infinity with $k\geq 0$ even (resp. odd) and $0\leq \ell < s_{k+1}$.
\end{lemma}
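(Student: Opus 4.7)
I would prove the lemma in three steps.

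\textbf{First}, combining the identity $\ba_i = \by_i$ of Proposition \ref{Prop nouvelle expression y_i et z_i} with the key relation $\by_{\psi(t_{k+1})} = \bw_k N_{k+1}$ from \eqref{Eq y_psi(k) = w_k-1}, the hypothesis that $(\ba_i)$ converges projectively to $M_\xi = uu^T$, where $u := (1,\xi)^T$, forces $\bw_k$ to be asymptotically rank one: $\bw_k$ converges projectively to the rank-one matrix $u (N_{k+1}^{-T} u)^T$. Since $N_{k+1}$ equals $N^T$ or $N$ according to the parity of $k$, the row direction of this limit takes exactly two values, and the analogous statement holds for $\bw_{k-1}$ with $N_k$ in place of $N_{k+1}$.

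\textbf{Second}, I would use the elementary algebraic identity $(u r^T)^\ell(u s^T) = (r^T u)^\ell\, u s^T$, valid for any columns $u, s$ and row $r^T$, to deduce that $\bw_k^\ell\bw_{k-1}$ has the same projective limit as $\bw_{k-1}$ itself for every $0 \leq \ell < s_{k+1}$, provided the scalar $r^T u$ does not vanish. The multiplicative growth hypothesis enforces $r^T u \neq 0$: otherwise the rank-one part $(u r^T)^\ell$ would vanish for $\ell \geq 2$, and $\bw_k^\ell$ would grow strictly slower than $\norm{\bw_k}^\ell$, contradicting $\norm{\bw_k^{\ell+1}\bw_{k-1}} \asymp \norm{\bw_k}\norm{\bw_k^\ell \bw_{k-1}}$. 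Applying the morphism $U$ via the direct factorization $U(vw^T) = (v_1 X - v_2)(w_2 X + w_1)$ (a routine check) then yields
\[
    P_0 = c_0 (X-\xi)\big((N^{-T}u)_2 X + (N^{-T}u)_1\big) \quad \text{for $k$ even,}
\]
and an analogous expression with $N^{-1}$ in place of $N^{-T}$ for $k$ odd, both having $\xi$ as one root.

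\textbf{Third}, it remains to verify that the second roots $\xi', \xi''$ lie in $\bR \setminus \{0, \pm \xi\}$ and are distinct. Each excluded condition — $\xi' = \xi$, $\xi' = -\xi$, $\xi' = 0$, and $\xi' = \xi''$, plus the symmetric cases for $\xi''$ — translates, via the explicit formulas above, into a polynomial equation of degree at most two in $\xi$ whose coefficients are rational functions of the entries of $N$. In the intended application of the lemma the matrix $N$ lies in $\GL_2(\bQ)$ (via Theorem \ref{reciproque Prop construction 3-systeme : construction bwu et byu}); since $\xi$ is neither rational nor quadratic, each such equation forces every rational coefficient to vanish, which in turn forces $N$ to be symmetric (ruled out by Lemma \ref{lem: degenerate N symmetric}, as $(\bw_k)$ is admissible and $(\by_i)$ converges projectively), antisymmetric (ruled out by Lemma \ref{lem: matrice N = J}), non-invertible (contradicting admissibility), or to satisfy $u^T N^{-1} u = 0$ (already excluded by multiplicative growth). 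This third step is the main obstacle: each of the apparent ``accidental'' algebraic coincidences must be treated separately, using all the structural hypotheses on $N$ and on $\xi$.
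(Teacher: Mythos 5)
Your overall route is the same as the paper's: identify the projective limit of $\bb_{t_k+\ell}=\UU(\bw_k^{\ell}\bw_{k-1})$ as $\UU$ applied to a rank-one matrix built from $M_\xi$ and $N_k^{-1}$, factor the resulting quadratic as $(X-\xi)$ times a linear form, and rule out degeneracies of the second root using that $N$ is invertible and neither symmetric nor antisymmetric. Your factorization $\UU(v\,\transpose{w})=(v_1X-v_2)(w_2X+w_1)$ is correct and reproduces exactly the paper's $Q_0,Q_1$ and the roots $\xi'=-(a+c\xi)/(b+d\xi)$, $\xi''=-(a+b\xi)/(c+d\xi)$; your third step is essentially the paper's (and you are right that it tacitly needs $N$ defined over $\bQ$, which holds in every application).

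The genuine weak point is your second step. The paper never takes powers of an ``asymptotically rank one'' matrix: it uses the exact identity $\bw_k^{\ell}\bw_{k-1}=\ba_{\psi(t_k+\ell)}N_k^{-1}$ (from Definition~\ref{Def (y_i)} and \eqref{Eq y_psi(k) = w_k-1}), so the projective convergence of $\bb_{t_k+\ell}=\UU\big(\ba_{\psi(t_k+\ell)}N_k^{-1}\big)$ is inherited directly from that of $(\ba_i)_i$, uniformly in $\ell$. You instead pass to the projective limit of $\bw_k$ and then raise it to the power $\ell$. Since $\bw_k$ is only approximately rank one and $\ell$ ranges up to $s_{k+1}-1$, which is unbounded when $\us$ is unbounded --- a case the lemma must cover, as it is invoked in the proof of the equivalence of Definitions~\ref{Def fonction sturmienne, version polynomiale} and~\ref{def: Sturm(psi)} for arbitrary $\us$ --- the error $(u\transpose{r}+E_k)^{\ell}-(u\transpose{r})^{\ell}$ need not be negligible compared with $|\transpose{r}u|^{\ell}\,\norm{u\transpose{s}}$ without a quantitative bound of the type $\ell\,\norm{E_k}\to 0$, which you do not have. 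The same remark applies to your use of multiplicative growth to force $\transpose{r}u\neq 0$: as stated it is a heuristic, whereas the paper gets $a+(b+c)\xi+d\xi^2\neq0$ from the non-(anti)symmetry of $N$ (Lemmas~\ref{lem: degenerate N symmetric} and~\ref{lem: matrice N = J}) together with $\xi$ being neither rational nor quadratic. Both defects are repairable --- indeed the exact identity you already quote in your first step makes your second step unnecessary --- but as written the proposal does not establish the lemma when $\us$ is unbounded.
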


\begin{proof}
    We write $\Xi=(1,\xi,\xi^2)$ and denote by $N$ the matrix $(\Id-\bw_1^{-1}\bw_0^{-1}\bw_1\bw_0)J$. Proposition~\ref{Prop nouvelle expression y_i et z_i} gives, for any $k,\ell$ with $k\geq 1$ and $0\leq \ell < s_{k+1}$,
    \begin{equation*}
        \bb_{t_k+\ell}  =  U(\bw_k^\ell\bw_{k-1}) = U(\ba_{\psi(t_k+\ell)}N_k^{-1})
    \end{equation*}
    (see \eqref{Eq y_psi(k) = w_k-1} for the case $\ell=0$). We deduce that $\bb_{t_k+\ell}$ converges projectively to $Q_0:=U(M_\xi N^{-1})$ (resp. $Q_1:=U(M_\xi \transpose{N}^{-1})$) as $i=t_k+\ell$ tends to infinity with $k$ even (resp. $k$ odd) and $0\leq \ell < s_{k+1}$. Now, let us identify $Q_0,Q_1$ and $\bb_i$ with their corresponding polynomial of degree $\leq 2$ as in Section~\ref{section: notation}. Explicitly, we have the formulas
    \begin{align*}
        \left\{
            \begin{array}{ll}
                Q_0 &= -\xi(a+c\xi) + (a+(c-b)\xi-d\xi^2)X +(b+d\xi)X^2\\
                Q_1 &= -\xi(a+b\xi) + (a+(b-c)\xi-d\xi^2)X +(c+d\xi)X^2
            \end{array}
        \right.,
        \quad \textrm{where } N^{-1} = \left(\begin{array}{cc} a & b\\ c & d \end{array}\right).
    \end{align*}
    Recall that $N$ is inversible, neither symmetric nor antisymmetric by Lemma~\ref{lem: matrice N = J}, and $\xi$ is not the root of a polynomial in $\bZ[X]$ of degree $2$. This implies that none of the coefficients of $Q_0$ and $Q_1$ is zero. By the above, the discriminant $\Delta$ of $Q_0$ and $Q_1$ is equal to $\DD = (a+(b+c)\xi+d\xi^2)^2 > 0$. The two distincts roots of $Q_0$ (resp. $Q_1$) are $\xi$ and
    \[
        \xi':=-\frac{a+c\xi}{b+d\xi}\quad\Big(\textrm{resp. } \xi'':=-\frac{a+b\xi}{c+d\xi} \Big).
    \]
    and $\xi'\neq \xi''$ (since $N$ is neither symmetric nor antisymmetric) and $\xi',\xi''\notin \{0,-\xi\}$ since $Q_0$ and $Q_1$ have non-zero coefficients.
\end{proof}

\begin{proof}[Proof of Proposition~\ref{prop: omega^* = omega}]
We write $\Xi=(1,\xi,\xi^2)$ and we keep the notation of Section~\ref{subsection: construction 3-syst} for the $\psi$-Sturmian sequence $(\bw_k)_{k\geq 0}$, the parameter $\delta\in[0,\sigma/(1+\sigma)$ and the sequence of primitive integer points $(\bzt_i)_{i\geq -1}$ (also viewed as a sequence of polynomials) associated with $(\bb_i)_{i\geq -1}$ as in Theorem~\ref{reciproque Prop construction 3-systeme : construction bwu et byu}. Let $P_0,P_1$ and $\xi',\xi''$ be as in Lemma~\ref{lem: lemme racines}. Since $P_0$ and $P_1$ have non-zero coefficients and positive discriminant, each of the coordinates of $\bzt_i$ is $\asymp \norm{\bzt_i}$, and the discriminant of $\bzt_i$ is positive for large enough $i$. For those $i$, we denote by $r_i$ and $r_i'$ the two real roots of $\bzt_i$, where $r_i$ is the closest one to $\xi$. The other root $r_i'$ converges to either $\xi'$ (resp. $\xi''$) as $i=t_k+\ell$ tends to infinity with $k$ even (resp. odd) and $0\leq \ell < s_{k+1}$. Since $\xi',\xi''\neq \xi$, we have $|\xi-r_i'| \asymp 1$. Also note that the minimal polynomial of $r_i$ divide $\bzt_i$, and thus $H(r_i)\ll \norm{\bzt_i}$. This yields
\begin{equation}
    \label{eq proof: racine r_i}
    H(r_i) \ll \norm{\bzt_i} \AND |\xi-r_i| \asymp \frac{|\bzt_i\cdot \Xi|}{\norm{\bzt_i}} \ll \frac{|\bzt_i\cdot \Xi|}{H(r_i)}.
\end{equation}
Note that $H(r_i)$ tends to infinity since $r_i$ converges to $\xi$ which is neither rational nor quadratic. For each $k\geq 1$ let $\ee_k\geq 1$ be such that $W_k = W_{k-1}^{\ee_k}$, where $W_k$ is defined as in Section~\ref{subsection: construction 3-syst}, and consider $i:=t_k$. Then $H(r_{t_k})\leq W_{k-1}^{1+o(1)}$ and \eqref{prop: trajectoire y_i, z_i et composantes de P} leads us to
\begin{align*}
    |\bzt_{t_k}\cdot \Xi| = W_k^{-(2-\delta+o(1))}W_{k-1}^{-(1-\delta)} = W_{k-1}^{-((2-\delta+o(1))\ee_k+1-\delta)}
    & \leq H(r_{t_k})^{-((2-\delta+o(1))\ee_k+1-\delta)}.
\end{align*}
Since $\limsup_{k\rightarrow\infty}\ee_k = 1/\sigma$ by \eqref{eq: sigma via W_k}, the above combined with \eqref{eq proof: racine r_i} and \eqref{eq: omega(Xi) et homega(Xi)} yields
\[
    \omega_2^*(\xi) \geq \limsup_{k\rightarrow\infty}\big( (2-\delta+o(1))\ee_k+1-\delta \big)= \frac{2-\delta}{\sigma} +1-\delta = \omega_2(\xi).
\]
Similarly, let $X\geq 1$ be a large real number, and let $k$ be such that $H(r_{t_k}) \leq X < \max\{W_{k},H(r_{t_{k+1}})\} = W_k^{1+o(1)}$. Then, we find
\begin{align*}
    |\bzt_{t_k}\cdot \Xi| = W_k^{-(2-\delta+o(1))}W_{k-1}^{-(1-\delta)}  = W_{k}^{-(2-\delta+o(1)+\ee_k^{-1}(1-\delta))}
    \leq X^{-(2-\delta+o(1)+\ee_k^{-1}(1-\delta))}.
\end{align*}
Combining once again this result with \eqref{eq proof: racine r_i} and \eqref{eq: omega(Xi) et homega(Xi)}, we obtain
\[
    \homega_2^*(\xi) \geq \liminf_{k\rightarrow\infty} \big( 2-\delta+o(1)+\ee_k^{-1}(1-\delta)\big) = 2-\delta +(1-\delta)\sigma = \homega_2(\xi).
\]
\end{proof}

\subsection{Proofs}~\medskip
\label{subsection: proofs des thm}

\begin{proof}[Proof of Theorem~\ref{reciproque Thm 3-systeme nombre quasi sturmien intro}]
As seen in the introduction, $\Sturm(\us)$ is as most countable, and the density of $\Delta(\us)$ in $[0,\sigma/(1+\sigma)]$ when $\sigma > 0$ (or equivalently $\us$ bounded) comes from the construction of $\psi$-Sturmian numbers (see \cite[Section 9]{poels2017exponents}). The part concerning the exponents is a direct consequence theorem~\ref{reciproque Thm exposants nb quasi sturmien} combined with \eqref{Eq dico exposants} and Proposition~\ref{prop: omega^* = omega}. Since Bugeaud-Laurent continued fraction $\xi_\phi\in \Sturm(\us)$ with $\delta(\xi_\phi)=0$ we have $0\in\Delta_\us$.
\end{proof}

Corollary~\ref{cor: spec homega_2^* dense II} is a consequence of the following result. As defined in the introduction, $\usFibo=(s_k)_{k\geq 1}$ is the constant sequence $s_k=1$ for each $k\geq 1$.

\begin{lemma}
     There exists $\ee > 0$ with the following property. For each $\xi\in\bR$ which is neither rational nor quadratic, if $\homega_2(\xi)\geq \gamma^2-\ee$, then $\xi\in \Sturm(\usFibo)$.
\end{lemma}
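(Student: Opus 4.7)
The plan is to apply Fischler's Theorem~\ref{Rappels Thm 4.1 SF VO} and then identify the bounded integer sequence $\us$ it produces as eventually equal to $\usFibo$. First, by Jarnik's identity~\eqref{Eq Jarnik}, the hypothesis $\homega_2(\xi)\geq \gamma^2-\ee$ gives $\hlambda_2(\xi)\geq 1-1/(\gamma^2-\ee)\to 1/\gamma$ as $\ee\to 0^+$; in particular $\hlambda_2(\xi)>1/2$ for $\ee$ small enough, so $\lambda_2(\xi)\geq 1$ and $\beta_0(\xi)<\infty$.

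The hard step is to establish $\beta_0(\xi)<\sqrt 3$ for $\ee$ small. The lower bound $\beta_0(\xi)\geq 1/\hlambda_2(\xi)\geq \gamma$ comes for free from the definition of $\beta_0$; the issue is the matching upper bound. I would exploit the rigidity of parametric geometry of numbers at the extremal value $\hlambda_2=1/\gamma$: when $\hlambda_2(\xi)$ is close to $1/\gamma$, the Schmidt-Summerer map $\bL^*_\xi$ is forced, up to $o(q)$, to follow the Fibonacci profile of a $3$-system, which constrains $\lambdaL(\xi)$ to lie close to $\hlambda_2(\xi)$. Since $\beta_0<2$ implies $\beta_0=1/\lambdaL$ (by \cite[Lemma~1.3]{poels2019newExpo}), this forces $\beta_0(\xi)$ close to $\gamma$, in particular $\beta_0(\xi)<\sqrt 3$. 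The main obstacle is making this compactness/rigidity argument quantitative. Once $\beta_0(\xi)<\sqrt 3$, Theorem~\ref{Rappels Thm 4.1 SF VO} furnishes a bounded integer sequence $\us$ and a sequence $(\bv_i)$ of primitive integer points such that $\bv_{i+1}$ is proportional to $\bv_i\Adj(\bv_{\psi_\us(i)})\bv_i$ for large $i$ and $\norm{\bv_i\wedge\Xi}=\norm{\bv_i}^{-1+o(1)}$. The classical bound $|\det(\bv_i)|\ll \norm{\bv_i}\cdot \norm{\bv_i\wedge\Xi}$ then gives $|\det(\bv_i)|=\norm{\bv_i}^{o(1)}$, showing that condition~(iii) of Definition~\ref{def: Sturm(psi)} holds a fortiori, and placing $\xi$ in $\Sturm(\us)$ with $\delta(\xi)=0$.

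It remains to replace $\us$ by $\usFibo$. By Theorem~\ref{reciproque Thm exposants nb quasi sturmien}, $\homega_2(\xi)=2+\sigma(\us)$, so $\sigma(\us)\geq \gamma^2-2-\ee=1/\gamma-\ee$. Taking $\ee<1/\gamma-1/2$ yields $\sigma(\us)>1/2$; were infinitely many indices to satisfy $s_{k+1}\geq 2$, then $[s_{k+1};s_k,\dots,s_1]\geq 2$ along that subsequence, whence $\sigma(\us)\leq 1/2$, contradiction. Therefore $s_k=1$ for all $k$ large, so $\psi_\us(i)=i-2=\psi_{\usFibo}(i)$ for all $i$ large. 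Since Definition~\ref{def: Sturm(psi)} only involves the tail behavior of $\psi$, this implies $\Sturm(\us)=\Sturm(\usFibo)$, concluding that $\xi\in\Sturm(\usFibo)$ as required.
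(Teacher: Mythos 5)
There is a genuine gap at the central step of your argument. Everything hinges on showing that $\homega_2(\xi)\geq\gamma^2-\ee$ (equivalently, via Jarn\'ik, $\hlambda_2(\xi)\geq 1/\gamma-\ee'$) forces $\beta_0(\xi)<\sqrt 3$, and you explicitly defer this to an unquantified ``rigidity of parametric geometry of numbers'' argument. This is not a technicality one can wave through: a lower bound on $\hlambda_2(\xi)$ gives no lower bound on $\hlambda_\mu(\Xi)$ for $\mu$ near $1$ (nor on $\lambdaL(\xi)$), since the map $\mu\mapsto\hlambda_\mu(\Xi)$ is only known to be non-increasing, and the inequality $\lambdaL(\xi)\leq\hlambda_2(\xi)$ goes the wrong way. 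The quantitative rigidity you are invoking --- that $\hlambda_2(\xi)$ near its maximum $1/\gamma$ forces the best approximations to organize themselves along a Fibonacci-type recurrence --- is precisely the deep input of the paper's proof, namely Theorem~7.3 of \cite{poelsroy2019} (going back to unpublished work of Fischler). That theorem produces directly, for each $0<\eta<1$ and for $\hlambda_2(\xi)\geq 1/\gamma-\ee'$, a sequence of primitive points with $\bv_{i+1}$ proportional to $\bv_i\Adj(\bv_{i-2})\bv_i$ for large $i$ and $\norm{\bv_i\wedge\Xi}\ll\norm{\bv_i}^{-1+\eta}$; choosing $\eta\leq 1/(\gamma+1)$ and using $|\det(\bv_i)|\ll\norm{\bv_i}\,\norm{\bv_i\wedge\Xi}$ then yields condition~(iii) of Definition~\ref{def: Sturm(psi)} with $\psi=\psi_{\usFibo}$ already in hand. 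Your route through $\beta_0$ and Theorem~\ref{Rappels Thm 4.1 SF VO} would, even if the missing step were supplied, only reprove a weaker form of this input.

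The remainder of your argument is sound: the determinant estimate giving $\delta(\xi)=0$, and the identification of the bounded sequence $\us$ with $\usFibo$ via $\sigma(\us)>1/2$ forcing $s_k=1$ eventually (hence $\psi_\us(i)=i-2$ for large $i$), are both correct. But since Definition~\ref{def: Sturm(psi)} already pins down $\psi$, the paper never needs this identification step; the entire difficulty of the lemma is concentrated in the step you left open.
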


\begin{proof}
The set of points $(1,\eta,\eta^2)$ with $\eta\in\bR$ corresponds to a quadratic hypersurface associated to the quadratic form $x_0x_2-x_1^2$. As a consequence of \cite{poelsroy2019} (see \cite[Theorem 7.3]{poelsroy2019}), for each $\eta$ with $0 < \eta < 1$, there is $\ee' > 0$ with the following property. Let $\xi\in\bR$ (which is neither rational nor quadratic) with $\hlambda_2(\xi)\geq 1/\gamma - \ee'$, and write $\Xi=(1,\xi,\xi^2)$. Then, there exists a sequence of primitive points $(\bv_i)_{i\geq 0}$ such that
\begin{enumerate}[label=\rm(\roman*)]
    \item The sequence $(\norm{\bv_i})_{i\geq 0}$ tends to infinity.
    \smallskip
    \item The matrix $\bv_{i+1}$ is  proportional to $\bv_i\Adj(\bv_{i-2})\bv_i$ for each large enough $i$.
    \smallskip
    \item \label{item: item proof cor 1} We have $\norm{\bv_i\wedge\Xi} \ll \norm{\bv_i}^{-1+\eta}$.
\end{enumerate}
The above phenomenon was first observed by Fischler in an unpublished work. Using the classical estimate $|\det(\bv_i)| \ll \norm{\bv_i}\norm{\bv_i\wedge\Xi}$, we deduce from \ref{item: item proof cor 1} that $\limsup_{i\rightarrow \infty} \log|\det(\bv_i)| / \log \norm{\bv_i}$ is at most $\eta$. So, if we choose $\eta \leq \sigma/(1+\sigma)$ (where $\sigma=\sigma(\usFibo)=1/\gamma$), we find $\xi\in\Sturm(\usFibo)$. By Jarn\'ik's identity \eqref{Eq Jarnik}, there exists $\ee > 0$ such that $\homega_2(\xi) \geq \gamma^2-\ee$ implies $\hlambda_2(\xi) \geq 1/\gamma -\ee'$.
\end{proof}

\begin{proof}[Proof of Theorem~\ref{Thm beta_0 < sqrt}]
Let $\xi\in\bR$ which is neither rational nor quadratic with $\beta_0(\xi) < \sqrt 3$. Let $(\bv_i)_{i\geq 0}$ and $\us$ be the sequences given by Theorem~\ref{Thm beta_0 < sqrt}. Since $\us$ is bounded, we have $\sigma=\sigma(\us) > 0$. The first two conditions of Definition~\ref{def: Sturm(psi)} are satisfied. The last one comes from the estimates $|\det(\by_i)| \ll \norm{\bv_i}\norm{\bv_i\wedge \Xi} =  \norm{\bv_i}^{o(1)}$ as $i$ tends to infinity, where $\Xi=(1,\xi,\xi^2)$. We therefore have $\xi\in\Sturm(\us)$ and by the above, the parameter $\delta(\xi)$ (see \eqref{eq def: parametre delta}) is equal to $0$, and thus $ < h(\sigma)$. Theorem~\ref{reciproque Thm 3-systeme nombre quasi sturmien intro} yields $\hlambda_2(\xi)=\lambdaL(\xi)$. We conclude by recalling that $\beta_0(\xi) < 2$ implies that $\lambdaL(\xi)=1/\beta_0(\xi)$ (see Section~\ref{section: notation}).
\end{proof}

\begin{proof}[Proof of Definition~\ref{Def fonction sturmienne, version polynomiale} $\Leftrightarrow$ Definition~\ref{def: Sturm(psi)}] The implication $\Leftarrow$ follows from Theorem~\ref{reciproque Prop construction 3-systeme : construction bwu et byu} and the estimates \eqref{prop: trajectoire y_i, z_i et composantes de P}.\\
$\Rightarrow$ Let $\xi$ and $(\bw_k)_{k\geq 0}$ be as in Definition~\ref{Def fonction sturmienne, version polynomiale}. The first two conditions ensure that $(\bw_k)_{k\geq 0}$ is admissible and has multiplicative growth. Note that $(\bwt_k)_{k\geq 0}$ (and thus $(\bw_k)_{k\geq 0}$) is unbounded since $P_k(\xi)$ tends to $0$ and $\xi$ is neither rational nor quadratic. By Proposition~\ref{Annexe Prop existence delta général}, there are $\alpha,\beta,\rho\geq 0$, with $\beta > 0$ and $\alpha\leq 2\beta$, such that $\norm{\bw_k} \asymp e^{\beta p_k}$, $|\det(\bw_k)|\asymp e^{\alpha p_k}$ and $c_k = e^{ p_k(\rho+o(1))}$. The condition \ref{item: Def sturm poly condition 4} can be rewritten as $(\alpha-2\rho)/(\beta-\rho) \leq \sigma/(1+\sigma)$. In particular, we must have $\alpha/\beta < 2$, and therefore Proposition~\ref{prop: estimations pour suites y_i et z_i} applies. We obtain that the sequence of symmetric matrices $(\by_i)_{i\geq -2}$ associated to $(\bwt_k)_{k\geq 0}$ converges projectively to a point $(1,\eta,\eta^2)$, which, by Lemma~\ref{lem: lemme racines} combined with condition \ref{item: Def sturm poly condition 3} of Definition~\ref{Def fonction sturmienne, version polynomiale}, is equal to $(1,\xi,\xi^2)$. Then, the sequence of primitive integer points $(\cont(\by_i)^{-1}\by_i)_{i\geq 0}$ is as in Definition~\ref{def: Sturm(psi)}.
\end{proof}

\Ack

\bibliographystyle{abbrv}

\end{document}